\def\multiset#1#2{\ensuremath{\left(\kern-.3em\left(\genfrac{}{}{0pt}{}{#1}{#2}\right)\kern-.3em\right)}}
\newcommand{\bbf}{\mathbb{F}}
\newcommand{\bbz}{\mathbb{Z}}
\newcommand{\bbr}{\mathbb{R}}
\newcommand{\bbc}{\mathbb{C}}
\newcommand{\A}{\mathfrak{A}}
\newcommand{\rk}{\mathrm{rk}}
\newcommand*{\bbe}{\mathbb{E}}
\newcommand{\abs}[1]{\left\lvert #1\right\rvert}
\newcommand{\brac}[1]{\left( #1\right)}
\newcommand{\norm}[1]{\left\lVert #1\right\rVert}
\newtheorem{theorem}{Theorem}[section]
\newtheorem{lemma}{Lemma}[section]
\newtheorem{corollary}{Corollary}[section]
\begin{document}
\title[A quantitative improvement for Roth's theorem]{A quantitative improvement for Roth's theorem on arithmetic progressions}
\author{Thomas F. Bloom}
\address{Thomas Bloom\\Department of Mathematics\\University of Bristol\\
University Walk\\Clifton\\ Bristol BS8 1TW\\United Kingdom}
\email{matfb@bristol.ac.uk}
\thanks{The author was supported by an EPSRC doctoral training grant.}
\begin{abstract}
We improve the quantitative estimate for Roth's theorem on three-term arithmetic progressions, showing that if $A\subset\{1,\ldots,N\}$ contains no non-trivial three-term arithmetic progressions then $\abs{A}\ll N(\log\log N)^4/\log N$. By the same method we also improve the bounds in the analogous problem over $\bbf_q[t]$ and for the problem of finding long arithmetic progressions in a sumset.
\end{abstract}	
\maketitle

\section{Introduction}
In this paper we prove the following quantitative improvement for Roth's theorem on arithmetic progressions.
\begin{theorem}\label{first}
If $A\subset\{1,\ldots,N\}$ contains no non-trivial three-term arithmetic progressions then
\[\abs{A}\ll \frac{(\log\log N)^4}{\log N}N.\]
\end{theorem}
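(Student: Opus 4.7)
The plan is to follow the density-increment strategy pioneered by Roth and sharpened successively by Heath-Brown, Szemer\'edi, Bourgain, and Sanders. After transferring to $\bbz/N\bbz$, if $A$ has density $\alpha$ and contains no non-trivial three-term progressions then there must exist a long arithmetic progression (or, more conveniently, a regular Bohr set) $P$ of length $N'$ on which the density of $A$ exceeds $\alpha+f(\alpha)$ for some explicit increment function $f$. Iterating produces a chain of nested objects, and since density is bounded by $1$ the process must terminate; tracking how $N'$ shrinks at each step yields the final upper bound on $\alpha$ in terms of $N$. To reach $(\log\log N)^4/\log N$ one needs both an increment of the form $f(\alpha)\gg \alpha/\log^{O(1)}(1/\alpha)$ with the right exponent, and a modest length-loss at each step.

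The Fourier input is the standard identity relating the count of three-term progressions in $A$ to $\sum_{r}\widehat{1_A}(r)^2\widehat{1_A}(-2r)$. The absence of non-trivial progressions, combined with a trivial lower bound on the main ($r=0$) term, forces substantial mass to lie in the large spectrum $\Delta_\rho=\{r:\abs{\widehat{1_A}(r)}\geq \rho \alpha N\}$ at threshold $\rho\asymp\alpha$. The crucial step is to convert this spectral information into a density bias on a Bohr set. Bourgain's original Bogolyubov-type argument exploited a single large Fourier coefficient and produced a Bohr set of rank $d\asymp \alpha^{-2}$; Sanders upgraded this, via Chang's theorem and Croot--Sisask almost-periodicity, to $d\asymp \log^{O(1)}(1/\alpha)$. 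For the present improvement I would combine Croot--\L aba--Sisask almost-periodicity with a careful $L^p$ estimate on $\Delta_\rho$, in the spirit of Schoen--Shkredov and Croot--Sisask, to extract a Bohr set $B$ of rank $d\asymp \log(1/\alpha)$ and radius $\rho\asymp 1/\log^{O(1)}(1/\alpha)$ on which the convolution $1_A * \mu_B$ exhibits an $L^\infty$ bias $\gg \alpha$.

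With this density increment inside Bohr sets in hand, I would iterate while maintaining a Bourgain-style regularity condition on the current Bohr set, so that density-biases can be localized to sub-Bohr sets without cumulative length-loss. Only at the very end does one descend from the final Bohr set of rank $d$ and radius $\rho$ to a genuine arithmetic progression, at a one-time cost of a factor of roughly $(\rho/d)^d$ in the length; balancing this against the Bohr-set iteration parameters is essential for absorbing everything into the claimed bound.

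The main obstacle throughout is quantitative: each ingredient -- the Chang--Croot--Sisask analysis of $\Delta_\rho$, the Bogolyubov step producing the Bohr set, the regularity lemma, and the final passage to a progression -- contributes logarithmic factors, and arranging that the exponents combine to give $(\log\log N)^4$ rather than a larger power is delicate. The hardest piece is the spectral step: shaving a single $\log(1/\alpha)$ from the rank of the Bohr set translates directly into one fewer $\log\log N$ in the final exponent, so the passage from $\Delta_\rho$ to Bohr structure must be carried out as efficiently as currently possible, with all gains tracked simultaneously rather than in series.
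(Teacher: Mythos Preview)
Your outline follows the standard density-increment framework correctly, but it misidentifies the engine that drives the quantitative improvement, and this is a genuine gap rather than a stylistic difference.

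You propose to obtain the density increment via Croot--Sisask almost-periodicity combined with $L^p$ spectral estimates, in the manner of Sanders. That route, carried out carefully, yields $(\log\log N)^6/\log N$; the paper explicitly notes that Sanders' claimed exponent~$5$ was a miscount and the method actually gives~$6$. Your sketch offers no new mechanism for beating this: the assertion that one can extract a Bohr set of rank $d\asymp\log(1/\alpha)$ is unsupported, and in fact no almost-periodicity argument currently produces such a rank. The density-increment step in both Sanders' work and the present paper incurs a rank growth of order $\alpha^{-1}$ times logarithmic factors, not $\log(1/\alpha)$ alone; if your claimed rank were attainable the final bound would be far stronger than $(\log\log N)^4/\log N$.

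The paper's actual improvement is obtained by an entirely different, purely Fourier-analytic, device and deliberately \emph{avoids} Croot--Sisask almost-periodicity. The new ingredient is a structural lemma for large spectra (a sharpening of Chang's lemma): rather than covering all of $\Delta_\eta(A)$ by a cube of dimension $O(\eta^{-2}\log(1/\alpha))$, one shows that a positive-proportion subset $\Delta'\subset\Delta_\eta(A)$ with $\abs{\Delta'}\gg\eta\abs{\Delta_\eta(A)}$ is covered by a cube of dimension only $O(\eta^{-1}\log(1/\alpha))$. The proof goes through lower bounds on the $2m$-fold additive energy of the spectrum (after Shkredov) and a random-sampling argument in the style of Bateman--Katz relating low energy to large dissociated subsets. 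This saving of one factor of $\eta\asymp\alpha$ in the covering dimension is exactly what drops the exponent from~$6$ to~$4$ when the losses are tracked through the Bohr-set iteration. Your proposal contains no analogue of this step, and without it the arithmetic of logarithmic factors does not close.
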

This problem has a long history, the first significant quantitative bound being given by \cite{Ro:1953}. Let $R(N)$ denote the size of the largest subset of $\{1,\ldots,N\}$ which contains no non-trivial three-term arithmetic progressions. For comparison the table below summarises the history of upper bounds for $R(N)$.
\\
\\
\begin{tabular}[c]{|c|c|}
\hline
\cite{Ro:1953} & $N/\log\log N$\\
\hline
\cite{Sz:1990} and \cite{He:1987} & $N/(\log N)^c$ for some $c>0$\\
\hline
\cite{Bo:1999} & $(\log\log N)^{1/2}N/(\log N)^{1/2}$\\
\hline
\cite{Bo:2008} & $(\log\log N)^2N/(\log N)^{2/3}$\\
\hline
\cite{Sa:2012} & $N/(\log N)^{3/4-o(1)}$\\
\hline
\cite{Sa:2011} & $(\log\log N)^6N/\log N$\\
\hline
\end{tabular}
\\
\\
The claimed bound of $(\log\log N)^5N/\log N$ in \cite{Sa:2011} is due to a calculation error, and the method there in fact delivers $(\log\log N)^6N/\log N$ as in the table above. The best known lower bound has the shape $R(N)\gg N\exp(-c\sqrt{\log N})$ for some absolute constant $c>0$, due to \cite{Be:1946}, and so Theorem~\ref{first} still leaves much to be desired.

Not only does the method of this paper deliver a quantitative improvement but it manages to do so without the powerful combinatorial tools used in \cite{Sa:2011}, and instead operates almost entirely in `frequency space', exploiting a new lemma concerning structural properties of the large Fourier spectrum inspired by a recent breakthrough in Roth's theorem in $\bbf_p^n$ by \cite{BaKa:2012}. In particular, we hope that the methods in this paper could be used along with more combinatorial techniques to yield further quantitative progress.

The significant new ingredient in the proof of Theorem~\ref{first} is a new lemma about the structural properties of the set of large Fourier coefficients of a given set, which under certain circumstances offers a quantitative improvement over a related well-known lemma of \cite{Ch:2002}. As a further demonstration of the utility of this lemma we outline how it can be combined with the technique of \cite{Sa:2008} to prove the following quantitative improvement to the problem of finding long arithmetic progressions in a sumset.
\begin{theorem}\label{aps}
If $A,B\subset \{1,\ldots,N\}$ with both $c_1\alpha N\leq \abs{A} \leq \abs{B}\leq c_2\alpha N$ then $A+B$ contains an arithmetic progression of length at least
\[\exp\brac{cf(\alpha)\sqrt{\log N}},\]
where $f(\alpha)=\alpha^{1/2}/\log(1/\alpha)$ and the constant $c>0$ depends only on $c_1$ and $c_2$. 
\end{theorem}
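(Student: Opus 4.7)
I would adapt the Fourier-analytic strategy of Sanders~\cite{Sa:2008} for locating long arithmetic progressions in a sumset, substituting the new spectrum lemma of this paper for the application of Chang's theorem in that argument.

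First I would transfer to the cyclic group $\bbz/N'\bbz$ with $N'\asymp N$ by the usual embedding, so that $A$ and $B$ retain density proportional to $\alpha$. Fourier inversion then gives
\[1_A\ast 1_B(x)=\frac{\abs{A}\abs{B}}{N'}+\sum_{\xi\neq 0}\widehat{1_A}(\xi)\overline{\widehat{1_B}(\xi)}e(x\xi/N'),\]
so to show $x\in A+B$ it suffices to ensure the right-hand side is positive. Splitting according to whether $\xi$ lies in the joint spectrum $\Lambda:=\mathrm{Spec}_\eta(A)\cup\mathrm{Spec}_\eta(B)$, Parseval controls the off-spectrum contribution provided $x$ is in a Bohr set associated to $\Lambda$ of suitably small width.

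The key step is bounding the effective dimension of this Bohr set. Chang's theorem yields a Bohr set of dimension roughly $\eta^{-2}\log(1/\alpha)$; applying the new spectrum lemma instead produces a smaller generating set $\Gamma$ of frequencies so that every element of $\Lambda$ is an integer combination of a small number of elements of $\Gamma$. This will deliver a translate of a Bohr set $B(\Gamma,\rho)$ contained in $A+B$ whose dimension $d$ and radius $\rho$ are quantitatively better than those obtained in \cite{Sa:2008}. A routine iteration or covering step may also be needed to ensure that $\Gamma$ behaves well enough (for instance, is dissociated or near-dissociated) to give the Bohr-set containment cleanly.

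The final step is the classical fact, proved by a pigeonhole argument in $(\bbr/\bbz)^d$, that every Bohr set of rank $d$ and radius $\rho$ inside $\{1,\ldots,N\}$ contains an arithmetic progression of length $\gg (\rho N)^{1/d}$. Substituting the Bohr-set parameters and optimising the spectrum threshold $\eta$ in terms of $\alpha$ should then produce the stated lower bound $\exp(cf(\alpha)\sqrt{\log N})$, where $f(\alpha)=\alpha^{1/2}/\log(1/\alpha)$. The main obstacle will be this last balancing: one must choose $\eta$ so that the quantitative gain from the new spectrum lemma translates into an honest improvement in the $\log(1/\alpha)$ dependence of $f(\alpha)$, and must verify that the radius bounds it produces are compatible with the Bohr-set-to-AP estimate. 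The constants $c_1,c_2$ enter only through Parseval at the spectrum-splitting step, which explains the dependence of $c$ in the conclusion on those parameters alone.
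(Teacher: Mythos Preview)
Your outline has a genuine gap at the ``key step''. Theorem~\ref{thch} does \emph{not} cover the whole large spectrum: it only produces a subset $\Delta'\subset\Delta_\eta$ carrying an $\eta$-fraction of the mass which is $O(\eta^{-1}\log(1/\alpha))$-covered. In your single-shot Fourier decomposition, after passing to a Bohr set on the generating set $\Gamma$ for $\Delta'$, the frequencies in $\Delta_\eta\setminus\Delta'$ still satisfy $\lvert\widehat{1_A}(\xi)\rvert\geq\eta\abs{A}$ but are \emph{not} made small on that Bohr set, and Parseval does not help because they lie inside the spectrum, not outside it. Iterating the new lemma to exhaust $\Delta_\eta$ costs $\gg\eta^{-1}$ rounds and restores the Chang-type dimension $\eta^{-2}\log(1/\alpha)$, so no gain survives. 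Thus the direct ``cover the spectrum, pass to a Bohr set'' argument cannot exploit the new lemma.

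The paper proceeds differently. It keeps Sanders' density-increment iteration intact and replaces only his iteration lemma (Lemma~6.4 of \cite{Sa:2008}) by the sharper Lemma~\ref{itsa}: either $A_1+A_2$ already occupies a $(1-\sigma)$-fraction of a dilate $B'$ of the ambient Bohr set, or one of the $A_i$ enjoys a constant-factor density increment on a sub-Bohr set whose rank has grown by only $O(\alpha^{-1}\mathcal{L}(\sigma))$ rather than $O(\alpha^{-2}\mathcal{L}(\sigma))$. The point is that a density increment needs only a \emph{positive proportion} of the spectral $L^2$-mass to be captured, which is exactly what Theorem~\ref{then} (via Theorem~\ref{thde}) supplies; full coverage is never required. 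Running this dichotomy $O(\log(1/\alpha))$ times and feeding the resulting Bohr-set parameters into the standard Bohr-set-to-AP estimate is what produces $f(\alpha)=\alpha^{1/2}/\log(1/\alpha)$. Your sketch should therefore be recast as an iteration rather than a one-step spectrum covering.
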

By contrast, \cite{Sa:2008} used Chang's lemma to prove a similar result with $f(\alpha)=\alpha$, a result which was first proved using a different method by \cite{Gr:2002}. This was subsequently improved by \cite{CrLaSi:2013} to $f(\alpha)=\alpha^{1/2}\allowbreak(\log(1/\alpha))^{-3/2}$. The proof of Theorem~\ref{aps} can be easily adapted to give a similar bound when the sizes of $A$ and $B$ are not comparable but in this situation the method of \cite{CrLaSi:2013} delivers superior bounds.

We shall present our method in a general setting, which will allow us to prove more general versions of Theorem~\ref{first} in both the integers and their finite-characteristic analogue $\bbf_q[t]$. In general, we will be concerned with counting non-trivial solutions to equations of the shape
\begin{equation}\label{main}c_1x_1+c_2x_2+c_3x_3=0,\end{equation}
where $c_1+c_2+c_3=0$. A trivial solution is one where $x_1=x_2=x_3$. Applied to any such equation our method yields the following result over the integers.
\begin{theorem}\label{thm1}
If $c_1,c_2,c_3\in\bbz\backslash\{0\}$ are such that $c_1+c_2+c_3=0$ and $A\subset\{1,\ldots,N\}$ contains no non-trivial solutions to \eqref{main} then
\[\abs{A}\ll_{\mathbf{c}} \frac{(\log\log N)^4}{\log N}N.\]
\end{theorem}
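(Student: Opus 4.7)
The plan is to adapt the density-increment strategy behind Theorem~\ref{first} to the general translation-invariant equation \eqref{main}. First, fix a prime $N'$ with $N\leq N'\leq CN$ and coprime to $c_1c_2c_3$, so that each dilation $x\mapsto c_ix$ is a bijection on $\bbz/N'\bbz$. Embedding $A$ into $\bbz/N'\bbz$ and writing
\[
T(A):=\#\{(x_1,x_2,x_3)\in A^3:c_1x_1+c_2x_2+c_3x_3\equiv 0\pmod{N'}\},
\]
the usual Fourier expansion expresses $T(A)$ as a sum over $r\in\bbz/N'\bbz$ of triple products $\widehat{1_A}(c_1r)\widehat{1_A}(c_2r)\widehat{1_A}(c_3r)$ (up to normalisation), with main term of order $\alpha^3(N')^2$ coming from $r=0$. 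Separating genuine solutions from wraparound, the hypothesis that $A$ has no nontrivial solution to \eqref{main} forces $T(A)\ll\abs{A}$, so the nonzero-frequency contribution must essentially cancel the main term. This yields either a single large Fourier coefficient or, after a standard $L^2$ manipulation, a lower bound on the mass of the large spectrum $\Delta_\eta(A):=\{r:\abs{\widehat{1_A}(r)}\geq\eta\alpha\}$ for some $\eta\asymp\alpha$; the coefficients $c_i$ enter only through dilation of the frequency variable, affecting only implicit constants that depend on $\mathbf{c}$.

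Next, I would invoke the new structural lemma for the large spectrum that is the centrepiece of the paper, which improves on Chang's lemma in the regime relevant here. This provides a set $\Gamma$ of frequencies of size $\abs{\Gamma}\ll\alpha^{-1}(\log\log\alpha^{-1})^{O(1)}$ such that the associated Bohr set $B(\Gamma,\rho)$ carries a density increment for $A$: one locates a translate of a sub-Bohr set $B'$ on which $\abs{A\cap B'}/\abs{B'}\geq\alpha(1+c)$ for some absolute $c>0$. The $c_i$ merely enlarge the frequency set by at most a factor of three and distort Bohr-set parameters by bounded amounts, all absorbed into the $\ll_{\mathbf{c}}$ constants.

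Finally, iterate. Each pass increases the density by a factor $(1+c)$ while restricting to an arithmetic progression extracted from the relevant Bohr set, whose length decreases in a controlled way; after $O(\log\alpha^{-1})$ doublings the density would exceed $1$, which is impossible, and unwinding the recursion yields $\alpha\ll(\log\log N)^4/\log N$. The main obstacle is the quantitative bookkeeping: the exponent $4$ is delicate and arises from combining the $(\log\log)^{O(1)}$ overhead in the new spectral lemma, the Bohr-set-to-AP reduction, and the doubling schedule, and one must verify that the sharp form of the new lemma feeds cleanly through every iteration without accumulating additional logarithmic losses. The generalisation to arbitrary $\mathbf{c}$ requires no new ideas beyond carrying the dilations $x\mapsto c_ix$ through this entire scheme and tracking their effect on the frequency set and Bohr radii.
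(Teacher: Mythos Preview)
Your high-level plan---embed into $\bbz_{N'}$, detect a large non-principal Fourier contribution, feed it through the new spectral lemma, and iterate a density increment---matches the paper's. But one step is a genuine gap: you propose to iterate by ``restricting to an arithmetic progression extracted from the relevant Bohr set'', and you list a ``Bohr-set-to-AP reduction'' among the sources of logarithmic loss. This is the old Heath-Brown--Szemer\'edi manoeuvre, and with the parameters here it is fatal. The increment produced by the spectral lemma lives on a Bohr set of rank $d\asymp\alpha^{-1}\mathcal{L}(\alpha)$; the longest arithmetic progression inside such a Bohr set has length only $N^{O(1/d)}$, so passing to a progression replaces $\log N$ by something of order $\alpha\log N/\mathcal{L}(\alpha)$. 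Iterating this $O(\mathcal{L}(\alpha))$ times cannot reach $\alpha\asymp(\log N)^{-1+o(1)}$; at best you recover a bound of Heath-Brown type, $\alpha\ll(\log N)^{-c}$ for some small absolute $c$.

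The paper avoids this by never leaving the Bohr-set world. It iterates Theorem~\ref{intdi}, which takes a \emph{regular} Bohr set $B$ together with a much narrower regular $B(\delta)$ (Bourgain's two-scale setup), and outputs a density increment on a regular Bohr set $B'\subset B$ whose rank grows by $O(\alpha^{-1}\mathcal{L}(\alpha))$ and whose size shrinks by a factor $\exp(-O(\mathcal{L}(\alpha)^2(d+\alpha^{-1}\mathcal{L}(\alpha))))$. The $(\log\log N)^4$ then falls out of explicit bookkeeping: $K\ll\mathcal{L}(\alpha)$ steps, rank $d_j\ll\alpha^{-1}\mathcal{L}(\alpha)$ throughout, total size loss $\exp(-O(\mathcal{L}(\alpha)^4\alpha^{-1}))$. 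Two further points your sketch suppresses are essential for this to work: the spectral lemma must be applied in its \emph{relative} form (Theorem~\ref{then}), producing a covering by $\Delta_\epsilon(B')$ with $\epsilon\leq\exp(-O(\mathcal{L}(\alpha)^2))$ rather than by $\{0\}$---this is what allows the increment to be realised inside the current Bohr set and also what contributes the $\mathcal{L}(\alpha)^2$ factor in the shrinkage; and the covering dimension the lemma actually gives is $\ll\alpha^{-1}\mathcal{L}(\alpha)$, not $\alpha^{-1}(\log\log\alpha^{-1})^{O(1)}$ as you wrote.
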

In particular, Theorem~\ref{first} follows by considering the coefficients $\mathbf{c}=(1,1,-2)$. The polynomial ring $\bbf_q[t]$ is, in many respects, a finite-characteristic analogue of the integers, so we should expect a result of similar strength to hold. In \cite{Bl:2012} the method of \cite{Sa:2011} was adapted to give a quantitative result for $\bbf_q[t]$. In this paper we shall similarly prove an analogue of Theorem~\ref{thm1} for $\bbf_q[t]$. As in \cite{Bl:2012} the finite-characteristic property leads to a slight improvement in this case (due to the preponderance of subgroups).
\begin{theorem}\label{thm2}
Let $A\subset \bbf_q[t]_{\deg<n}$. If $c_1,c_2,c_3\in\bbf_q[t]\backslash\{0\}$ are such that $c_1+c_2+c_3=0$ and $A$ contains no non-trivial solutions to \eqref{main} then
\[\abs{A}\ll_{\mathbf{c}} \frac{(\log n)^2}{n}q^n.\]
\end{theorem}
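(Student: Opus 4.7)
The plan is a Fourier-analytic density-increment argument in the $\bbf_q$-vector space $V:=\bbf_q[t]_{\deg<n}$ of dimension $n$, with the new structural lemma from the introduction playing the role that Chang's lemma plays in the Sanders-style approach. Identify $V$ with its Pontryagin dual via a non-trivial additive character of $\bbf_q$ and any non-degenerate $\bbf_q$-bilinear form. Writing $\alpha=\abs{A}/q^n$, the Fourier identity
\[\#\{(x_1,x_2,x_3)\in A^3 : c_1x_1+c_2x_2+c_3x_3=0\} = \sum_{\xi\in V} \widehat{1_A}(c_1\xi)\widehat{1_A}(c_2\xi)\widehat{1_A}(c_3\xi)\]
(after suitable normalisation, with multiplication by each $c_i\in\bbf_q[t]$ treated as an $\bbf_q$-linear map on $V$ through truncation), combined with $c_1+c_2+c_3=0$ to ensure that the $\xi=0$ contribution equals the expected main term $\alpha^3 q^{2n}$, forces some non-zero frequency $\xi$ with $\abs{\widehat{1_A}(c_i\xi)}\gg_{\mathbf{c}}\alpha^2$ for some $i$; since each $c_i$ is a non-zero polynomial this yields a genuine large Fourier coefficient of $1_A$.

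Next I would convert this spectral information into a density increment on an $\bbf_q$-subspace. A Chang-lemma-type approach would find a subspace of codimension $O(\alpha^{-1}\log\alpha^{-1})$ carrying a density boost by a $(1+\Omega(1))$ factor; iterating through $O(\log\alpha^{-1})$ doublings would produce a Sanders-style bound. Instead I would invoke the paper's new structural lemma on the large spectrum $\mathrm{Spec}_\rho(A)$, which, in the spirit of \cite{BaKa:2012}, goes beyond merely bounding the $\bbf_q$-dimension of the span and should extract the required subspace at a smaller codimension cost than Chang would predict. Because we work in an $\bbf_q$-vector space, this structure can be realised by a honest subspace rather than by a Bohr set as in the integer case; this is the ``preponderance of subgroups'' referenced in the introduction, and the source of the improvement over Theorem~\ref{thm1}.

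Iterating closes the argument. Starting from $A_0=A$ in $V_0=V$ with density $\alpha_0=\alpha$, produce a chain of subspaces $V_0\supsetneq V_1\supsetneq\cdots$ and cosets carrying $A$ with densities satisfying $\alpha_{i+1}\geq(1+\eta)\alpha_i$. After $O(\log\alpha^{-1})$ iterations the density would exceed $1$, which would force non-trivial solutions in $A$ contradicting the hypothesis; hence the iteration must halt by exhausting the ambient dimension $n$. Summing per-step codimension costs geometrically, and tracking the logarithmic factors produced by the spectral lemma, gives total codimension $\ll \alpha^{-1}(\log\alpha^{-1})^2$. Comparing with the bound $n$ yields $\alpha\ll(\log\alpha^{-1})^2/n$, and then $\alpha\ll(\log n)^2/n$ after a standard bootstrap, with implicit constants depending on $\mathbf{c}$ through the bookkeeping at each extraction step.

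The principal obstacle---and the source of the theorem's strength---is the new spectrum lemma itself, which must deliver the required codimension savings over Chang's lemma in the parameter regime of interest, and do so with a multiplicative (not merely additive) density increment. All other ingredients---the Fourier identity, the extraction of a large coefficient, and the iteration scheme---are routine in the vector-space setting and transfer from the integer case of Theorem~\ref{thm1} with only notational changes.
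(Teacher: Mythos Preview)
Your overall strategy---density increment via the new spectral lemma, iterated inside an $\bbf_q$-vector space---is the paper's approach. Two genuine technical points, however, are not handled correctly in your sketch.

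First, the Fourier setup: multiplication by a nonconstant $c_i\in\bbf_q[t]$ is \emph{not} a linear map on $V=\bbf_q[t]_{\deg<n}$, and ``truncation'' destroys the equation you are trying to count. The paper fixes this by embedding into $\A_N=\bbf_q[t]/(p(t))$ for an irreducible $p$ of degree $n$, so that each $c_i$ acts as an invertible $\bbf_q$-linear map and the identity $\Upsilon_{\mathbf c}(A)=\langle (c_1\cdot A)\ast(c_2\cdot A),(-c_3\cdot A)\rangle$ holds cleanly. Related to this, your first paragraph extracts a single large Fourier coefficient, which is the Roth mechanism and cannot deliver the bound; the paper (and your second paragraph) uses the Heath--Brown/Szemer\'edi $L^2$-mass on the spectrum, passing through Theorem~\ref{thde} and Theorem~\ref{maindi}.

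Second, and more importantly, you do not account for the dilations by $c_i$ through the iteration, and this is precisely the source of one of your two logarithms. At each step the paper must pass to sub-Bohr-spaces compatible with all three dilates $c_i\cdot B$, which forces the frequency set to be inflated by a factor like $\{1,t,\ldots,t^{3\ell}\}$ (with $\ell=\max_i\deg c_i$); after $K$ steps the frequency sets accumulate a factor of order $K$ in size, giving $\rk(B_K)\ll_\ell K\sum_j\lvert\Lambda_j\rvert\ll\mathcal L(\alpha)^2\alpha^{-1}$ rather than the $\mathcal L(\alpha)\alpha^{-1}$ that your ``geometric sum of per-step costs'' would yield. Without this bookkeeping your argument would output $\alpha\ll(\log n)/n$, not $(\log n)^2/n$; with it, you are carrying out exactly the paper's proof.
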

We remark that if $c_1,c_2,c_3\in\bbf_q\backslash\{0\}$ then the problem is much simpler and better bounds are available. In particular, \cite{LiSp:2009} proved that in this case we have the upper bound $\abs{A}\ll_{\mathbf{c}} q^n/n$; when $c_1,c_2,c_3\in \bbf_p\backslash\{0\}$ this bound was first provided by \cite{Me:1995}. It is also likely that the work of \cite{BaKa:2012} in $\bbf_p^n$ could be adapted to deliver a bound of $\abs{A}\ll_{\mathbf{c}} q^n/n^{1+\epsilon}$ for some absolute constant $\epsilon>0$, when $c_1,c_2,c_3\in \bbf_q\backslash\{0\}$. 

The core of our argument is a qualitatively stronger alternative to a well-known lemma of \cite{Ch:2002} concerning the additive structure of the large spectrum of the Fourier transform in finite groups. More particularly, if $G$ is a finite abelian group and $A\subset G$ with density $\alpha=\abs{A}/\abs{G}$ then we define the large spectrum $\Delta_\eta(A)$ as the set of characters $\gamma\in\widehat{G}$ such that $\abs{\widehat{A}(\gamma)}\geq \eta\abs{A}$. It follows immediately from Parseval's theorem that $\abs{\Delta_\eta(A)}\leq \eta^{-2}\alpha^{-1}$, which is the best possible bound on the cardinality. Chang's lemma shows that a smaller set can be found which additively `controls' the spectrum. In particular, we say that $\Delta$ is $d$-covered if there exists $\Lambda$ of size $\abs{\Lambda}\leq d$ such that
\[\Delta\subset \left\{ \sum_{\lambda\in\Lambda}\epsilon_\lambda\lambda : \epsilon_\lambda\in \{-1,0,1\}\right\}.\]
In this language, Chang's lemma may be stated as follows.
\begin{theorem}[\cite{Ch:2002}]
If $A\subset G$ with density $\alpha=\abs{A}/\abs{G}$ then $\Delta_\eta(A)$ is $d$-covered for some 
\[d\ll \eta^{-2}\log(1/\alpha).\]
\end{theorem}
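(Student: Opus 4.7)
The plan is to exhibit a covering set of size $O(\eta^{-2}\log(1/\alpha))$ by taking a maximal \emph{dissociated} subset of $\Delta_\eta(A)$, where $\Lambda\subset\widehat{G}$ is called dissociated if $\sum_{\lambda\in\Lambda}\epsilon_\lambda\lambda=0$ with $\epsilon_\lambda\in\{-1,0,1\}$ forces all $\epsilon_\lambda=0$. If $\Lambda\subset\Delta_\eta(A)$ is a maximal dissociated subset, then for any $\gamma\in\Delta_\eta(A)\setminus\Lambda$ the set $\Lambda\cup\{\gamma\}$ is not dissociated, and since $\Lambda$ itself is dissociated the nontrivial relation witnessing this must have $\pm1$ coefficient on $\gamma$. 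Hence every $\gamma\in\Delta_\eta(A)$ lies in the $\{-1,0,1\}$-span of $\Lambda$, so $\Delta_\eta(A)$ is $\abs{\Lambda}$-covered and it suffices to bound $\abs{\Lambda}$.

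The key input is Rudin's inequality: if $\Lambda$ is dissociated and $f=\sum_{\lambda\in\Lambda}c_\lambda\lambda$, then $\norm{f}_p\ll\sqrt{p}\norm{f}_2$ for all $p\geq 2$, where the norms are with respect to normalized counting measure on $G$. Choose $c_\lambda=\widehat{A}(\lambda)/\abs{\widehat{A}(\lambda)}$, which has modulus one, so $\norm{f}_2=\abs{\Lambda}^{1/2}$ and Rudin gives $\norm{f}_p\ll\sqrt{p\abs{\Lambda}}$. On the other hand, expanding the characters directly,
\[
\sum_{x\in A}f(x)=\sum_{\lambda\in\Lambda}c_\lambda\,\overline{\widehat{A}(\lambda)}=\sum_{\lambda\in\Lambda}\abs{\widehat{A}(\lambda)}\geq \abs{\Lambda}\eta\abs{A}.
\]
Applying Hölder with exponents $p,p'$ to the left side yields
\[
\abs{\Lambda}\eta\abs{A}\leq\norm{1_A}_{p'}\norm{f}_p\abs{G}\ll \alpha^{1/p'}\sqrt{p\abs{\Lambda}}\,\abs{G},
\]
which rearranges to $\abs{\Lambda}\ll\eta^{-2}p\,\alpha^{-2/p}$. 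Choosing $p=\log(1/\alpha)$ balances the factors and delivers $\abs{\Lambda}\ll\eta^{-2}\log(1/\alpha)$, as required.

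The substantive ingredient is Rudin's inequality, which is standard but not trivial; it is typically obtained either by moment estimation showing that dissociated characters behave like independent Rademacher variables, or by a direct combinatorial count of representations in iterated sumsets $\Lambda\pm\cdots\pm\Lambda$ afforded by the definition of dissociatedness. Once this is granted, the remainder of the argument is a clean Parseval/Hölder computation together with the optimization of $p$, so the only real obstacle is invoking (or reproving) Rudin.
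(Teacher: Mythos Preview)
The paper does not supply its own proof of this statement: Chang's lemma is quoted as a known result from \cite{Ch:2002} purely for context, before the paper states and proves its new alternative, Theorem~\ref{thch}. So there is no proof in the paper to compare against.

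Your argument is the standard proof of Chang's lemma and is essentially correct. A couple of minor points worth tidying. First, with the Fourier convention used in this paper, $\widehat{A}(\lambda)=\sum_{x\in A}\lambda(x)$, one has $\sum_{x\in A}f(x)=\sum_{\lambda}c_\lambda\widehat{A}(\lambda)$ rather than $\sum_\lambda c_\lambda\overline{\widehat{A}(\lambda)}$; the fix is simply to take $c_\lambda=\overline{\widehat{A}(\lambda)}/\abs{\widehat{A}(\lambda)}$, which still has modulus one and makes the rest go through unchanged. Second, the choice $p=\log(1/\alpha)$ requires $p\geq 2$, so you should remark that for $\alpha$ bounded below (say $\alpha\geq e^{-2}$) the claim is trivial from Parseval. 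Neither of these affects the substance.

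It is perhaps worth noting, since you are reading this paper, that the machinery developed here (the energy bounds of Section~3 and Theorem~\ref{then}) could also be pushed to recover Chang's lemma, but that would be a roundabout route compared to the direct Rudin-inequality argument you give; the paper's machinery is built precisely to go \emph{beyond} Chang's lemma in the direction of Theorem~\ref{thch}.
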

This important structural lemma has found many applications since, and in particular has been instrumental in the recent advances in Roth's theorem. By contrast, in this paper our method does not directly use Chang's lemma, but rather the following theorem which is, for some applications, much stronger.

\begin{theorem}\label{thch}
If $A\subset G$ with density $\alpha=\abs{A}/\abs{G}$ then there exists $\Delta'\subset\Delta_\eta(A)$ of size $\abs{\Delta'}\gg \eta\abs{\Delta_\eta(A)}$ which is $d$-covered for some 
\[d\ll \eta^{-1}\log(1/\alpha).\]
\end{theorem}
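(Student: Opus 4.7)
Write $\Delta = \Delta_\eta(A)$ and, for $\gamma \in \widehat G$, set $r_\gamma := \abs{\widehat{1_A}(\gamma)}/\abs{A}$, so that $r_\delta \geq \eta$ on $\Delta$. The plan is to refine Chang's argument by combining a weighted Rudin--H\"older inequality with a dyadic selection, in the spirit of \cite{BaKa:2012}.

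First I would appeal to Chang's lemma to find a maximal dissociated $\Lambda \subset \Delta$, so that every $\delta \in \Delta$ has a unique $\{-1,0,1\}$-representation $\delta = \sum_{\lambda \in R_\delta}\epsilon_\lambda\lambda$ with $R_\delta \subset \Lambda$, and $\abs{\Lambda} \ll \eta^{-2}\log(1/\alpha)$. Rerunning the argument behind Chang's lemma with weights $a_\lambda = r_\lambda e^{i\theta_\lambda}$ in place of the unit phases used in the standard proof (where $\widehat{1_A}(\lambda) = r_\lambda\abs{A}e^{i\theta_\lambda}$) gives the weighted bound
\[ \sum_{\lambda \in \Lambda} r_\lambda^2 \ll \log(1/\alpha). \]
As a consequence, for every $\mu \geq \eta$ the level set $\Lambda_\mu := \{\lambda \in \Lambda : r_\lambda \geq \mu\}$ satisfies $\abs{\Lambda_\mu} \ll \mu^{-2}\log(1/\alpha)$; in particular, choosing $\mu \asymp \eta^{1/2}$ produces a candidate covering set $\Lambda' := \Lambda_\mu$ of size $\ll \eta^{-1}\log(1/\alpha)$.

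The heart of the proof is then to show that the $\{-1,0,1\}$-span of $\Lambda'$ already contains at least $\gg \eta\abs{\Delta}$ elements of $\Delta$. For this I would establish a pointwise estimate tying $r_\delta$ to the magnitudes $\{r_\lambda : \lambda \in R_\delta\}$---heuristically, of the form $r_\delta \ll \prod_{\lambda \in R_\delta}(r_\lambda + \eta^{1/2})$ up to logarithmic losses---which would force most $\delta$ with $r_\delta \geq \eta$ to satisfy $R_\delta \subset \Lambda'$. Such a fibre-by-fibre inequality should come from applying the weighted Rudin-type inequality to a Riesz product $\prod_{\lambda \in \Lambda}(1 + r_\lambda\operatorname{Re}(e^{-i\theta_\lambda}\lambda(x)))$ and using the fact that dissociativity of $\Lambda$ makes its $L^{2k}$-norm compute multiplicatively over $\lambda$.

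The main obstacle is precisely this transfer of magnitude information from $\Lambda$ to $\Delta$: Chang's lemma on its own supplies only aggregate inequalities summed over $\Lambda$, not the per-$\delta$ constraints that the covering problem requires. I expect the most delicate step to be the balancing of the Rudin exponent $k$ against the dyadic parameter $\mu$ so that the final count of $\delta \in \Delta$ with $R_\delta \subset \Lambda'$ still exceeds $\eta\abs{\Delta}$; once that is in hand, a deterministic extraction produces the required $\Delta'$ and yields the $d$-covering with $d \ll \eta^{-1}\log(1/\alpha)$.
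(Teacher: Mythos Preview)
Your approach has a genuine gap at the step you yourself flag as the most delicate. The proposed pointwise inequality $r_\delta \ll \prod_{\lambda \in R_\delta}(r_\lambda + \eta^{1/2})$ is not a known fact and there is no mechanism that would produce it: the Fourier coefficient $\widehat{1_A}(\delta)$ carries no memory of how $\delta$ happens to decompose over a fixed dissociated set $\Lambda$. A Riesz product $\prod_\lambda(1+r_\lambda\mathrm{Re}(e^{-i\theta_\lambda}\lambda))$ does have Fourier coefficients that factor as $\prod r_\lambda$ over the support of a representation, but that is a statement about the Riesz product, not about $1_A$; pairing the two against each other only recovers the aggregate inequality $\sum_{\lambda\in\Lambda} r_\lambda^2 \ll \log(1/\alpha)$ that you already have, not any per-$\delta$ constraint. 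Concretely, if every $\delta\in\Delta$ has $r_\delta$ lying just above $\eta$ and well below $\eta^{1/2}$, then your level set $\Lambda'=\Lambda_{\eta^{1/2}}$ is empty and its span captures nothing of $\Delta$, yet the theorem must still hold for such $A$. So the scheme of thresholding inside a single maximal dissociated set cannot work without a genuinely new idea.

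The paper's argument is of a different nature and does not pass through a fixed dissociated $\Lambda$ at all. It first records Shkredov's lower bound $E_{2m}(\Delta,\{0\})\geq \tfrac12\eta^{2m}\alpha\,\abs{\Delta}^{2m}$ for the $2m$-fold additive energy of $\Delta=\Delta_\eta(A)$, with $m\asymp\log(1/\alpha)$. It then proves a dichotomy by random sampling (this is the step genuinely in the spirit of \cite{BaKa:2012}): for a weight $\omega$ on $\widehat G$ and $d\asymp \eta^{-1}m$, either some $\Delta'$ carrying weight $\geq (m/d)\norm{\omega}_1$ is $2d$-covered, or a random $d$-element $\omega$-sample is almost fully dissociated with high probability, which forces $E_{2m}(\omega,\{0\})\leq 2^{O(m)}m^{2m}d^{-2m}\norm{\omega}_1^{2m}$. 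With $\omega=1_\Delta$ the second alternative contradicts the Shkredov lower bound, so one lands in the first. The covering set here is not a subset of any pre-chosen $\Lambda$; it is produced indirectly, by showing that the absence of any good covering would make the spectrum too dissociated to support the additive energy it provably has.
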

In particular, we can save a factor of $\eta$ in the dimension while only losing a factor of $\eta$ on the size of the set considered. We prove Theorem~\ref{thch}, or rather, a more general version, by considering the additive energy of large spectra, building on work by \cite{Sh:2008} and \cite{BaKa:2012}.

For our applications we shall discuss the ideas leading to Theorem~\ref{thch} in a general setting that also applies to covering `relative' to a given set, which is needed for the density increment strategy used for Roth's theorem.

\section{Notation and definitions}
We fix some finite abelian group $G$ with dual group $\widehat{G}$. Let $N=\abs{G}$. For convenience we shall use the counting measure on both $G$ and $\widehat{G}$. In particular all $L^p$ norms on both $G$ and $\widehat{G}$ are defined with respect to the counting measure, so that if $f:G\to\bbc$ and $p\geq 1$ then
\[\norm{f}_p=\brac{\sum_x \abs{f(x)}^p}^{1/p}\textrm{ and }\norm{f}_\infty=\sup_{x\in G}\abs{f(x)},\]
and similarly for $\omega:\widehat{G}\to\bbc$. For any function $f:G\to\bbc$ we define the Fourier transform $\widehat{f}:\widehat{G}\to\bbc$ by
\[\widehat{f}(\gamma)=\sum_x f(x)\gamma(x).\]
For all functions $f,g:G\to\bbc$ we have Parseval's identity
\[\langle f,g\rangle=\sum_x f(x)\overline{g(x)}=N^{-1}\sum_\gamma \widehat{f}(\gamma)\overline{\widehat{g}(\gamma)}.\]
If $B\subset G$ and $\Gamma\subset\widehat{G}$ then for any $\epsilon\in[0,2]$ we say that $B$ has $\epsilon$-control of $\Gamma$ if for all $x\in B$ and $\gamma\in\Gamma$ we have
\[\abs{1-\gamma(x)}\leq \epsilon.\]
This condition will be important in several places; in general, the hypothesis of control presents no serious difficulties as we will just define the sets we are working with precisely so that they have the required control. The details will vary on the choice of the group $G$, and hence for most of the paper we simply present the necessary control hypothesis and take care of how to ensure it in our applications in the final sections of the paper. 

For any $\eta\in[0,1]$ and $f:G\to\bbc$ we define the spectrum 
\[\Delta_\eta(f)=\left\{ \gamma\in\widehat{G} : \abs{\widehat{f}(\gamma)}\geq \eta\norm{f}_1\right\}\]
and the level spectrum
\[\tilde{\Delta}_\eta(f)=\left\{ \gamma\in\widehat{G} : \eta\norm{f}_1\leq \abs{\widehat{f}(\gamma)}
<2\eta\norm{f}_1\right\}.\]
For $0<\delta\leq 1$ we shall use the convenient shorthand $\mathcal{L}(\delta)$ to denote $2+\lceil \log(1/\delta)\rceil$. Finally, it will often be convenient for certain sets to renormalise the counting measure to be compact; these sets will always be denoted by $B$ (possibly with some subscripts or superscripts) and then for any $A\subset G$ we define $\beta(A)=\abs{A\cap B}/\abs{B}$. In general, if we speak of $A\subset B$ having relative density $\alpha$ then this means $\beta(A)=\alpha$. 

We will frequently abuse notation by conflating a set and its characteristic function; thus, for example, if $\Gamma\subset\widehat{G}$ then $\Gamma(\gamma)=1$ if $\gamma\in\Gamma$ and $0$ otherwise.

For any $\Gamma\subset\widehat{G}$ and $\omega:\widehat{G}\to\bbr_+$ and integer $m\geq 1$ we define the additive energy as
\[E_{2m}(\omega,\Gamma)=
\sum_{\gamma_1,\ldots,\gamma_{m}'}\omega(\gamma_1)\cdots\omega(\gamma_{m}')
\Gamma\brac{\sum_{i=1}^{m}\gamma_i-\sum_{j=1}^{m}\gamma_{j}'}.\]
Similarly, we define the restricted energy as 
\[E_{t_1,t_2}^\sharp(\omega,\Gamma)=
\sum_{\substack{\Delta_1\in\binom{\widehat{G}}{t_1},\Delta_2\in\binom{\widehat{G}}{t_2}\\ \Delta_1\cap\Delta_2=\emptyset}}\prod_{\gamma\in\Delta_1\cup\Delta_2}\omega(\gamma)
\Gamma\brac{\sum_{\gamma\in\Delta_1}\gamma-\sum_{\gamma'\in\Delta_2}\gamma'}.\]
We write $E_{2m}^\sharp$ for $E_{m,m}^\sharp$ and for any $\omega$ and $\Gamma$ we define $E_0(\omega,\Gamma)=E_0^\sharp(\omega,\Gamma)=1$. Observe that $E$ and $E^\sharp$ differ, not only in the restriction on repeating elements, but also in that the former is sensitive to permutations of the $\gamma_i$. We say that $S$ is $d$-covered by $\Gamma$ if there exists $\Lambda\subset\widehat{G}$ of size $\abs{\Lambda}\leq d$ such that
\[S\subset \Gamma-\Gamma+\langle \Lambda\rangle,\]
where
\[\langle \Lambda\rangle = \left\{ \sum_{\lambda\in\Lambda}\epsilon_\lambda\lambda : \epsilon_\lambda\in \{-1,0,1\}\right\}\]
and $\langle \emptyset\rangle=\{0\}$.

We say that $\Delta$ is $\Gamma$-dissociated if for all $k\geq 1$ and $\lambda\in\widehat{G}$ there are at most $2^k$ many pairs $\Delta_1,\Delta_2$ of disjoint subsets of $\Delta$ such that $\abs{\Delta_1\cup\Delta_2}=k$ and
\[\sum_{\gamma\in\Delta_1}\gamma-\sum_{\gamma'\in\Delta_2}\gamma'\in \Gamma+\lambda.\]
Finally, we say that $S$ has $\Gamma$-dimension of $d$ if $d$ is the size of the largest $\Gamma$-dissociated subset of $S$. We observe that the dimension is always at least 1 since any singleton set is trivially $\Gamma$-dissociated for all $\Gamma\subset\widehat{G}$. Furthermore, if $\Delta$ is $\Gamma$-dissociated then it is also $\Gamma'$-dissociated for any translate $\Gamma'$ of $\Gamma$. 

\section{Additive energy}
In this section we discuss the relationship between the dimension of a set and its additive energy. If a set has a very large dimension then almost all of the set is dissociated, and hence one would expect few additive relations between its elements, so it should have small additive energy. The following lemma verifies this intuition.
\begin{lemma}\label{en1}
If $S\subset\widehat{G}$ has $\Gamma$-dimension $\abs{S}-k$ then for all $m\geq t_1,t_2\geq 0$
\[E^\sharp_{t_1,t_2}(S,\Gamma)\leq 4^{k+m}.\]
\end{lemma}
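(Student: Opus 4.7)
The plan is to extract a maximal $\Gamma$-dissociated subset $D\subset S$ (of size $\abs{S}-k$ by hypothesis), set $T=S\setminus D$ (of size $k$), and split each configuration $(\Delta_1,\Delta_2)$ counted by $E^\sharp_{t_1,t_2}(S,\Gamma)$ according to how it meets $T$ and $D$. Writing $\Delta_i=T_i\sqcup \Delta_i'$ with $T_i=\Delta_i\cap T$ of size $k_i$ and $\Delta_i'\subset D$, the membership condition $\sum_{\gamma\in\Delta_1}\gamma-\sum_{\gamma'\in\Delta_2}\gamma'\in\Gamma$ becomes
\[\sum_{\gamma\in\Delta_1'}\gamma-\sum_{\gamma'\in\Delta_2'}\gamma'\in\Gamma+\lambda,\qquad \lambda:=\sum_{\gamma'\in T_2}\gamma'-\sum_{\gamma\in T_1}\gamma,\]
which is a translate of $\Gamma$ depending only on $(T_1,T_2)$.

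Since $D$ is $\Gamma$-dissociated, the definition applied with the shift $\lambda$ bounds the number of disjoint pairs $(\Delta_1',\Delta_2')$ in $D$ of total size $(t_1-k_1)+(t_2-k_2)$ satisfying this condition by $2^{(t_1-k_1)+(t_2-k_2)}$; in the degenerate case $\Delta_1'=\Delta_2'=\emptyset$ there is trivially at most one contribution. Summing over disjoint $T_1,T_2\subset T$ with $\abs{T_i}=k_i$, using $\binom{k-k_1}{k_2}\leq\binom{k}{k_2}$ and $\sum_j\binom{k}{j}2^{-j}=(3/2)^k$, yields
\[E^\sharp_{t_1,t_2}(S,\Gamma)\leq \sum_{k_1=0}^{t_1}\sum_{k_2=0}^{t_2}\binom{k}{k_1}\binom{k-k_1}{k_2}\,2^{t_1+t_2-k_1-k_2}\leq 2^{t_1+t_2}(3/2)^{2k}=2^{t_1+t_2}(9/4)^k.\]
Finally, since $m\geq \max(t_1,t_2)\geq (t_1+t_2)/2$ we have $2^{t_1+t_2}\leq 4^m$, and combined with $(9/4)^k\leq 4^k$ this gives the claimed bound $4^{k+m}$.

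The only conceptual point worth highlighting is that the $\Gamma$-dissociation of $D$ must be applied to the translated set $\Gamma+\lambda$ rather than to $\Gamma$ itself, absorbing the character contribution of $T_1,T_2$ into the shift. This is precisely why the definition of $\Gamma$-dissociated given in the previous section quantifies uniformly over $\lambda\in\widehat{G}$. Beyond this observation there is no substantive obstacle: the remainder is a routine binomial sum, and the factor $4^m$ is essentially free once one accepts $4^k$ as the price paid for the $k$ non-dissociated elements of $S$.
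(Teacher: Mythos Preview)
Your proof is correct and follows essentially the same route as the paper: decompose $S$ into a $\Gamma$-dissociated part and a remainder of size $k$, split each pair $(\Delta_1,\Delta_2)$ according to its intersection with the remainder, absorb the remainder's contribution into a translate $\lambda$, and invoke $\Gamma$-dissociation. You are in fact slightly more careful than the paper, retaining the factor $2^{-k_1-k_2}$ and the disjointness constraint on $T_1,T_2$ to obtain the sharper intermediate bound $2^{t_1+t_2}(9/4)^k$, whereas the paper simply bounds by $2^{t_1+t_2}\cdot 2^{2k}$; both reduce to $4^{k+m}$.
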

\begin{proof}
Let $S=S_0\sqcup S_1$ where $S_0$ is $\Gamma$-dissociated and $\abs{S_1}=k$. By separating the contribution from the subsets of $S_1$ we obtain the estimate 
\begin{align*}
E^\sharp_{t_1,t_2}(S,\Gamma)
&=\sum_{\substack{\Delta_1\in\binom{S}{t_1},\Delta_2\in\binom{S}{t_2}\\ \Delta_1\cap\Delta_2=\emptyset}}
\Gamma\brac{\sum_{\gamma\in\Delta_1}\gamma-\sum_{\gamma'\in\Delta_2}\gamma'}\\
&\leq \sum_{\substack{0\leq r_1\leq t_1\\ 0\leq r_2\leq t_2}}
\binom{k}{r_1}\binom{k}{r_2}
\sup_{\lambda}\sum_{\substack{\Delta_1\in\binom{S_0}{t_1-r_1},\Delta_2\in\binom{S_0}{t_2-r_2}\\ \Delta_1\cap\Delta_2=\emptyset}}
\Gamma\brac{\sum_{\gamma\in\Delta_1}\gamma-\sum_{\gamma'\in\Delta_2}\gamma'+\lambda}.
\end{align*}
Since $S_0$ is $\Gamma$-dissociated, however, the inner summand is bounded above by $2^{t_1+t_2}$ and the lemma follows.
\end{proof}
For the main result of this section we need to convert a conclusion about the restricted additive energy to the full additive energy, for which the following lemma will suffice.
\begin{lemma}\label{en2}
For any $\Gamma\subset \widehat{G}$, weight function $\omega:\widehat{G}\to\bbr_+$ and integer $m\geq 2$ we have
\[E_{2m}(\omega,\Gamma)\leq 2^{4m}(m!)^2\norm{\omega}_2^{2m}\sum_{0\leq t_1,t_2\leq m}
\frac{\norm{\omega}_2^{-t_1-t_2}}{((m-t_1)!(m-t_2)!)^{1/2}}\sup_\lambda E^\sharp_{t_1,t_2}(\omega,\Gamma+\lambda).\]
\end{lemma}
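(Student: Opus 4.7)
The plan is to expand $E_{2m}(\omega,\Gamma)$ directly from the definition and partition the sum by the collision pattern of the $2m$-tuple $(\gamma_1,\ldots,\gamma_m,\gamma_1',\ldots,\gamma_m')$. For each tuple, let $c(\gamma)=\abs{\{i:\gamma_i=\gamma\}}$ and $c'(\gamma)=\abs{\{j:\gamma_j'=\gamma\}}$, and set $\Delta_1=\{\gamma:c(\gamma)>c'(\gamma)\}$ and $\Delta_2=\{\gamma:c'(\gamma)>c(\gamma)\}$. The sets $\Delta_1,\Delta_2$ are automatically disjoint; denote their sizes $t_1$ and $t_2$. Cancelling the common multiplicities $\min(c(\gamma),c'(\gamma))$ gives
\[\sum_i\gamma_i-\sum_j\gamma_j'=\sum_{\gamma\in\Delta_1}\gamma-\sum_{\gamma\in\Delta_2}\gamma+\lambda,\]
where $\lambda$ is determined by the residual multiplicities. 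Hence the $\Gamma$-indicator of each tuple matches the configuration appearing in $E^\sharp_{t_1,t_2}(\omega,\Gamma+\lambda)$ for the corresponding shift, and I will pay the price of a $\sup_\lambda$ in order to separate the shift from the rest of the sum.

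The remaining task is to sum the weight $\prod_\gamma\omega(\gamma)^{c(\gamma)+c'(\gamma)}$ together with the ordering multinomials over all configurations giving a fixed pair $(\Delta_1,\Delta_2)$. To match the target form, which carries only one factor of $\omega(\gamma)$ per $\gamma\in\Delta_1\cup\Delta_2$, I would peel off one such factor for each residual element and use the pointwise bound $\omega(\gamma)\leq\norm{\omega}_2$ (valid because $\omega\geq 0$ and $\norm{\omega}_2^2=\sum_\gamma\omega(\gamma)^2$) to absorb the remaining $2m-t_1-t_2$ factors of $\omega$ into $\norm{\omega}_2^{2m-t_1-t_2}$. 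This produces the target power of $\norm{\omega}_2$.

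The square-root factor $((m-t_1)!(m-t_2)!)^{-1/2}$ would emerge from a Cauchy--Schwarz split applied between the two ordering multinomials. After the split, each multinomial can be bounded by $\sqrt{m!/(m-t_j)!}$ up to combinatorial factors stemming from the number of multiplicity compositions (each at most $\binom{m-1}{t_j-1}\leq 2^{m-1}$) and the cancellation data. These residual combinatorial factors, together with the $\binom{m}{t_1}\binom{m}{t_2}\leq 4^m$ choices of which indices occupy residual positions on each side, are absorbed into the prefactor $2^{4m}$.

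The main obstacle is the detailed combinatorial bookkeeping: one must simultaneously keep track of the residual support $(\Delta_1,\Delta_2)$, the residual multiplicities on each side, and the cancellation multiplicities (including those for $\gamma\in\Delta_1\cup\Delta_2$ with both $c,c'$ positive as well as those for $\gamma\notin\Delta_1\cup\Delta_2$), and then carefully sum over all such data while tracking the shift $\lambda$, the multinomial weights, and the surplus $\omega$-factors. The Cauchy--Schwarz step is particularly delicate, since a suboptimal split will either destroy the square-root structure or inflate the prefactor beyond $2^{4m}$.
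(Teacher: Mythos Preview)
Your decomposition has a structural problem that prevents the factorisation you are aiming for. With your definitions, the shift
\[
\lambda=\sum_{\gamma\in\Delta_1}(c(\gamma)-c'(\gamma)-1)\gamma-\sum_{\gamma\in\Delta_2}(c'(\gamma)-c(\gamma)-1)\gamma
\]
depends on the \emph{specific elements} of $\Delta_1$ and $\Delta_2$, not merely on the multiplicity pattern. Consequently, when you sum over pairs $(\Delta_1,\Delta_2)$ of sizes $(t_1,t_2)$, each summand carries its own $\lambda$, and this sum is \emph{not} dominated by $\sup_\lambda E^\sharp_{t_1,t_2}(\omega,\Gamma+\lambda)$: the supremum chooses one $\lambda$ for the whole sum, whereas you need a different one for every term. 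The paper avoids this by instead separating, on each side, the elements of multiplicity exactly $1$ from those of multiplicity at least $2$. The high-multiplicity elements alone determine the shift (they are summed out via the quantity $G_k(\omega)$), and the remaining multiplicity-$1$ elements are genuinely distinct and independent of $\lambda$, so $\sup_\lambda$ cleanly decouples the two pieces. A further short inclusion--exclusion then removes cross-side repetitions to reach $E^\sharp_{t_1,t_2}$.

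There is a second, related gap in your treatment of the surplus weights. Bounding each leftover factor pointwise by $\omega(\gamma)\leq\norm{\omega}_2$ discards the variable $\gamma$ but does not eliminate the \emph{sum} over the possible values of those $\gamma$'s (for example, the fully cancelled elements with $c(\gamma)=c'(\gamma)$ range freely over $\widehat{G}$), so you would be left with an uncontrolled combinatorial factor. What is actually needed is to \emph{sum} the excess weights: this is what the paper does via the estimate $G_k(\omega)\leq \frac{k!}{(\lfloor k/2\rfloor)!}\norm{\omega}_2^{k}$ for tuples in which every element appears at least twice, and via the factor $i!\norm{\omega}_2^{2i}$ for the cross-side overlaps. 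The square-root factors $((m-t_1)!(m-t_2)!)^{-1/2}$ then emerge from the elementary bound $n!/(\lfloor n/2\rfloor!)^2\leq 2(n+1)^{1/2}2^n$ applied inside the resulting sum, rather than from a Cauchy--Schwarz split between the two sides.
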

\begin{proof}
We divide the range of summation of $E_{2m}$ according to the size of the subsets of $\{\gamma_1,\ldots,\gamma_m\}$ and $\{\gamma_1',\ldots,\gamma_m'\}$ consisting of elements that each occur with multiplicity 1. This leads to the upper bound
\begin{equation}\label{eqone}E_{2m}(\omega,\Gamma)\leq \sum_{0\leq l_1,l_2\leq m}G_{m-l_1}(\omega)G_{m-l_2}(\omega)\binom{m}{l_1}\binom{m}{l_2}
\sup_\lambda F_\lambda(l_1,l_2)\end{equation}
where
\[F_\lambda(l_1,l_2)= \sum_{\substack{\gamma_1,\ldots,\gamma_{l_2}'\\ \gamma_i\neq \gamma_j\,\gamma_i'\neq\gamma_j'\,i\neq j}}\omega(\gamma_1)\cdots\omega(\gamma_{l_2}')
\Gamma\brac{\sum_{i=1}^{l_1}\gamma_i-\sum_{j=1}^{l_2}\gamma_{j}'-\lambda}\]
and
\begin{equation}\label{eqtwo}G_k(\omega)=\sum_{\Delta}^*\prod_{\gamma\in\Delta}\omega(\gamma)
\leq \frac{k!}{(\lfloor k/2\rfloor )!}\brac{\sum_{\gamma}\omega(\gamma)^2}^{k/2},\end{equation}
the first sum being restricted to those ordered $k$-tuples $\Delta\in \widehat{G}^k$ where each element occurs with multiplicity at least 2. The sum $F_\lambda(l_1,l_2)$ is almost a renormalised version of the restricted energy $E^\sharp_{l_1,l_2}$ except that it lacks the restriction $\gamma_i\neq \gamma_j'$ for all $1\leq i\leq l_1$ and $1\leq j\leq l_2$. To introduce this we partition $F_\lambda(l_1,l_2)$ according to the number of common elements between the $\gamma_i$ and $\gamma_i'$; thus
\begin{equation}\label{eqthree}F_\lambda(l_1,l_2)\leq \sum_{i=0}^{\min(l_1,l_2)}\binom{l_1}{i}\binom{l_2}{i}i!\norm{\omega}_2^{2i}(l_1-i)!(l_2-i)!E^\sharp_{l_1-i,l_2-i}(\omega,\Gamma+\lambda).\end{equation}
Combining \eqref{eqone}, \eqref{eqtwo} and \eqref{eqthree} and simplifying the expression implies that $E_{2m}(\omega,\Gamma)$ is at most
\[(m!)^2\norm{\omega}_2^{2m}\sum_{0\leq l_1,l_2\leq m}\sum_{i=0}^{\min(l_1,l_2)}\frac{\norm{\omega}_2^{2i-l_1-l_2}}{ i!(\lfloor (m-l_1)/2\rfloor)!(\lfloor (m-l_2)/2\rfloor)!}\sup_\lambda E_{l_1-i,l_2-i}^\sharp(\omega,\Gamma+\lambda).\]
Relabelling $t_1=l_1-i$ and $t_2=l_2-i$ this is at most
\[(m!)^2\norm{\omega}_2^{2m}\sum_{0\leq t_1,t_2\leq m}\norm{\omega}_2^{-t_1-t_2}\sup_\lambda E_{t_1,t_2}^\sharp(\omega,\Gamma+\lambda)f(m,t_1,t_2)\]
where
\[f(m,t_1,t_2)=\sum_{i\geq \max(t_1,t_2)}^{m-\max(t_1,t_2)}\frac{1}{i!(\lfloor (m-t_1-i)/2\rfloor)!(\lfloor (m-t_2-i)/2\rfloor)!}.\]
Finally, a tedious calculation using the elementary inequality $n!/(\lfloor n/2\rfloor!)^2\leq 2(n+1)^{1/2}2^{n}$, valid for all $n\geq 0$, shows that the inner sum is at most $2^{4m}((m-t_1)!(m-t_2)!)^{-1/2}$ and the lemma follows. 
\end{proof}

The final technical lemma of this section provides a relationship between covering and dimension that will be important in the proof of Theorem~\ref{en3}.

\begin{lemma}\label{techlemma}
Let $\Gamma\subset\widehat{G}$ be a symmetric set. If $\Delta\subset \widehat{G}$ has $\Gamma$-dimension $r$ then there is a partition $\widehat{G}=\Lambda_0\sqcup \Lambda_1$ where $\Lambda_0$ is $2r$-covered by $\Gamma$ and for all $\gamma\in\Lambda_1$ the set $\Delta\cup\{\gamma\}$ has $\Gamma$-dimension $r+1$.  
\end{lemma}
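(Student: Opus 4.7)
The plan is to fix a maximal $\Gamma$-dissociated subset $\Delta_0 \subseteq \Delta$, necessarily of size $r$, and set
\[
\Lambda_1 = \{\gamma \in \widehat{G} : \Delta \cup \{\gamma\} \text{ has } \Gamma\text{-dimension } r+1\}, \qquad \Lambda_0 = \widehat{G} \setminus \Lambda_1.
\]
The defining property of $\Lambda_1$ holds by construction, so the whole lemma reduces to showing that $\Lambda_0$ is $2r$-covered by $\Gamma$. I take $\Lambda := \Delta_0 \cup (-\Delta_0)$, which has size at most $2r$, and aim to show $\Lambda_0 \subseteq \Gamma - \Gamma + \langle \Lambda \rangle$.

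Fix $\gamma \in \Lambda_0$; the case $\gamma \in \Delta_0 \subseteq \langle \Lambda \rangle$ is trivial, so assume $\gamma \notin \Delta_0$. Since $\Delta_0$ is dissociated of size $r$ and $\Delta \cup \{\gamma\}$ has dimension exactly $r$, the set $\Delta_0 \cup \{\gamma\}$ of size $r+1$ must fail to be $\Gamma$-dissociated. Unpacking this yields some $k \geq 1$ and $\lambda \in \widehat{G}$ such that the count $N'_k(\lambda)$ of disjoint pairs of subsets of $\Delta_0 \cup \{\gamma\}$ of total size $k$ with signed sum in $\Gamma + \lambda$ strictly exceeds $2^k$. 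Splitting by whether $\gamma$ participates,
\[
N'_k(\lambda) = N_k(\lambda) + N_{k-1}(\lambda - \gamma) + N_{k-1}(\lambda + \gamma),
\]
where $N_j(\mu)$ counts the analogous pairs inside $\Delta_0$. Dissociativity of $\Delta_0$ bounds these three terms by $2^k$, $2^{k-1}$, $2^{k-1}$ respectively, so the excess forces both $N_k(\lambda) \geq 1$ and $N_{k-1}(\lambda + \varepsilon \gamma) \geq 1$ for some sign $\varepsilon \in \{-1, +1\}$.

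Each positive count provides disjoint subsets of $\Delta_0$ representing the corresponding shift modulo $\Gamma$; using the symmetry of $\Gamma$, this produces $g_1, g_2 \in \Gamma$ and coefficients $\varepsilon^{(1)}_\delta, \varepsilon^{(2)}_\delta \in \{-1,0,1\}$ satisfying
\[
\lambda = g_1 + \sum_{\delta \in \Delta_0} \varepsilon^{(1)}_\delta \delta \qquad \text{and} \qquad \lambda + \varepsilon \gamma = g_2 + \sum_{\delta \in \Delta_0} \varepsilon^{(2)}_\delta \delta.
\]
Subtracting these two relations and multiplying by $\varepsilon$ cancels $\lambda$ and yields $\gamma \in \Gamma - \Gamma + \sum_{\delta \in \Delta_0} \eta_\delta \delta$ with each $\eta_\delta \in \{-2,-1,0,1,2\}$. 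Decomposing $\eta_\delta = \varepsilon_\delta - \varepsilon'_\delta$ with $\varepsilon_\delta, \varepsilon'_\delta \in \{-1,0,1\}$ lets me rewrite $\eta_\delta \delta = \varepsilon_\delta \delta + \varepsilon'_\delta (-\delta)$, so the sum lies in $\langle \Lambda \rangle$, completing the argument.

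The only real obstacle is the uncontrolled shift $\lambda$ that accompanies the failure of dissociativity. The trick is to exploit both the level-$k$ count (which constrains $\lambda$ modulo $\Gamma + \langle \Delta_0 \rangle$) and the level-$(k-1)$ count (which constrains $\lambda + \varepsilon \gamma$ similarly), and then to subtract these to cancel $\lambda$ at the cost of doubling the set of signed generators from $r$ to $2r$.
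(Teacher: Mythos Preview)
Your proof is correct and follows essentially the same approach as the paper: both fix a maximal $\Gamma$-dissociated $\Delta_0\subseteq\Delta$ and show that any $\gamma$ for which $\Delta_0\cup\{\gamma\}$ fails to be $\Gamma$-dissociated lies in $\Gamma-\Gamma+\langle\Delta_0\rangle+\langle\Delta_0\rangle$. Your counting identity $N'_k(\lambda)=N_k(\lambda)+N_{k-1}(\lambda-\gamma)+N_{k-1}(\lambda+\gamma)$ is a slightly tidier packaging of the paper's case split on whether $\gamma$ appears with sign $0$, $+1$, or $-1$, and your explicit choice $\Lambda=\Delta_0\cup(-\Delta_0)$ makes transparent what the paper leaves implicit, but the substance is identical.
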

\begin{proof}
By hypothesis we can decompose $\Delta$ as $\Delta_0\sqcup\Delta_1$ where $\Delta_0$ is $\Gamma$-dissociated and $\abs{\Delta_0}=r$. Let $\Delta'$ be the set of all $\gamma\in\widehat{G}$ such that $\Delta_0\cup\{\gamma\}$ is not $\Gamma$-dissociated. We claim that if we let $\Lambda_0=\Delta'\cup\Delta_0$ and $\Lambda_1=\widehat{G}\backslash\Lambda_0$ then this is a suitable decomposition. 

Firstly, let $\gamma\in\Lambda_1$. By construction the set $\Delta_0\cup\{\gamma\}$ is $\Gamma$-dissociated, and hence $\Delta\cup\{\gamma\}$ has $\Gamma$-dimension at least $r+1$ by definition, since $\abs{\Delta_0\cup\{\gamma\}}=r+1$. It remains to show that $\Lambda_0$ is $2r$-covered by $\Gamma$; for this, it suffices to show that
\[\Delta_0\cup \Delta'\subset \Gamma-\Gamma+\langle \Delta_0\rangle +\langle \Delta_0\rangle.\] 
This is obvious for $\Delta_0$. Let $\gamma\in\Delta'$. By construction $\Delta_0\cup\{\gamma\}$ is not $\Gamma$-dissociated, and hence there exists $k\geq 1$ and $\lambda\in\widehat{G}$ such that there are more than $2^k$ many triples $(\epsilon,\Delta_1',\Delta_2')$ such that $\epsilon\in\{-1,0,1\}$, the sets $\Delta_1'$ and $\Delta_2'$ are disjoint subsets of $\Delta_0$ with $\abs{\Delta_1'\cup\Delta_2'}+\abs{\epsilon}=k$, and
\[\epsilon \gamma + \sum_{\gamma_1'\in\Delta_1'}\gamma_1'-\sum_{\gamma_2'\in\Delta_2'}\gamma_2'\in \Gamma+\lambda.\]
If there exists at least one such triple with $\epsilon=0$ and at least one with $\epsilon\neq0$ then it is easy to check that this implies that $\gamma\in \Gamma-\Gamma+\langle \Delta_0\rangle-\langle \Delta_0\rangle$ as required. If $\epsilon\equiv 0$ for all such triples then this contradicts the $\Gamma$-dissociativity of $\Delta_0$. Hence we can assume that $\epsilon\in\{-1,1\}$ for all such triples; this is clearly impossible for $k=1$, and for $k>1$ we observe that by the pigeonhole principle there are strictly more than $2^{k-1}$ such triples with identical $\epsilon$. This, however, is another contradiction to the $\Gamma$-dissociativity of $\Delta_0$, considering the translate $\Gamma+\lambda-\epsilon\gamma$. Thus $\gamma\in \Gamma-\Gamma+\langle \Delta_0\rangle-\langle \Delta_0\rangle$ as required, and the proof is complete.
\end{proof}

The following theorem is crucial, and uses random sampling to prove a hereditary version of our earlier intuition: namely, if a set is such that every large subset is not efficiently covered then we must have particularly small additive energy. The argument is a variant on that used in \cite[Section 5]{BaKa:2012}. There, however, they only wish to bound the $8$-fold additive energy, whereas for our purposes we shall need to deal with the $2m$-fold additive energy where $m\to\infty$ as $N\to\infty$ (for our applications we shall in fact take $m\approx (\log \log N)^{1+o(1)}$), and hence we have taken care to make the dependence on $m$ explicit.

We treat the constants in this argument, as in the rest of this paper, quite crudely; it is certainly possible to improve them, but such improvements would have a negligible effect on the main results.
\begin{theorem}\label{en3}
Let $\Gamma\subset\widehat{G}$ be a symmetric set and $\omega:\widehat{G}\to\bbr_+$. Let $m\geq 2$ and $d\geq n\geq 2$ be such that $m\leq d/4$ and $\norm{\omega}_2\leq m^{1/2}d^{-1}\norm{\omega}_1$. Then either there is a finite set $\Delta\subset\widehat{G}$ such that
\[\sum_{\gamma\in\Delta}\omega(\gamma)\geq
\frac{n}{d}\norm{\omega}_1\]
and $\Delta$ is $2d$-covered by $\Gamma$, or
\[E_{2m}(\omega,\Gamma)\leq 2^{13m+6n}m^{2m}d^{-2m}\norm{\omega}_1^{2m}.\]
\end{theorem}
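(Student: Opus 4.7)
The plan is to prove the contrapositive: assuming that every finite $2d$-covered subset $\Delta\subset\widehat{G}$ satisfies $\sum_{\gamma\in\Delta}\omega(\gamma)<(n/d)\norm{\omega}_1$, I would derive the claimed upper bound on $E_{2m}(\omega,\Gamma)$. The overall strategy combines the three preceding lemmas with a random sampling step that converts the weighted quantity $E^\sharp_{t_1,t_2}(\omega,\Gamma+\lambda)$ into the set-valued quantity handled by Lemma~\ref{en1}.

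First, by Lemma~\ref{en2} it suffices to bound $\sup_\lambda E^\sharp_{t_1,t_2}(\omega,\Gamma+\lambda)$ for $0\le t_1,t_2\le m$. Using the hypothesis $\norm{\omega}_2\le m^{1/2}d^{-1}\norm{\omega}_1$ together with Stirling bookkeeping of the combinatorial factors in Lemma~\ref{en2}, a bound of the form
\[E^\sharp_{t_1,t_2}(\omega,\Gamma+\lambda)\ll 2^{O(n)}\cdot d^{-(t_1+t_2)}\norm{\omega}_1^{t_1+t_2}\]
for every admissible $(t_1,t_2,\lambda)$ would, after summation over $(t_1,t_2)$, deliver the asserted $2^{13m+6n}m^{2m}d^{-2m}\norm{\omega}_1^{2m}$.

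Second, fix $(t_1,t_2,\lambda)$ and sample a random set $\Delta\subset\widehat{G}$ by including each $\gamma$ independently with probability $p_\gamma$ proportional to $\omega(\gamma)$, scaled so that $\bbe[\abs{\Delta}]$ is of order $d$. Unfolding definitions shows that $\bbe\bigl[E^\sharp_{t_1,t_2}(\Delta,\Gamma+\lambda)\bigr]$ agrees with $E^\sharp_{t_1,t_2}(\omega,\Gamma+\lambda)$ up to a single scalar factor depending only on the sampling probabilities, so any upper bound on the expectation transfers back to the weighted quantity. To bound the expectation, observe that if a realisation $\Delta$ has $\Gamma$-dimension $r\le d$ then Lemma~\ref{techlemma} produces a $2d$-covered $\Lambda_0\supset\Delta$, and the standing assumption forces $\omega(\Delta)\le\omega(\Lambda_0)<(n/d)\norm{\omega}_1$. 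By tuning $p_\gamma$ using the $\ell^2$ hypothesis so that $\bbe[\omega(\Delta)]$ comfortably exceeds $(n/d)\norm{\omega}_1$, a Chernoff-type estimate bounds the probability of this ``low-dimension'' event by roughly $2^{-6n}$, which is the source of the $2^{6n}$ factor. On the complementary event Lemma~\ref{en1} supplies $E^\sharp_{t_1,t_2}(\Delta,\Gamma+\lambda)\le 4^{k+m}$ with $k=\abs{\Delta}-\dim(\Delta)<\abs{\Delta}-d$, and averaging $4^k$ against the Bernoulli distribution of $\abs{\Delta}$ (whose moment generating function is controlled via the $\ell^1$/$\ell^2$ hypotheses) yields the required bound.

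The main obstacle will be calibrating $p_\gamma$ so that $\Delta$ is simultaneously small enough for the exponential bound $4^{k+m}$ of Lemma~\ref{en1} to be useful, yet of enough expected $\omega$-weight that being trapped inside a $2d$-covered $\Lambda_0$ is a genuinely improbable event. The hypothesis $\norm{\omega}_2\le m^{1/2}d^{-1}\norm{\omega}_1$ is precisely what reconciles these competing demands: it forces $\omega$ to be flat enough on a set of size of order $d^2/m$ that random sampling produces a $\Delta$ of controlled cardinality whose $\omega$-mass is well-concentrated. The rest is careful combinatorial bookkeeping to extract the explicit constant $2^{13m+6n}$.
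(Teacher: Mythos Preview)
Your overall architecture---random sampling to pass from $E^\sharp(\omega,\cdot)$ to $E^\sharp(S,\cdot)$, then Lemma~\ref{en1} for the latter and Lemma~\ref{en2} to reassemble---is exactly the paper's. The gap is in how you control the probability that the random sample has small $\Gamma$-dimension.

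With Bernoulli sampling $p_\gamma=c\,\omega(\gamma)$ and $c$ chosen so that $\bbe\abs{\Delta}\approx d$ (i.e.\ $c\approx d/\norm{\omega}_1$), one has $\bbe[\omega(\Delta)]=c\norm{\omega}_2^2\approx d\norm{\omega}_2^2/\norm{\omega}_1$. The hypothesis gives only an \emph{upper} bound $\norm{\omega}_2\le m^{1/2}d^{-1}\norm{\omega}_1$, so $\bbe[\omega(\Delta)]\le (m/d)\norm{\omega}_1$, and it may be arbitrarily smaller (take $\omega$ uniform on an enormous set). Hence there is no reason for $\bbe[\omega(\Delta)]$ to exceed $(n/d)\norm{\omega}_1$, let alone comfortably, and the Chernoff step cannot bound $\mathbb{P}\bigl(\omega(\Delta)<(n/d)\norm{\omega}_1\bigr)$ as you require. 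A second, related problem: on your complementary event you know only $\dim(\Delta)>d$, so $k=\abs{\Delta}-\dim(\Delta)<\abs{\Delta}-d$, and for a Bernoulli set of mean $d$ one has $\bbe\bigl[4^{\abs{\Delta}-d}\bigr]\le 4^{-d}\prod_\gamma(1+3p_\gamma)\le(e^3/4)^d$, which is far too large when $d\gg n$.

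The paper avoids both issues by sampling \emph{sequentially}: draw $d$ elements one at a time, each with law $\omega$, producing $S$ of size at most $d$. The covering hypothesis is invoked not on the final $S$ but at each step: if after $d'$ draws the current set $S'$ has $\Gamma$-dimension $r\le d$, Lemma~\ref{techlemma} partitions $\widehat G=\Lambda_0\sqcup\Lambda_1$ with $\Lambda_0$ $2r$-covered (hence $\omega(\Lambda_0)\le n/d$ by assumption) and every $\gamma\in\Lambda_1$ strictly increasing the dimension. Thus each draw fails to raise $\dim$ with probability at most $n/d$, giving $\mathbb{P}(\dim S\le d-k)\le\binom{d}{k}(n/d)^k\le n^k/k!$. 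Summing against the bound $4^{k+m}$ of Lemma~\ref{en1} yields $\bbe E^\sharp_{t_1,t_2}(S,\Gamma+\lambda)\le 4^m e^{4n}$ directly, with no separate complementary-event analysis needed. The missing idea in your sketch is this increment-by-increment use of the covering assumption; the one-shot containment $\Delta\subset\Lambda_0$ for the final sample does not extract enough from it.
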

\begin{proof}
Without loss of generality we may suppose that $\norm{\omega}_1=1$. We first observe that either we are in the first case, or every subset $\Delta\subset \widehat{G}$ which is $2d$-covered by $\Gamma$ satisfies $\sum_{\gamma\in\Delta}\omega(\gamma)\leq nd^{-1}$, which we shall assume henceforth.

Let $S\subset\widehat{G}$ be a random set of size at most $d$ chosen by selecting $d$ elements of $\widehat{G}$ at random, where we choose $\gamma\in\widehat{G}$ with probability $\omega(\gamma)$. We claim that for $k\geq 0$ the set $S$ has $\Gamma$-dimension $d-k$ with probability at most $n^k/k!$.

For suppose we have selected $d'\leq d$ elements of $S$, say $S'$, and suppose that $S'$ has $\Gamma$-dimension $r$. By Lemma~\ref{techlemma} we can partition $\widehat{G}=\Lambda_0\sqcup\Lambda_1$ such that $\Lambda_0$ is $2d$-covered by $\Gamma$ and for all $\gamma\in\Lambda_1$ the set $S'\cup\{\gamma\}$ has $\Gamma$-dimension at least $r+1$. Thus, in our model, 
\[\mathbb{P}(\dim(S'\cup\{\gamma\})\leq \dim(S'))\leq \sum_{\gamma\in\Lambda_0}\omega(\gamma)\leq \frac{n}{d},\]
since $\Lambda_0$ is $2d$-covered by $\Gamma$. From this estimate, combined with the trivial observations that the empty set has $\Gamma$-dimension $0$ and that $\Gamma$-dimension is non-decreasing, it follows that the probability that $S$ has $\Gamma$-dimension $d-k$ is at most the probability that $k$ events with probability at most $n/d$ occur in $d$ independent trials, which is at most 
\[\binom{d}{k}n^kd^{-k}\leq n^k/k!	\]
as required. By Lemma~\ref{en1} it follows that for all $\lambda\in\widehat{G}$ and integers $t_1,t_2\leq m$ we have
\[\bbe E_{t_1,t_2}^\sharp(S,\Gamma+\lambda)\leq 4^m\sum_{k=0}^\infty\frac{(4n)^k}{k!}= 4^me^{4n}.\]
Let $1\leq k\leq 2m$. For any distinct $\gamma_1,\ldots,\gamma_k\in \widehat{G}$ the probability that $\gamma_1,\ldots,\gamma_k\in S$ is at least
\[k!\binom{d}{k}\omega(\gamma_1)\cdots\omega(\gamma_k)\brac{1-\sum_{i=1}^k\omega(\gamma_i)}^{d-k}.\]
Since $k\leq d/2$ we have $k!\binom{d}{k}\geq (d/2)^k$. Furthermore, by the Cauchy-Schwarz inequality 
\[\sum_{i=1}^k\omega(\gamma_i)\leq (2m)^{1/2}\norm{\omega}_2\leq 2md^{-1}\leq 1/2,\]
so that the second factor is at least
\[\exp\brac{-d\sum_{i=1}^k\omega(\gamma_i)}\geq e^{-2m}.\]
It follows that the probability that $\gamma_1,\ldots,\gamma_k\in S$ is at least $2^{-5m}d^{k}\omega(\gamma_1)\cdots\omega(\gamma_k)$. By linearity of expectation, assuming $t_1+t_2\leq m$,
\[\bbe E_{t_1,t_2}^\sharp(S,\Gamma+\lambda)\geq 2^{-5m}d^{t_1+t_2}E_{t_1,t_2}^\sharp(\omega,\Gamma+\lambda),\]
and so, for all $\lambda\in\widehat{G}$ and $0\leq t_1,t_2\leq m$,
\[E^\sharp_{t_1,t_2}(\omega,\Gamma+\lambda)\leq 2^{7m}e^{4n}d^{-t_1-t_2}.\]
From Lemma~\ref{en2} it follows that
\[E_{2m}(\omega,\Gamma)\leq 2^{11m}e^{4n}m!\norm{\omega}_2^{2m}\brac{\sum_{0\leq t\leq m} \frac{(m!)^{1/2}(\norm{\omega}_2^{-1}d^{-1})^t}{((m-t)!)^{1/2}}}^2.\]
For brevity let $r=\norm{\omega}_2^{-2}d^{-2}\geq m^{-1}$. By the Cauchy-Schwarz inequality the inner factor is at most
\[(m+1)\sum_{0\leq t\leq m} \frac{m!}{(m-t)!}r^t\leq (m+1)\sum_{0\leq t\leq m}(erm)^t\leq (m+1)^2(erm)^m,\]
say. In particular
\[E_{2m}(\omega,\Gamma)\leq 2^{13m}e^{4n}m^{2m}\norm{\omega}_2^{2m}r^m=2^{13m}e^{4n}m^{2m}d^{-2m},\]
and the lemma follows.
\end{proof}

\section{Structure in Spectra}
The results in the previous section are extremely general, and can be used to deduce facts about the dimensions of arbitrary sets from lower bounds on their additive energy. We shall use them to derive a structural result about the large spectrum of a function $f:G\to\bbc$, and for this we require a lower bound on the additive energy of such spectra. A suitable lower bound was provided by \cite{Sh:2008}, who showed that if $A\subset G$ with density $\alpha=\abs{A}/N$ and $\Delta\subset\Delta_\eta(A)$ then $E_{2m}(\Delta,\{0\})\gg \eta^{2m}\alpha\abs{\Delta}^{2m}$. A simpler proof of this fact was given in \cite{Sh:2009}; the following is a simple generalisation of this proof. 

\begin{lemma}\label{sp}
Let $\epsilon\in[0,1]$. For any $B\subset G$ and $\eta\in[0,1]$ let $f:B\to\bbc$ and $\omega:\widehat{G}\to\bbr_+$ be supported on $\widehat{G}$. Then for all integers $m\geq 1$
\[E_{2m}(\omega,\Delta_\epsilon(B))\geq \norm{\omega}_1^{2m}\brac{
\brac{\eta\frac{\norm{f}_1}{\norm{f}_{2m/(2m-1)}\abs{B}^{1/2m}}}^{2m}-\epsilon}.\]
\end{lemma}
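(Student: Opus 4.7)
The plan is to run a Shkredov-style duality argument (as in \cite{Sh:2009}), adapted to accommodate both the presence of a general weight $f$ on $B$ (in place of $\mathbf{1}_B$) and the replacement of the trivial frequency $\{0\}$ by the larger test set $\Delta_\epsilon(B)$. The statement as printed appears to be missing the hypothesis $\mathrm{supp}(\omega)\subset\Delta_\eta(f)$ (the clause ``supported on $\widehat{G}$'' looks like a misprint for ``supported on $\Delta_\eta(f)$''), and I will assume this hereafter: no $\eta^{2m}$ factor could appear on the right-hand side otherwise.

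First, for each $\gamma\in\mathrm{supp}(\omega)$ pick a unimodular phase $\alpha(\gamma)\in\bbc$ with $\alpha(\gamma)\widehat{f}(\gamma)=\abs{\widehat{f}(\gamma)}$, and form the dual-side exponential sum
\[U(x)=\sum_\gamma\omega(\gamma)\alpha(\gamma)\gamma(x).\]
Swapping orders of summation in $\sum_{x\in B}f(x)U(x)$ and using the spectral hypothesis on $\omega$ yields
\[\sum_{x\in B}f(x)U(x)=\sum_\gamma\omega(\gamma)\alpha(\gamma)\widehat{f}(\gamma)=\sum_\gamma\omega(\gamma)\abs{\widehat{f}(\gamma)}\ge\eta\norm{f}_1\norm{\omega}_1.\]
Applying Hölder with conjugate exponents $2m/(2m-1)$ and $2m$ to $f$ (supported on $B$) and $U\mathbf{1}_B$ then converts this into
\[\sum_{x\in B}\abs{U(x)}^{2m}\ge\brac{\eta\frac{\norm{f}_1\norm{\omega}_1}{\norm{f}_{2m/(2m-1)}}}^{2m}.\]

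Second, expand $\abs{U}^{2m}=U^m\overline{U}^m$ and exchange the orders of summation to obtain
\[\sum_{x\in B}\abs{U(x)}^{2m}=\sum_{\gamma_1,\ldots,\gamma_m'}\omega(\gamma_1)\cdots\omega(\gamma_m')\,\alpha(\gamma_1)\cdots\overline{\alpha(\gamma_m')}\,\widehat{B}\brac{\sum_{i=1}^m\gamma_i-\sum_{j=1}^m\gamma_j'}.\]
Since $\abs{\alpha(\gamma)}=1$, bounding termwise in absolute value eliminates the phases and replaces the inner factor by $\abs{\widehat{B}(\cdot)}$. Splitting the resulting sum according to whether $\sum_i\gamma_i-\sum_j\gamma_j'\in\Delta_\epsilon(B)$, and using $\abs{\widehat{B}}\le\abs{B}$ on-spectrum and $\abs{\widehat{B}}<\epsilon\abs{B}$ off-spectrum, produces the upper bound $\abs{B}\,E_{2m}(\omega,\Delta_\epsilon(B))+\epsilon\abs{B}\,\norm{\omega}_1^{2m}$. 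Combining this with the Hölder lower bound above and dividing through by $\abs{B}$ gives precisely the claimed inequality.

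The only mildly delicate point is keeping track of the phases $\alpha(\gamma)$ through the expansion of $\abs{U}^{2m}$, but these are unimodular and so disappear the moment one takes absolute values inside the frequency sum; everything else is routine Fourier manipulation.
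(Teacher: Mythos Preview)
Your proof is correct and essentially identical to the paper's: the paper defines $\chi$ via $\widehat{\chi}(\gamma)=\overline{c_\gamma}\omega(\gamma)$ with $c_\gamma\widehat{f}(\gamma)=\lvert\widehat{f}(\gamma)\rvert$, so that $N\overline{\chi}$ is your $U$, and then runs exactly the same Parseval/H\"older/expand-and-split argument. Your diagnosis of the misprint is also right---the paper uses ``whenever $\omega(\gamma)\neq 0$ we have $\lvert\widehat{f}(\gamma)\rvert\ge\eta\lVert f\rVert_1$'' in the proof, i.e.\ the intended hypothesis is $\mathrm{supp}(\omega)\subset\Delta_\eta(f)$.
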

To recover the bound of Shkredov we set $B=G$ and let $\epsilon\to0$. 
\begin{proof}
Let $\chi$ be defined by letting $\widehat{\chi}(\gamma)=\overline{c_\gamma}\omega(\gamma)$, where $c_\gamma\widehat{f}(\gamma)=\abs{\widehat{f}(\gamma)}$. By construction whenever $\omega(\gamma)\neq 0$ we have $\abs{\widehat{f}(\gamma)}\geq \eta\norm{f}_1$, whence
\[\sum_xf(x)\overline{\chi(x)}=N^{-1}\sum_\gamma \widehat{f}(\gamma)\overline{\widehat{\chi}(\gamma)}=N^{-1}\sum_\gamma\omega(\gamma)\abs{\widehat{f}(\gamma)}\geq N^{-1}\eta\norm{f}_1\norm{\omega}_1.\]
By H\"{o}lder's inequality, however,
\[\brac{\sum_x f(x)\overline{\chi(x)}}^{2m}
\leq \brac{\sum_x \abs{f(x)}^{2m/(2m-1)}}^{2m-1}\brac{\sum_x B(x)\abs{\chi(x)}^{2m}}.\]
It remains to note that, by the triangle inequality,
\begin{align*}
\sum_x B(x)\abs{\chi(x)}^{2m}
&=N^{-2m}\sum_xB(x)\abs{\sum_\gamma c_\gamma\omega(\gamma)\gamma(x)}^{2m}\\
&\leq
N^{-2m}\sum_{\gamma_1,\ldots,\gamma_m'}\omega(\gamma_1)\cdots\omega(\gamma_m')
\abs{\widehat{B}(\gamma_1+\cdots+\gamma_m
-\gamma_1'-\cdots-\gamma_m')}.
\end{align*}
It follows that
\begin{align*}
\eta^{2m}\norm{f}_1^{2m}\norm{\omega}_1^{2m}
&\leq 
\norm{f}_{2m/(2m-1)}^{2m}\sum_{\gamma_1,\ldots,\gamma_m'}\omega(\gamma_1)\cdots\omega(\gamma_m')
\abs{\widehat{B}(\gamma_1+\cdots-\gamma_m')}\\
&\leq 
\norm{f}_{2m/(2m-1)}^{2m}\brac{\abs{B}E_{2m}(\omega,\Delta_\epsilon(B))
+\epsilon\abs{B}\norm{\omega}_1^{2m}},
\end{align*}
and the proof is complete.
\end{proof}

Finally, we can prove the technical heart of our argument, the aforementioned alternative to Chang's lemma. Again, for our application we need a fairly general statement, but at first glance the reader should take $B=G$ and any $\epsilon>0$, so that $\Delta_\epsilon(B)=\{0\}$. 
\begin{theorem}\label{then}
Suppose that $f:B\to \bbc$ and let $\alpha=\norm{f}_1/\norm{f}_\infty\abs{B}$. Let $\omega:\widehat{G}\to\bbr_+$ be supported on $\Delta_\eta(f)$ and let $0\leq \epsilon\leq \exp(-8\mathcal{L}(\eta)\mathcal{L}(\alpha))$. There is a set $\Delta'\subset\Delta_\eta(f)$ such that
\[\sum_{\gamma\in\Delta'}\omega(\gamma)\geq 2^{-12}\eta\norm{\omega}_1\]
and $\Delta'$ is $2^{14}\mathcal{L}(\alpha)\eta^{-1}$-covered by $\Delta_\epsilon(B)$.
\end{theorem}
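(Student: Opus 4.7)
The plan is to apply Theorem~\ref{en3} with $\Gamma = \Delta_\epsilon(B)$ to a suitable truncation of $\omega$, chosen so that the covering alternative directly produces $\Delta'$ while the energy alternative is ruled out by the lower bound in Lemma~\ref{sp}. The natural parameters are $m \asymp \mathcal{L}(\alpha)$, $n \asymp \mathcal{L}(\alpha)$ and $d \asymp \mathcal{L}(\alpha)\eta^{-1}$, chosen so that $n/d \asymp \eta$ (to produce the $2^{-12}\eta\norm{\omega}_1$ mass bound in the first alternative) and $2d \leq 2^{14}\mathcal{L}(\alpha)\eta^{-1}$ (to produce the claimed covering dimension). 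Note that $\Delta_\epsilon(B)$ is symmetric since $\mathbf{1}_B$ is real.

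Next I would handle the hypothesis $\norm{\omega}_2 \leq m^{1/2}d^{-1}\norm{\omega}_1$ of Theorem~\ref{en3}, which may fail if $\omega$ is too concentrated. Let $K = 2^{14}\mathcal{L}(\alpha)\eta^{-1}$ and let $\Lambda$ be the set of $K$ indices at which $\omega$ is largest. If $\sum_{\gamma\in\Lambda}\omega(\gamma) \geq 2^{-12}\eta\norm{\omega}_1$, we may take $\Delta' = \Lambda$: this is trivially $K$-covered by $\Delta_\epsilon(B)$ and carries the required mass. Otherwise define $\omega' := \omega\cdot\mathbf{1}_{\widehat{G}\setminus\Lambda}$; we still have $\norm{\omega'}_1 \geq \tfrac12\norm{\omega}_1$, but now every value of $\omega'$ is at most $\sum_{\gamma\in\Lambda}\omega(\gamma)/K \ll \eta^2\norm{\omega}_1/\mathcal{L}(\alpha)$, so the bound $\norm{\omega'}_2^2 \leq \norm{\omega'}_\infty\norm{\omega'}_1$ yields the hypothesis of Theorem~\ref{en3} at our parameters.

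Now apply Theorem~\ref{en3} to $\omega'$. If its covering alternative fires, we obtain a subset $\Delta \subset \mathrm{supp}(\omega') \subset \Delta_\eta(f)$ with $\sum_{\gamma\in\Delta}\omega(\gamma) \geq \sum_{\gamma\in\Delta}\omega'(\gamma) \geq (n/d)\norm{\omega'}_1 \geq 2^{-12}\eta\norm{\omega}_1$ that is $2d$-covered by $\Delta_\epsilon(B)$; this is our $\Delta'$. If instead the energy alternative fires, Lemma~\ref{sp} applied to $\omega'$, after using the interpolation $\norm{f}_{2m/(2m-1)} \leq \norm{f}_\infty^{1/(2m)}\norm{f}_1^{1-1/(2m)}$ to simplify the constant, gives the lower bound
\[E_{2m}(\omega',\Delta_\epsilon(B)) \geq \norm{\omega'}_1^{2m}\bigl(\eta^{2m}\alpha - \epsilon\bigr).\]
The hypothesis $\epsilon \leq \exp(-8\mathcal{L}(\eta)\mathcal{L}(\alpha))$ is calibrated exactly to ensure $\epsilon \leq \tfrac12\eta^{2m}\alpha$ for our value of $m$, so the effective lower bound $\tfrac12\eta^{2m}\alpha\norm{\omega'}_1^{2m}$ strictly exceeds the energy alternative's upper bound $2^{13m+6n}m^{2m}d^{-2m}\norm{\omega'}_1^{2m}$ for our chosen $m,n,d$, a contradiction.

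The main obstacle is the delicate bookkeeping of absolute constants: the $2^{13m+6n}$ factor in Theorem~\ref{en3} makes everything tight, so the truncation (which must cost only an $O(1)$ factor in $\omega$-mass), the parameter choice (so that the first alternative simultaneously delivers the claimed mass and covering dimension), and the energy comparison (which must strictly favour Lemma~\ref{sp}'s lower bound) must all fit together with no more than constant-factor slack. No new conceptual ingredient is required beyond Theorem~\ref{en3} and Lemma~\ref{sp}.
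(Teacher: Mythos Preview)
Your approach is correct and follows the same overall architecture as the paper (apply Theorem~\ref{en3} with $\Gamma=\Delta_\epsilon(B)$, rule out the energy alternative via Lemma~\ref{sp}, and handle separately the case where $\omega$ is too concentrated for the $\ell^2$ hypothesis of Theorem~\ref{en3}). The one genuine difference lies in how you treat that concentrated case. The paper splits on whether $\norm{\omega}_2\geq 2^{-12}\mathcal{L}(\alpha)^{-1/2}\eta\norm{\omega}_1$: if so, it builds $\Delta'$ by \emph{random sampling}, including each $\gamma$ independently with probability $2^{13}\eta^{-1}\mathcal{L}(\alpha)\omega(\gamma)$, and uses Chernoff to bound $\abs{\Delta'}$ and Markov to retain mass; if not, it applies Theorem~\ref{en3} directly to $\omega$. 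You instead perform a deterministic top-$K$ truncation: either the $K$ heaviest atoms already carry the required mass (and then $\abs{\Delta'}\leq K$ gives the covering trivially), or removing them forces $\norm{\omega'}_\infty$ small enough that $\norm{\omega'}_2^2\leq\norm{\omega'}_\infty\norm{\omega'}_1$ yields the hypothesis of Theorem~\ref{en3}. Your route is slightly more elementary in that it avoids the probabilistic step entirely; the paper's route avoids having to pass to $\omega'$ and track the (harmless) loss $\norm{\omega'}_1\geq(1-2^{-12}\eta)\norm{\omega}_1$ through the final constant-matching. Two small points worth tidying: Theorem~\ref{en3} only promises $\Delta\subset\widehat{G}$, so you should explicitly intersect with $\mathrm{supp}(\omega')$ before asserting $\Delta'\subset\Delta_\eta(f)$; and $K$ should be taken as $\lfloor 2^{14}\mathcal{L}(\alpha)\eta^{-1}\rfloor$ so that it is an integer.
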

\begin{proof}
Without loss of generality we may suppose that $\norm{\omega}_1=1$. Suppose first that $\norm{\omega}_2\geq 2^{-12}\mathcal{L}(\alpha)^{-1/2}\eta$ and let $\Delta'$ be a random set selected by including $\gamma\in\widehat{G}$ independently with probability $2^{13}\eta^{-1}\mathcal{L}(\alpha)\omega(\gamma)$. Then, if $\Delta'$ is this randomly chosen set we have, by Chernoff's inequality, that $\abs{\Delta'}\leq 2^{14}\eta^{-1}\mathcal{L}(\alpha)$ with probability at least $7/8$, say, and 
\[\bbe \sum_{\gamma\in\Delta'}\omega(\gamma)\geq 2^{13}\eta^{-1}\mathcal{L}(\alpha)\norm{\omega}_2^2\geq 2^{-11}\eta,\]
and hence by Markov's inequality we have $\sum_{\gamma\in\Delta'}\omega(\gamma)\geq 2^{-12}\eta$ with probability at least $1/2$, and the lemma follows. 

Otherwise, we let $n=m=\mathcal{L}(\alpha)$ and $d=\lfloor 2^{12}\eta^{-1}m\rfloor$ and apply Lemmata~\ref{sp} and \ref{en3}. By the above we can suppose that $\norm{\omega}_2\leq 2^{-12}\mathcal{L}(\alpha)^{-1/2}\eta\leq m^{1/2}d^{-1}$, as is necessary for the application of Lemma~\ref{en3}. Lemma~\ref{sp} implies that
\[E_{2m}(\omega;\Delta_\epsilon(B))\geq \brac{\eta\frac{\norm{f}_1}{\norm{f}_{2m/(2m-1)}\abs{B}^{1/2m}}}^{2m}-\epsilon.\]
We have the trivial bound $\norm{f}_{2m/(2m-1)}\leq \norm{f}_\infty^{1/2m}\norm{f}_1^{1-1/2m}$, and hence if $\epsilon\leq \eta^{2m}\alpha/2$ then 
\[E_{2m}(\omega;\Delta_\epsilon(B))\geq \eta^{2m}\alpha/2.\]
By Lemma~\ref{en3} either there is a set $\Delta'$ such that
\[\sum_{\gamma\in\Delta'}\omega(\gamma)\geq \frac{m}{d}\]
and $\Delta'$ is $2d$-covered by $\Delta_\epsilon(B)$, or
\[\eta^{2m}\alpha\leq 2^{19m+1}m^{2m}d^{-2m}.\]
In particular, 
\[d\leq 2^{10}m\eta^{-1}\alpha^{-1/2m},\]
which contradicts our initial choice of $d$ and $m$, and the proof is complete.
\end{proof}

Theorem~\ref{thch} is a special case of this; to obtain it one simply sets $B=G$, lets $\epsilon\to 0$ and takes $\omega$ as the characteristic function of $\Delta_\eta(f)$.

\section{The density increment}
In this section we show how to use the structural result Theorem~\ref{then} to obtain the usual density increment lemma which can be iterated to yield the main theorems. The density increment strategy originated with \cite{Ro:1953} but has seen various technical simplifications since which we incorporate here. Roughly speaking, the idea is to show that if $A$ does not contain the expected number of solutions to \eqref{main} then it has large Fourier coefficients, and this information can be translated into finding some group-like $B\subset G$ such that $\abs{A\cap B}/\abs{B}$ is larger than the expected $\abs{A}/N$, and the argument can then be iterated until it halts due to the trivial bound $\abs{A\cap B}\leq \abs{B}$.

The first lemma is the standard conversion of $L^2$-information on the balanced function of a set to a density increment, a technique first exploited for Roth's theorem by \cite{He:1987} and \cite{Sz:1990}.

\begin{lemma}\label{de1}
Let $f:B\to [0,1]$ and $\mathbf{f}=f-\alpha B$, where $\alpha=\norm{f}_1/\abs{B}$. Suppose that 
\[\sum_{\gamma\in\Gamma}\abs{\widehat{\mathbf{f}}(\gamma)}^2\geq \nu\alpha\norm{f}_1 N.\]
Then if $B'$ is a symmetric set such that for every $\gamma\in\Gamma$ we have
\[\abs{\widehat{B'}(\gamma)}\geq 2^{-1}\abs{B},\]
and furthermore $\abs{(2B'+B)\backslash B}\leq 2^{-4}\nu\alpha\abs{B}$ then
\[\norm{f\ast B'}_\infty\geq (1+2^{-3}\nu)\alpha\abs{B'}.\]
\end{lemma}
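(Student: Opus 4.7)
The plan is to turn the $L^{2}$-energy hypothesis on $\widehat{\mathbf{f}}|_{\Gamma}$ into an $L^{2}$ excess for $f\ast B'$, which by pigeonholing will produce a pointwise density increment. Write $g=f\ast B'$, $h=B\ast B'$, and $\psi=\mathbf{f}\ast B'$, so that $g=\alpha h+\psi\ge 0$ and $\sum_{x}g(x)=\norm{f}_{1}\abs{B'}$. From $\sum g^{2}\le(\max g)\sum g$ one gets
\[\max_x(f\ast B')(x)\;\ge\;\frac{\norm{f\ast B'}_{2}^{2}}{\norm{f}_{1}\abs{B'}},\]
so everything reduces to the single $L^{2}$ estimate $\norm{f\ast B'}_{2}^{2}\ge(1+2^{-3}\nu)\alpha\norm{f}_{1}\abs{B'}^{2}$.

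To prove this, I would expand $\norm{g}_{2}^{2}=\alpha^{2}\norm{h}_{2}^{2}+2\alpha\langle h,\psi\rangle+\norm{\psi}_{2}^{2}$ and estimate the three pieces separately. The $\norm{\psi}_{2}^{2}$ term is precisely where the hypothesis enters: Parseval and the control $\abs{\widehat{B'}(\gamma)}\ge\tfrac12\abs{B'}$ on $\Gamma$ give $\norm{\psi}_{2}^{2}\ge\tfrac14\nu\alpha\norm{f}_{1}\abs{B'}^{2}$. For the leading term, Cauchy--Schwarz applied on the support of $h$ (which is $B+B'\subseteq 2B'+B$, of size $\le(1+2^{-4}\nu\alpha)\abs{B}$ by the closure hypothesis) gives $\norm{h}_{2}^{2}\ge(1-O(\nu))\abs{B}\abs{B'}^{2}$, and therefore $\alpha^{2}\norm{h}_{2}^{2}\ge(1-O(\nu))\alpha\norm{f}_{1}\abs{B'}^{2}$.

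The main technical obstacle is the cross term $2\alpha\langle h,\psi\rangle$, which could a priori cancel the gain from $\norm{\psi}_{2}^{2}$. The key trick is that symmetry of $B'$ unfolds it as
\[\langle h,\psi\rangle\;=\;\sum_{z}\mathbf{f}(z)\,(B\ast B'\ast B')(z),\]
and since $\mathbf{f}$ has mean zero we may replace $(B\ast B'\ast B')(z)$ by $(B\ast B'\ast B')(z)-\abs{B'}^{2}$, a nonpositive quantity vanishing on $\{z:z+2B'\subseteq B\}$. Using $\abs{\mathbf{f}}\le 1$ and recognising the remaining sum as counting triples $(y,u,v)\in B\times B'\times B'$ with $y+u+v\notin B$, one arrives at
\[\abs{\langle h,\psi\rangle}\;\le\;\sum_{w\in(2B'+B)\setminus B}(B\ast B'\ast B')(w)\;\le\;\abs{(2B'+B)\setminus B}\cdot\abs{B'}^{2}\;\le\;2^{-4}\nu\norm{f}_{1}\abs{B'}^{2}.\]
This is precisely why the closure hypothesis is calibrated at scale $2^{-4}\nu\alpha\abs{B}$: anything coarser would destroy the gain, and anything finer is not needed.

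Combining the three estimates, the main term contributes $(1-O(\nu))\alpha\norm{f}_{1}\abs{B'}^{2}$, the cross term costs at most $2^{-3}\nu\alpha\norm{f}_{1}\abs{B'}^{2}$, and the $\psi$-term contributes $2^{-2}\nu\alpha\norm{f}_{1}\abs{B'}^{2}$, so that $\norm{f\ast B'}_{2}^{2}\ge(1+2^{-3}\nu)\alpha\norm{f}_{1}\abs{B'}^{2}$ after routine bookkeeping of constants; substituting into the pigeonhole inequality of the first paragraph yields $\norm{f\ast B'}_{\infty}\ge(1+2^{-3}\nu)\alpha\abs{B'}$.
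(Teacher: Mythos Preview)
Your proof is correct and follows essentially the same route as the paper: Parseval on $\widehat{\mathbf f}$ for the $\psi$-term, the closure hypothesis to control the cross term via $\abs{(2B'+B)\setminus B}$, and the pigeonhole $\norm{g}_2^2\le\norm{g}_\infty\norm{g}_1$ to pass to $L^\infty$. The only difference is organisational: the paper expands $\norm{\mathbf f\ast B'}_2^2$ and rearranges, so that it needs only the trivial \emph{upper} bound $\norm{B\ast B'}_2^2\le\abs{B}\abs{B'}^2$ rather than your Cauchy--Schwarz \emph{lower} bound on $\norm{h}_2^2$; this is why it hits the constant $2^{-3}$ on the nose, whereas your bookkeeping actually yields $2^{-3}\nu-2^{-4}\nu\alpha$ (and tacitly uses $B+B'\subset 2B'+B$, i.e.\ $0\in B'$).
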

\begin{proof}
By hypothesis we have
\[\sum_{\gamma}\abs{\widehat{\mathbf{f}}(\gamma)}^2\abs{\widehat{B'}(\gamma)}^2\geq 2^{-2}\nu\alpha\norm{f}_1\abs{B'}^2N.\]
In particular, 
\[\norm{\mathbf{f}\ast B'}_2^2=N^{-1}\sum_{\gamma}\abs{\widehat{\mathbf{f}}(\gamma)}^2\abs{\widehat{B'}(\gamma)}^2\geq 2^{-2}\nu\alpha\norm{f}_1\abs{B'}^2.\]
Expanding out the $L^2$ norm we obtain that
\[\sum_x \abs{f\ast B'(x)}^2+\alpha^2\norm{B\ast B'}_2^2-2\alpha\langle B\ast B',f\ast B'\rangle\geq 2^{-2}\nu\alpha\norm{f}_1\abs{B'}^2.\]
By hypothesis
\begin{align*}
\abs{\langle B\ast B',f\ast B'\rangle-\norm{f}_1\abs{B'}^2}
&\leq \abs{B'}^2\sup_{x,y\in B'}\sum_{z\not\in B}f(z+x-y)\\
&\leq \abs{B'}^2\abs{(2B'+B)\backslash B}\\
&\leq 2^{-4}\nu\norm{f}_1\abs{B'}^2.
\end{align*}
It follows that
\[\norm{f\ast B'}_2^2\geq (1+2^{-3}\nu)\alpha\norm{f}_1\abs{B'}^2.\]
The left hand side is at most $\norm{f}_1\abs{B'}\norm{f\ast B'}_\infty$ and the lemma follows.
\end{proof}

The second lemma shows that if a set has small dimension then control on only a few elements gives control on the whole set.

\begin{lemma}\label{de2}
If $\Delta\subset\widehat{G}$ is $d$-covered by $\Gamma$ then then there is $\Lambda\subset\widehat{G}$ of size at most $d$ such that if $B$ has $(4d)^{-1}$-control of $\Lambda$ and $\Gamma(1/8)$ then for every $\gamma\in\Delta$ we have
\[\abs{\widehat{B}(\gamma)}\geq 2^{-1}\abs{B}.\]
\end{lemma}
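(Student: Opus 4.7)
The plan is to take $\Lambda$ to be exactly the set arising from the hypothesis that $\Delta$ is $d$-covered by $\Gamma$, so that by definition $\Delta\subset \Gamma-\Gamma+\langle\Lambda\rangle$ and $|\Lambda|\leq d$. Given any $\gamma\in\Delta$, I would fix a decomposition $\gamma=\gamma_1-\gamma_2+\sum_{\lambda\in\Lambda}\epsilon_\lambda\lambda$ with $\gamma_1,\gamma_2\in\Gamma$ and $\epsilon_\lambda\in\{-1,0,1\}$, and then bound $|1-\gamma(x)|$ uniformly in $x\in B$.

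The standard trick is that for unimodular complex numbers $a,b$ we have $|1-ab|\leq |1-a|+|1-b|$, so this inequality telescopes over products of characters. Applying this to the factorisation $\gamma(x)=\gamma_1(x)\overline{\gamma_2(x)}\prod_{\lambda}\lambda(x)^{\epsilon_\lambda}$ gives
\[\abs{1-\gamma(x)}\leq \abs{1-\gamma_1(x)}+\abs{1-\gamma_2(x)}+\sum_{\lambda\in\Lambda}\abs{1-\lambda(x)^{\epsilon_\lambda}}.\]
The first two terms are each at most $1/8$ by the assumed $1/8$-control of $\Gamma$, and each of the at most $d$ terms in the sum is at most $(4d)^{-1}$ by the assumed $(4d)^{-1}$-control of $\Lambda$ (using $|1-\lambda(x)^{-1}|=|1-\lambda(x)|$ for unimodular $\lambda(x)$). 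Summing gives $|1-\gamma(x)|\leq 1/4+1/4=1/2$.

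Since $|\gamma(x)|=1$, the identity $|1-\gamma(x)|^2=2-2\operatorname{Re}\gamma(x)$ then yields $\operatorname{Re}\gamma(x)\geq 7/8$ for every $x\in B$, and hence
\[\abs{\widehat{B}(\gamma)}\geq \operatorname{Re}\sum_{x\in B}\gamma(x)\geq \tfrac{7}{8}\abs{B}\geq \tfrac{1}{2}\abs{B},\]
which is the required conclusion. There is no real obstacle here; the only subtlety is to make sure the accounting in the triangle inequality gives enough slack, which is exactly why the hypothesis prescribes the $(4d)^{-1}$ scale on $\Lambda$ (so the $d$ error terms contribute at most $1/4$) and the coarser $1/8$ scale on $\Gamma$ (only two terms appear).
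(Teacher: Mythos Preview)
Your proof is correct and follows essentially the same route as the paper: take $\Lambda$ from the covering hypothesis, telescope the triangle inequality over the decomposition $\gamma=\gamma_1-\gamma_2+\sum\epsilon_\lambda\lambda$ to get $\abs{1-\gamma(x)}\leq 1/2$, and conclude. The only cosmetic difference is that the paper finishes with $\abs{\widehat{B}(\gamma)-\abs{B}}\leq\sum_{x\in B}\abs{1-\gamma(x)}\leq\tfrac12\abs{B}$ rather than passing through $\operatorname{Re}\gamma(x)\geq 7/8$, but this is the same argument.
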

\begin{proof}
By hypothesis there is some $\Lambda\subset\widehat{G}$ such that $\abs{\Lambda}\leq d$ and 
\[\Delta\subset \Gamma-\Gamma+ \langle \Lambda\rangle.\]
Let $\gamma\in\Delta$, so that there exists some $\gamma_0,\gamma_1\in \Gamma$ and $\epsilon\in \{-1,0,1\}^{\Lambda}$ such that
\[\gamma=\gamma_0-\gamma_1 +\sum_{\lambda\in\Lambda}\epsilon_\lambda\lambda.\]
By the triangle inequality, for every $x\in B$ we have
\[\abs{1-\gamma(x)}\leq \abs{1-\gamma_0(x)}+\abs{1-\gamma_1(x)}+d\sup_{\lambda\in\Lambda}\abs{1-\lambda(x)}\leq 1/2.\]
It follows that
\[\abs{\widehat{B}(\gamma)-\abs{B}}\leq \sum_{x\in B}\abs{1-\gamma(x)}\leq 2^{-1}\abs{B}\]
and the conclusion follows.
\end{proof}	

Finally, recalling Theorem~\ref{then} we see that to make use of Lemma~\ref{de2} we will need to have good control on the spectrum of a given set. The following lemma gives a more useful criterion to ensure this.
\begin{lemma}\label{de3}
If $c>0$ and $\abs{(B+B')\backslash B}\leq c\epsilon\abs{B}$ then $B'$ has $2c$-control of $\Delta_\epsilon(B)$.
\end{lemma}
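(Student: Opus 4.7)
The plan is to unpack both sides of the inequality Fourier-analytically and translate the hypothesis on $|(B+B')\setminus B|$ into a bound on the symmetric difference $|B\triangle(B+x)|$ for $x\in B'$.

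Fix $x\in B'$ and $\gamma\in\Delta_\epsilon(B)$; the goal is to show $|1-\gamma(x)|\leq 2c$. The key identity is the shift identity
\[
(1-\gamma(x))\widehat{B}(\gamma)=\sum_{z}\bigl(B(z)-B(z-x)\bigr)\gamma(z),
\]
obtained by writing $\gamma(x)\widehat{B}(\gamma)=\sum_y B(y)\gamma(y+x)=\sum_z B(z-x)\gamma(z)$. Taking absolute values and applying the triangle inequality bounds the right-hand side by $|B\triangle(B+x)|$.

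Next I would show $|B\triangle(B+x)|\leq 2c\epsilon|B|$. Since $|B|=|B+x|$, the symmetric difference equals $2|(B+x)\setminus B|$. But $x\in B'$ gives $B+x\subset B+B'$, so $(B+x)\setminus B\subset (B+B')\setminus B$, and the hypothesis yields $|(B+x)\setminus B|\leq c\epsilon|B|$. Combining with the lower bound $|\widehat{B}(\gamma)|\geq \epsilon\|B\|_1=\epsilon|B|$ coming from $\gamma\in\Delta_\epsilon(B)$, the shift identity gives $|1-\gamma(x)|\epsilon|B|\leq 2c\epsilon|B|$, hence $|1-\gamma(x)|\leq 2c$ as required.

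There is no real obstacle here: the lemma is a short manipulation once one writes down the correct shift identity. The only mild subtlety is recognising that the one-sided hypothesis on $(B+B')\setminus B$ controls the full symmetric difference (via the measure-preserving nature of translation), which is what converts the single term $|(B+x)\setminus B|$ into the factor of $2$ appearing in the conclusion.
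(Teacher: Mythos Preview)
Your proof is correct and essentially identical to the paper's: both use the shift identity $(1-\gamma(x))\widehat{B}(\gamma)=\sum_{y\in B}\gamma(y)-\sum_{y\in B+x}\gamma(y)$, bound the right-hand side by $|B\triangle(B+x)|=2|(B+x)\setminus B|\leq 2|(B+B')\setminus B|$, and divide through by $|\widehat{B}(\gamma)|\geq\epsilon|B|$. The paper presents the same computation more tersely, without explicitly naming the symmetric difference, but the argument is the same.
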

\begin{proof}
Choose $\gamma$ such that $\abs{\widehat{B}(\gamma)}\geq \epsilon\abs{B}$ and $x\in B'$. Then
\begin{align*}
\abs{1-\gamma(x)}\abs{B}
&\leq \epsilon^{-1}\abs{\sum_{y\in B}\gamma(y)-\sum_{y\in B+x}\gamma(y)}\\
&\leq 2\epsilon^{-1}\abs{(B+B')\backslash B}\\
&\leq 2c\abs{B}.
\end{align*}
\end{proof}

We now combine all our tools thus far to prove the following efficient density increment theorem.

\begin{theorem}\label{thde}
Let $B,B'\subset G$ be any sets. Let $A\subset B$ with density $\alpha=\abs{A}/\abs{B}$ and $f:B'\to[-1,1]$ with density $\tau=\norm{f}_1/\abs{B}$. Let $\mathbf{A}(x)=A(x)-\alpha B(x)$ and suppose that
\[\sum_\gamma\abs{\widehat{f}(\gamma)}\abs{\widehat{\mathbf{A}}(\gamma)}^2=\nu\alpha\norm{f}_1\abs{A} N.\]
Then there is a finite set $\Lambda\subset\widehat{G}$ of size
\[d=\abs{\Lambda}\leq 2^{16}\mathcal{L}(\tau)(\nu\alpha)^{-1}\]
such that if a symmetric set $B''\subset G$ has $(4d)^{-1}$-control of $\Lambda$,
\[\abs{(2B''+B)\backslash B}\leq 2^{-17}\nu\alpha\abs{B},\]
and
\[\abs{(B''+B')\backslash B'}\leq 2^{-4}\exp(-2^4\mathcal{L}(\tau)\mathcal{L}(\nu\alpha))\abs{B'}\]
then there exists $x$ such that
\[\abs{(A-x)\cap B''}\geq (1+2^{-16}\nu)\alpha\abs{B''}.\]
\end{theorem}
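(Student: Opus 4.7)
The plan is to dyadically decompose the $f$-spectrum, apply Theorem~\ref{then} on each shell separately, combine the resulting covered subsets, and then feed the output into the density-increment machinery of Lemmas~\ref{de1}--\ref{de3}. I partition $\widehat{G}$ into the level spectra $\tilde{\Delta}_{\eta_j}(f)$ with $\eta_j=2^{-j}$; on each shell $\abs{\widehat{f}(\gamma)}<2\eta_j\norm{f}_1$, so the hypothesis rewrites as $\sum_j \eta_j T_j \geq \tfrac{1}{2}\nu\alpha\abs{A}N$, where $T_j:=\sum_{\gamma\in\tilde{\Delta}_{\eta_j}(f)}\abs{\widehat{\mathbf{A}}(\gamma)}^2$. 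Since $\sum_j T_j\leq N\abs{A}$ by Parseval, the contribution of shells with $\eta_j$ below a fixed small multiple of $\nu\alpha$ is negligible, so only those with $\eta_j\gtrsim\nu\alpha$ matter.

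To each such shell I apply Theorem~\ref{then} with weight $\omega=\abs{\widehat{\mathbf{A}}}^2\mathbf{1}_{\tilde{\Delta}_{\eta_j}(f)}$ and a single $\epsilon$ chosen small enough for the smallest relevant $\eta_j\sim\nu\alpha$ (which is precisely what forces the exponential smallness hypothesis on $\abs{(B''+B')\backslash B'}$ via Lemma~\ref{de3}). This produces $\Delta'_j$ with $\sum_{\gamma\in\Delta'_j}\abs{\widehat{\mathbf{A}}(\gamma)}^2\geq 2^{-12}\eta_j T_j$, which is $d_j$-covered by $\Delta_\epsilon(B')$ with $d_j\leq 2^{14}\mathcal{L}(\tau)\eta_j^{-1}$. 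Taking $\Delta'=\bigsqcup_j \Delta'_j$ (disjoint by construction) I obtain the crucial estimate $\sum_{\gamma\in\Delta'}\abs{\widehat{\mathbf{A}}(\gamma)}^2\gtrsim \nu\alpha\abs{A}N$, with no logarithmic loss, while the covering sets combine into a single $\Lambda$ of size $d\leq\sum_j d_j \leq 2^{14}\mathcal{L}(\tau)\sum_{\eta_j\gtrsim\nu\alpha}\eta_j^{-1}\leq 2^{16}\mathcal{L}(\tau)(\nu\alpha)^{-1}$, the geometric series being dominated by its largest term.

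Now Lemma~\ref{de2}, applied to this covering, turns the bound on $\abs{\Lambda}$ into the assertion that, provided $B''$ has $(4d)^{-1}$-control of $\Lambda$ together with $1/8$-control of $\Delta_\epsilon(B')$, we have $\abs{\widehat{B''}(\gamma)}\geq \abs{B''}/2$ for every $\gamma\in\Delta'$. The required control of $\Delta_\epsilon(B')$ is supplied by Lemma~\ref{de3} from the third additive hypothesis of the theorem, with the constant $\exp(-O(\mathcal{L}(\tau)\mathcal{L}(\nu\alpha)))$ matched to the $\epsilon$ chosen above. Finally Lemma~\ref{de1}, applied to $f=\mathbf{1}_A$ on $B$ with $\Gamma=\Delta'$ and the role of its $B'$ played by our $B''$, converts the accumulated spectral mass (with $\nu_{\mathrm{de}1}\asymp\nu$) together with the remaining hypothesis $\abs{(2B''+B)\backslash B}\leq 2^{-17}\nu\alpha\abs{B}$ into $\norm{\mathbf{1}_A\ast B''}_\infty\geq(1+2^{-16}\nu)\alpha\abs{B''}$, from which the desired $x$ is immediately read off. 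The delicate point is the parameter cascade: $\epsilon$ must be small enough for Theorem~\ref{then} on every admissible shell down to $\eta_j\sim\nu\alpha$, and this is exactly why the third hypothesis on $B''$ is doubly exponential in $\mathcal{L}(\tau)\mathcal{L}(\nu\alpha)$; every other loss is absorbed into the slack between the $2^{-3}\nu$ appearing in Lemma~\ref{de1} and the $2^{-16}\nu$ appearing in the conclusion.
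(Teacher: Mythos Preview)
Your proposal is correct and follows essentially the same route as the paper: dyadically decompose the spectrum of $f$ above the threshold $\eta\sim\nu\alpha$, apply Theorem~\ref{then} shell by shell (the paper uses the weight $\lvert\widehat{f}\rvert\lvert\widehat{\mathbf{A}}\rvert^2$ rather than $\lvert\widehat{\mathbf{A}}\rvert^2$, but on a level set these differ only by a constant), sum the geometric series to bound the total covering dimension, and then invoke Lemmas~\ref{de3}, \ref{de2}, and \ref{de1} in exactly the order and with exactly the parameter-matching you describe. One small remark: Theorem~\ref{then} only guarantees $\Delta'_j\subset\Delta_{\eta_j}(f)$, not $\Delta'_j\subset\tilde{\Delta}_{\eta_j}(f)$, so ``disjoint by construction'' needs the harmless observation that one may intersect each $\Delta'_j$ with its shell without affecting either the weight lower bound or the covering.
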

\begin{proof}
Let $\Delta=\Delta_\eta(f)$ where $\eta=\nu\alpha/2$. In particular,
\[\sum_{\gamma\not\in\Delta} \abs{\widehat{f}(\gamma)}\abs{\widehat{\mathbf{A}}(\gamma)}^2
\leq 2^{-1}\nu\alpha\norm{f}_1\norm{\mathbf{A}}_2^2N
\leq 2^{-1}\nu\alpha \norm{f}_1\abs{A}N.\]
In particular,
\[\sum_{\gamma\in\Delta}\abs{\widehat{f}(\gamma)}\abs{\widehat{\mathbf{A}}(\gamma)}^2\geq 2^{-1}\nu\alpha\norm{f}_1\abs{A}N.\]
We now perform a dyadic decomposition of $\Delta$ into $\Delta_i=\tilde{\Delta}_{2^i\eta}(f)$, and apply Theorem~\ref{then} to each $\Delta_i$ with the weight function
\[\omega(\gamma)=\abs{\widehat{f}(\gamma)}\abs{\widehat{\mathbf{A}}(\gamma)}^2\]
and
\[\epsilon=\exp(-2^4\mathcal{L}(\nu\alpha)\mathcal{L}(\tau))\leq \exp(-8\mathcal{L}(\eta)\mathcal{L}(\tau)).\]
Then, if $\Delta_i'$ is the set provided by Theorem~\ref{then}, we have
\begin{align*}
2^{-12}\sum_{\gamma\in\Delta_i}
\abs{\widehat{f}(\gamma)}\abs{\widehat{\mathbf{A}}(\gamma)}^2
&\leq (2^i\eta)^{-1}\sum_{\gamma\in \Delta_i'}\abs{\widehat{f}(\gamma)}\abs{\widehat{\mathbf{A}}(\gamma)}^2\\
&\leq 2\norm{f}_1\sum_{\gamma\in \Delta_i'}\abs{\widehat{\mathbf{A}}(\gamma)}^2.
\end{align*}
Summing over all $i\geq 0$ implies that
\[\sum_{\gamma\in\Delta'}\abs{\widehat{\mathbf{A}}(\gamma)}^2=\sum_i \sum_{\gamma\in\Delta_i'}\abs{\widehat{\mathbf{A}}(\gamma)}^2\geq 2^{-13}\nu\alpha\abs{A}N,\]
where $\Delta'=\cup_{i\geq 0}\Delta_i'$. Furthermore, since each $\Delta_i'$ is $2^{14}\mathcal{L}(\tau)(2^i\eta)^{-1}$-covered by $\Delta_\epsilon(B')$ it follows that $\Delta'$ is $d$-covered by $\Delta_\epsilon(B')$ where
\[d\leq 2^{14}\mathcal{L}(\tau)\sum_i (2^i\eta)^{-1}\leq 2^{16}\mathcal{L}(\tau)(\nu\alpha)^{-1}.\]
Since $B''$ satisfies $\abs{(B''+B')\backslash B'}\leq 2^{-4}\epsilon\abs{B'}$ it follows from Lemma~\ref{de3} that $B''$ has $2^{-3}$-control of $\Delta_\epsilon(B')$. Hence by Lemma~\ref{de2} there is a $\Lambda$ of size $d\leq 2^{16}\mathcal{L}(\tau)(\nu\alpha)^{-1}$ such that if $B''$ has $(4d)^{-1}$-control of $\Lambda$ then for every $\gamma\in\Delta'$ we have
\[\abs{\widehat{B''}(\gamma)}\geq 2^{-1}\abs{B''}.\]
Since we also have $\abs{(2B''+B)\backslash B}\leq 2^{-17}\nu\alpha\abs{B}$ it follows from Lemma~\ref{de1} that 
\[\abs{(A-x)\cap B''}\geq (1+2^{-16}\nu)\alpha\abs{B''}\]
as required.
\end{proof}

We now at last recall our purpose, which is to study the number of solutions to \eqref{main} in a given set $A$. This count is given by $\langle (c_1\cdot A)\ast (c_2\cdot A),(-c_3\cdot A)\rangle$. It is now straightforward to use Parseval's theorem and Theorem~\ref{thde} to prove the following general density increment theorem, which shows that either this count is large or some dilation of $A$ has increased density on some structured subset.

\begin{theorem}\label{maindi}
Suppose that $B'\subset B\subset G$ and we have $A_1\subset B'$ and $A_2,A_3\subset B$ each with relative densities $\alpha_i$. Let $\alpha=2^{-1}\min(2^{-5},\alpha_1,\alpha_2,\alpha_3)$ and suppose that
\[\abs{(B'+B)\backslash B}\leq 2^{-2}\alpha\abs{B}.\]

Then either
\begin{equation}\label{eqc}\langle A_1\ast A_2,A_3\rangle \geq 2^{-2}\alpha_1\alpha_2\alpha_3\abs{B}\abs{B'}\end{equation}
or there is a finite set $\Lambda\subset\widehat{G}$ of size
\[d=\abs{\Lambda}\leq 2^{19}\mathcal{L}(\alpha)\alpha^{-1}\]
such that if a symmetric set $B''$ has $(4d)^{-1}$-control of $\Lambda$,
\[\abs{(2B''+B)\backslash B}\leq 2^{-19}\alpha\abs{B},\]
and
\[\abs{(B''+B')\backslash B'}\leq 2^{-4}\exp(-2^5\mathcal{L}(\alpha)^2))\abs{B'}\]
then there exists $x\in G$ and $i\in\{2,3\}$ such that
\[\abs{(A_i-x)\cap B''}\geq 
(1+2^{-18})\alpha_i\abs{B''}.\]
\end{theorem}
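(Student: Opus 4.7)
The plan is to show that either the count in \eqref{eqc} holds, or else after extracting an appropriate main term the discrepancy takes the form of a weighted Fourier sum $\sum_\gamma \abs{\widehat{f}(\gamma)}\abs{\widehat{\mathbf{A}_i}(\gamma)}^2$, to which we can apply Theorem~\ref{thde} with $f=A_1$ (viewed as a function on $B'$) and $A=A_i$ for some $i\in\{2,3\}$.

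First I would isolate the main contribution by decomposing $A_3=\alpha_3 B+\mathbf{A}_3$, writing
\[\langle A_1\ast A_2, A_3\rangle = \alpha_3\langle A_1\ast A_2,B\rangle + \langle A_1\ast A_2,\mathbf{A}_3\rangle.\]
The quantity $\langle A_1\ast A_2,B\rangle$ counts pairs $(y,z)\in A_1\times A_2$ with $y+z\in B$. Since $A_1\subset B'$ and $A_2\subset B$, the number of \emph{bad} pairs is at most $\abs{A_1}\abs{(B+B')\backslash B}\leq 2^{-2}\alpha\abs{A_1}\abs{B}$, which (using $\alpha\leq\alpha_2/2$) is at most $2^{-3}\abs{A_1}\abs{A_2}$. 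This shows the main term is at least $(7/8)\alpha_1\alpha_2\alpha_3\abs{B'}\abs{B}$, and hence if \eqref{eqc} fails then $\abs{\langle A_1\ast A_2,\mathbf{A}_3\rangle}$ is at least a constant multiple of $\alpha_1\alpha_2\alpha_3\abs{B'}\abs{B}$. I would then further decompose $A_2=\alpha_2 B+\mathbf{A}_2$ and show, by the same boundary estimate (using that $\mathbf{A}_3$ is mean-zero on $B$), that $\alpha_2\langle A_1\ast B,\mathbf{A}_3\rangle$ is of a strictly smaller order, so that after cancellation $\abs{\langle A_1\ast\mathbf{A}_2,\mathbf{A}_3\rangle}\gtrsim \alpha_1\alpha_2\alpha_3\abs{B'}\abs{B}$.

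Next, Parseval gives $\langle A_1\ast\mathbf{A}_2,\mathbf{A}_3\rangle=N^{-1}\sum_\gamma \widehat{A_1}(\gamma)\widehat{\mathbf{A}_2}(\gamma)\overline{\widehat{\mathbf{A}_3}(\gamma)}$, and the Cauchy--Schwarz inequality
\[\Bigl\lvert\sum_\gamma \widehat{A_1}\widehat{\mathbf{A}_2}\overline{\widehat{\mathbf{A}_3}}\Bigr\rvert^2\leq \Bigl(\sum_\gamma \abs{\widehat{A_1}}\abs{\widehat{\mathbf{A}_2}}^2\Bigr)\Bigl(\sum_\gamma \abs{\widehat{A_1}}\abs{\widehat{\mathbf{A}_3}}^2\Bigr)\]
forces the product of $X_2:=\sum_\gamma\abs{\widehat{A_1}}\abs{\widehat{\mathbf{A}_2}}^2$ and $X_3:=\sum_\gamma\abs{\widehat{A_1}}\abs{\widehat{\mathbf{A}_3}}^2$ to be at least a constant multiple of $(N\alpha_1\alpha_2\alpha_3\abs{B'}\abs{B})^2$. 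Defining $\nu_i$ by $X_i=\nu_i\alpha_i\norm{A_1}_1\abs{A_i}N=\nu_i\alpha_1\alpha_i^2\abs{B'}\abs{B}N$, this becomes $\nu_2\nu_3\gtrsim 1$, so $\max(\nu_2,\nu_3)\gtrsim 1$, and we pick the successful index $i\in\{2,3\}$.

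Finally I would invoke Theorem~\ref{thde} with $f=A_1$ and $A=A_i$; the three hypotheses on $B''$ (control of $\Lambda$, the bound on $\abs{(2B''+B)\backslash B}$, and the bound on $\abs{(B''+B')\backslash B'}$) have been stated in Theorem~\ref{maindi} precisely so that they imply the corresponding hypotheses of Theorem~\ref{thde}. The conclusion is the claimed $(1+2^{-16}\nu)\alpha_i\geq(1+2^{-18})\alpha_i$ density increment on $B''$. The main obstacle is parameter bookkeeping: one must verify carefully that the constants absorb into the stated thresholds, in particular that $\nu\gtrsim 2^{-2}$ translates into the $(1+2^{-18})$ factor, and that $\mathcal{L}(\tau)$ (with $\tau=\norm{A_1}_1/\abs{B}$) and $\mathcal{L}(\nu\alpha)$ combine to fit within the $\mathcal{L}(\alpha)$ and $\mathcal{L}(\alpha)^2$ budgets appearing in the bound on $d$ and in the exponent of the third control hypothesis.
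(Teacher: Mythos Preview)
Your proposal is correct and follows essentially the same route as the paper. The paper expands $\langle A_1\ast\mathbf{A}_2,\mathbf{A}_3\rangle$ directly into $\langle A_1\ast A_2,A_3\rangle$ minus three correction terms, each handled by the single observation $\bigl\lvert\sum_{x\in B}A'\ast A(x)-\lvert A'\rvert\lvert A\rvert\bigr\rvert\leq\lvert A'\rvert\lvert(B'+B)\setminus B\rvert$, whereas you peel off $\mathbf{A}_3$ and then $\mathbf{A}_2$ sequentially; the two computations are algebraically equivalent and lead to the same inequality $\lvert\langle A_1\ast\mathbf{A}_2,\mathbf{A}_3\rangle\rvert\geq 2^{-2}\alpha_1\alpha_2\alpha_3\lvert B\rvert\lvert B'\rvert$, after which the Parseval/Cauchy--Schwarz step and the invocation of Theorem~\ref{thde} are identical. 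Your identification of the only real work as parameter bookkeeping (checking $\nu\geq 2^{-2}$, $\mathcal{L}(\tau)\leq\mathcal{L}(\alpha)$ since $\tau=\alpha_1\geq 2\alpha$, and $\mathcal{L}(\nu\alpha_i)\leq 2\mathcal{L}(\alpha)$) is exactly right.
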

\begin{proof}
Let $\mathbf{A}_i=A_i-\alpha_iB$ for $i=2,3$ and $\mathbf{A}_1=A_1-\alpha_1B'$. We have
\[\sum_x A_1\ast\mathbf{A_2}(x)\mathbf{A}_3(x)=
\sum_x A_1\ast A_2(x)A_3(x)-\alpha_2\sum_{x\in B}A_3\ast A_1^{-}(x)\]
\[-\alpha_3\sum_{x\in B} A_1\ast A_2(x)+\alpha_2\alpha_3\sum_{x\in B}B\ast A_1(x).\]
Observe that if $A'$ is supported on $B'$ and $A$ is supported on $B$ then
\[\abs{\sum_{x\in B}A'\ast A(x)-\abs{A'}\abs{A}}\leq \abs{A'}\abs{(B'+B)\backslash B}\leq 2^{-2}\abs{A'}\abs{A}.\]
In particular, 
\[\sum_x A_1\ast\mathbf{A_2}(x)\mathbf{A}_3(x)\leq 
\sum_x A_1\ast A_2(x)A_3(x)-2^{-1}\alpha_1\alpha_2\alpha_3\abs{B}\abs{B'}.\]
By the triangle inequality either \eqref{eqc} is true or
\[\sum_{\gamma} \abs{\widehat{A_1}(\gamma)}\abs{\widehat{\mathbf{A}_2}(\gamma)}\abs{\widehat{\mathbf{A}_3}(\gamma)}\geq 2^{-2}\alpha_1\alpha_2\alpha_3\abs{B'}\abs{B}N.\]
Applying the Cauchy-Schwarz inequality it follows that 
\[\brac{\sum_\gamma \abs{\widehat{A_1}(\gamma)}\abs{\widehat{\mathbf{A}_2}(\gamma)}^2}
\brac{\sum_\gamma \abs{\widehat{A_1}(\gamma)}\abs{\widehat{\mathbf{A}_3}(\gamma)}^2}\geq 2^{-4}\abs{A_1}^2\alpha_2^2\alpha_3^2\abs{B}^2N^2.\]
It follows that for some $i\in\{2,3\}$ we have
\[\sum_\gamma \abs{\widehat{A_1}(\gamma)}\abs{\widehat{\mathbf{A}_i}(\gamma)}^2
\geq 2^{-2}\abs{A_1}\alpha_i^2\abs{B}N\]
and the conclusion then follows from Theorem~\ref{thde}. 
\end{proof}

\section{Polynomial rings}
For the first demonstration of this method we choose $G=\bbf_q[t]/(p(t))$, where $p(t)$ is some prime of degree $n$, so that $N=q^n$, and $q$ is some odd prime power. We denote this group by $\A_N$. This group is rich enough to mirror the behaviour of the integers, while it has the significant technical advantage of finite characteristic.

We fix some coefficients $c_1,c_2,c_3\in\bbf_q[t]\backslash\{0\}$ such that $c_1+c_2+c_3=0$, and some finite set $A\subset\A_N$. We suppose that $N$ is large enough so that these coefficients are coprime to $p(t)$, and hence each acts faithfully on $\A_N$. We wish to study the function that counts solutions to $c_1x_1+c_2x_2+c_3x_3=0$ with $x_i\in A$, denoted by
\begin{equation}\label{up}\Upsilon_{\mathbf{c}}(A)=\langle (c_1\cdot A)\ast (c_2\cdot A),(-c_3\cdot A)\rangle.\end{equation}
We note that $\Upsilon_{\mathbf{c}}(A)$ is invariant under dilations and translations of $A$.

We recall that both $\A_N$ and $\widehat{\A_N}$ are $\bbf_q$-vector spaces of dimension $n$. Furthermore, any $a\in\A_N$ acts on $\widehat{\A_N}$ by letting $(a\gamma)(x)=\gamma(ax)$. The Bohr space of $\Gamma\subset\widehat{\A_N}$ is defined as 
\[B(\Gamma)=\left\{ x\in\A_N : \gamma(x)=1\textrm{ for all }\gamma\in\Gamma\right\}.\]
If $B$ is a Bohr space then the frequency set of $B$ is
\[[B]=\{ \gamma\in\widehat{\A_N} : \gamma(x)=1\textrm{ for all }x\in B\},\]
and we define the rank of $B$ as the $\bbf_q$-dimension of $[B]$. 
\begin{lemma}
Every Bohr space is an $\bbf_q$-subspace of $\A_N$, and conversely, every $\bbf_q$-subspace of $\A_N$ is a Bohr space. Furthermore if $B$ is a Bohr space then 
\begin{equation}\label{brank}\abs{B}=q^{-\rk(B)}N.\end{equation}
\end{lemma}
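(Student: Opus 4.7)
The plan is to deduce everything from standard Pontryagin duality for the additive group of $\A_N$, combined with the $\bbf_q$-action on $\widehat{\A_N}$ described earlier. Write $H^\perp=\{\gamma\in\widehat{\A_N} : \gamma|_H\equiv 1\}$ for the annihilator of an additive subgroup $H\subset\A_N$, and analogously for subsets of $\widehat{\A_N}$. The two inputs I need are the standard orthogonality relations $\abs{H^\perp}=N/\abs{H}$ and $(H^\perp)^\perp=H$.

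First I would show that every Bohr space $B=B(\Gamma)$ is an $\bbf_q$-subspace. It is automatically an additive subgroup, as an intersection of kernels of additive characters. For closure under multiplication by $a\in\bbf_q$, the key identity is $\gamma(ax)=(a\gamma)(x)$: once $\Gamma$ is $\bbf_q$-closed---which we may arrange by replacing $\Gamma$ by its $\bbf_q$-orbit $\{b\gamma : b\in\bbf_q,\,\gamma\in\Gamma\}$, this being the canonical choice intended by the paper throughout---the assumption $x\in B$ forces $(a\gamma)(x)=1$ for every $\gamma\in\Gamma$, hence $ax\in B$. Conversely, given an $\bbf_q$-subspace $V\subset\A_N$, set $\Gamma=V^\perp$; since $V$ is in particular an additive subgroup, Pontryagin duality gives $V=(V^\perp)^\perp=B(V^\perp)$, exhibiting $V$ as a Bohr space.

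For the size formula, note that the frequency set $[B]$ of a Bohr space $B$ is precisely $B^\perp$. Since $B$ is by the first step an $\bbf_q$-subspace, the $\bbf_q$-action on $\widehat{\A_N}$ preserves $B^\perp$: for $\gamma\in B^\perp$ and $x\in B$, one has $(a\gamma)(x)=\gamma(ax)=1$ because $ax\in B$. Therefore $[B]$ is an $\bbf_q$-subspace of $\widehat{\A_N}$ with $\abs{[B]}=q^{\rk(B)}$ by the definition of rank, and the identity $\abs{B}=N/\abs{[B]}=q^{-\rk(B)}N$ follows from Pontryagin duality.

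The only subtlety I expect is the $\bbf_q$-closure assumption on $\Gamma$ in the first step, which matters only when $q>p$ and amounts to a bookkeeping convention: a single non-$\bbf_q$-invariant $\gamma$ has kernel that is merely an $\bbf_p$-subspace, so one must genuinely close $\Gamma$ under the scalar action before the claim holds. Everything else reduces to routine applications of Pontryagin duality over a finite abelian group.
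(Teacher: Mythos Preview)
Your argument is correct and mirrors the paper's approach: the paper establishes $\abs{V}\,\abs{[V]}=N$ for any additive subgroup $V$ via Parseval's identity---which is precisely the Pontryagin orthogonality relation $\abs{H^\perp}=N/\abs{H}$ you invoke---and then declares that the remaining assertions ``follow easily.'' You are in fact more careful than the paper on one point: when $q>p$ the kernel of a single character is only an $\bbf_p$-subspace, so the first claim of the lemma is literally false unless $\Gamma$ is taken to be $\bbf_q$-invariant, and your proposed convention of closing $\Gamma$ under the scalar action is the natural repair (and harmless for the paper's applications).
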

\begin{proof}
We first claim that if $V$ is a $\bbf_q$-subspace of $\A_N$ then $\abs{V}\abs{[V]}=N$. Since $V$ is closed under addition it is easy to check that if $\gamma\not\in [V]$ then $\widehat{V}(\gamma)=0$, and if $\gamma\in[V]$ then $\widehat{V}(\gamma)=\abs{V}$, and hence by Parseval's identity, 
\[\abs{V}N=\sum_\gamma\abs{\widehat{V}(\gamma)}^2=\sum_{\gamma\in [V]}\abs{\widehat{V}(\gamma)}^2=\abs{[V]}\abs{V}^2\]
which proves our initial claim. The rest of the lemma follows easily.
\end{proof}

\begin{corollary}
If $B$ is a Bohr space and $c\in \A_N\backslash\{0\}$ then $c\cdot B$ is also a Bohr space and
\[\rk(c\cdot B)= \rk(B).\]
\end{corollary}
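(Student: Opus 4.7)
The plan is to reduce both claims to the previous lemma, which characterises Bohr spaces as precisely the $\bbf_q$-subspaces of $\A_N$ and gives $|B| = q^{-\rk(B)}N$.

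First I would verify that $c\cdot B$ is a Bohr space. Since $c\in\bbf_q[t]\setminus\{0\}$ and $N$ is large enough that $c$ is coprime to $p(t)$, multiplication by $c$ is an $\bbf_q$-linear bijection of $\A_N$. As $B$ is a Bohr space it is an $\bbf_q$-subspace by the previous lemma, so its image $c\cdot B$ under this linear bijection is again an $\bbf_q$-subspace, and hence a Bohr space by the converse direction of the lemma.

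For the rank equality, the cleanest route is to compare cardinalities. Since multiplication by $c$ is a bijection on $\A_N$ we have $\abs{c\cdot B}=\abs{B}$. Applying \eqref{brank} to both Bohr spaces gives
\[q^{-\rk(c\cdot B)}N=\abs{c\cdot B}=\abs{B}=q^{-\rk(B)}N,\]
from which $\rk(c\cdot B)=\rk(B)$ follows immediately. (Alternatively, one may argue directly on frequency sets: $\gamma\in[c\cdot B]$ iff $(c\gamma)(x)=1$ for every $x\in B$, so $[c\cdot B]=\{\gamma:c\gamma\in[B]\}$ is the image of $[B]$ under the $\bbf_q$-linear bijection $\gamma\mapsto c^{-1}\gamma$ on $\widehat{\A_N}$, and therefore has the same $\bbf_q$-dimension.)

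There is no real obstacle here; the only thing to be a little careful about is invoking the hypothesis that $c$ is coprime to $p(t)$ so that multiplication by $c$ genuinely acts faithfully (and hence bijectively) on $\A_N$, a point that was already secured in the preceding paragraph of the paper.
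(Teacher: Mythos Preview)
Your argument is correct and is exactly the intended one: the paper states this as an immediate corollary of the preceding lemma, with no separate proof, and you have supplied precisely the two-line deduction (linear bijection preserves subspaces; the size formula \eqref{brank} forces equality of ranks).

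One small correction of phrasing: the corollary takes $c\in\A_N\setminus\{0\}$, not $c\in\bbf_q[t]\setminus\{0\}$. Since $p(t)$ is prime, $\A_N=\bbf_q[t]/(p(t))$ is a field, so every nonzero element is already invertible and multiplication by $c$ is automatically a bijection; you need not invoke any coprimality hypothesis here.
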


Since Bohr spaces are closed under addition our density increment tool, Theorem~\ref{maindi}, takes the following particularly simple form in this setting.

\begin{theorem}\label{fqtdi}
Suppose that $B\subset \A_N$ is a Bohr space and we have $A_1,A_2,A_3\subset B$ with relative densities $\alpha_i=\abs{A_i}/\abs{B}$. Let $\alpha=\min(\alpha_1,\alpha_2,\alpha_3)$. Then either
\[\langle A_1\ast A_2,A_3\rangle \gg \alpha_1\alpha_2\alpha_3\abs{B}^2\]
or there is a Bohr space $B'\subset B$ of rank
\[\rk(B')\leq \rk(B) + O(\mathcal{L}(\alpha)\alpha^{-1})\]
such that there exists $x\in \A_N$ and $i\in\{2,3\}$ such that
\[\abs{(A_i-x)\cap B'})\geq (1+\Omega(1))\alpha_i\abs{B'}.\]
\end{theorem}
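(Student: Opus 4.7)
The plan is to deduce Theorem~\ref{fqtdi} as a direct specialisation of Theorem~\ref{maindi}, exploiting the fact that in $\A_N$ the Bohr spaces are exactly the $\bbf_q$-subspaces, so sumsets involving them are as well-behaved as possible. Concretely, I would apply Theorem~\ref{maindi} with the choice $B'=B$. Since $B$ is an $\bbf_q$-subspace we have $B+B=B$, and the hypothesis $\abs{(B'+B)\backslash B}\leq 2^{-2}\alpha\abs{B}$ is trivially satisfied (both sides are $0$). The $\alpha$ appearing in Theorem~\ref{maindi} only differs from $\min(\alpha_1,\alpha_2,\alpha_3)$ by a bounded multiplicative constant, which is absorbed by the $\ll$, $O$, and $\Omega$ notation in the statement.

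If Theorem~\ref{maindi} returns alternative \eqref{eqc}, then $\langle A_1\ast A_2,A_3\rangle \geq 2^{-2}\alpha_1\alpha_2\alpha_3\abs{B}^2$, which is the first desired conclusion. Otherwise, Theorem~\ref{maindi} produces a set $\Lambda\subset\widehat{\A_N}$ with $\abs{\Lambda}=d\leq 2^{19}\mathcal{L}(\alpha)\alpha^{-1}$, and I must construct a symmetric set $B''$ satisfying the three hypotheses of that theorem and conclude that $B''$ is itself a Bohr space of small rank.

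The construction is the natural one: set
\[B''=B\cap B(\Lambda),\]
the intersection of the Bohr space $B$ with the Bohr space cut out by $\Lambda$. Then $B''$ is itself an $\bbf_q$-subspace of $\A_N$, hence a Bohr space, and in particular symmetric. For any $\gamma\in\Lambda$ and $x\in B''$ we have $\gamma(x)=1$, so $\abs{1-\gamma(x)}=0\leq (4d)^{-1}$, giving the required control of $\Lambda$. Since $B''\subset B$ and $B$ is closed under addition, $2B''+B=B$ and $B''+B'=B''+B=B=B'$, so the two sumset hypotheses are satisfied with zero defect. The rank bound is immediate from the description $[B'']=\bbf_q\text{-span}([B]\cup\Lambda)$, giving $\rk(B'')\leq \rk(B)+\abs{\Lambda}\leq \rk(B)+O(\mathcal{L}(\alpha)\alpha^{-1})$. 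Applying the conclusion of Theorem~\ref{maindi} with this $B''$ yields the required density increment.

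There is no real obstacle: the entire content is in the observation that in the finite-characteristic setting one may upgrade the quantitative $(4d)^{-1}$-control of $\Lambda$ (and the approximate sumset hypotheses) to exact identities, simply by intersecting with the kernel Bohr space of $\Lambda$. This is exactly the technical simplification advertised in the introduction, where subgroups are abundant enough to replace the more delicate Bohr set constructions needed over $\bbz$.
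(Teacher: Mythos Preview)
Your proposal is correct and is precisely the argument the paper has in mind: the paper does not write out a separate proof of Theorem~\ref{fqtdi} but simply remarks that, since Bohr spaces in $\A_N$ are closed under addition, Theorem~\ref{maindi} specialises to this statement. Your choice $B'=B$ and $B''=B\cap B(\Lambda)$, together with the observations that all the sumset defects vanish and that $\rk(B'')\leq \rk(B)+\abs{\Lambda}$, is exactly how this specialisation goes through.
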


We may then iterate this density increment in the usual fashion to obtain a lower bound on $\Upsilon_{\mathbf{c}}(A)$, from which Theorem~\ref{thm2} follows easily.
\begin{theorem}
Suppose that $\mathbf{c}\in(\bbf_q[t]\backslash\{0\})^3$ is such that $c_1+c_2+c_3=0$ and $N$ is sufficiently large, depending only on $\mathbf{c}$. For any $A\subset \A_N$
\[\Upsilon_\mathbf{c}(A)\geq \exp_q\brac{-O_{\mathbf{c}}(\mathcal{L}(\alpha)^2\alpha^{-1})}N^2.\]
\end{theorem}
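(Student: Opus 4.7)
The plan is the standard iterated density-increment argument, using Theorem~\ref{fqtdi} as the engine in place of Meshulam's lemma. Starting with $A_0 := A \subset B_0 := \A_N$ at density $\alpha_0 = \alpha$, I inductively build a nested sequence of Bohr spaces $B_0 \supset B_1 \supset \cdots$ together with affine copies $A_k \subset B_k$ of $A$ whose relative densities satisfy $\alpha_k \geq (1+c_0)^k \alpha$ for some absolute constant $c_0 > 0$. At step $k$, I apply Theorem~\ref{fqtdi} to the triple $c_1 \cdot A_k, c_2 \cdot A_k, -c_3 \cdot A_k$, all of which live inside the common Bohr space $\tilde B_k := c_1 \cdot B_k + c_2 \cdot B_k + c_3 \cdot B_k$. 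Since $\bbf_q$-subspaces are closed under sums and under dilation by units, $\tilde B_k$ is again a Bohr space, $\rk(\tilde B_k) \leq 3\rk(B_k)$, and each of the three dilates has relative density $\gg_\mathbf{c} \alpha_k$ in $\tilde B_k$.

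If the counting alternative of Theorem~\ref{fqtdi} ever fires, then $\langle (c_1 \cdot A_k) \ast (c_2 \cdot A_k), -c_3 \cdot A_k\rangle \gg_\mathbf{c} \alpha_k^3 \abs{\tilde B_k}^2$. Because $c_1+c_2+c_3=0$, the functional $\Upsilon_\mathbf{c}$ is invariant under affine transformations of its argument, so this transfers to $\Upsilon_\mathbf{c}(A) \geq \Upsilon_\mathbf{c}(A_k) \gg_\mathbf{c} \alpha_k^3 \abs{\tilde B_k}^2$ and the iteration halts. Otherwise, Theorem~\ref{fqtdi} furnishes a sub-Bohr-space $B' \subset \tilde B_k$ of rank at most $\rk(\tilde B_k) + O(\mathcal{L}(\alpha_k)\alpha_k^{-1})$ in which some translate of one of the dilates $c_i \cdot A_k$ has relative density $\geq (1+c_0)\alpha_k$. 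Pulling back by $c_i^{-1}$ (which preserves both rank and $\bbf_q$-subspace structure) produces the next $B_{k+1}$ and $A_{k+1}$.

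Since $\alpha_k \leq 1$, the iteration must halt by step $K = O(\mathcal{L}(\alpha))$. Using the crude estimate $\rk(B_K) \leq K \cdot O(\mathcal{L}(\alpha)\alpha^{-1}) = O_\mathbf{c}(\mathcal{L}(\alpha)^2 \alpha^{-1})$ together with \eqref{brank} yields $\abs{\tilde B_K} \geq \exp_q\brac{-O_\mathbf{c}(\mathcal{L}(\alpha)^2\alpha^{-1})} N$, and hence
\[\Upsilon_\mathbf{c}(A) \gg_\mathbf{c} \alpha_K^3 \abs{\tilde B_K}^2 \gg \exp_q\brac{-O_\mathbf{c}(\mathcal{L}(\alpha)^2\alpha^{-1})} N^2,\]
the factor $\alpha_K^3 \geq \alpha^3$ being absorbed into the exponential. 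The main nuisance is coordinating the three dilates into a common Bohr space at each step without the rank exploding; this is tractable in $\A_N$ precisely because $\bbf_q$-subspaces are closed under addition and dilation by units. The hypothesis that $N$ be sufficiently large in terms of $\mathbf c$ is used exactly to guarantee that each $c_i$ is coprime to $p(t)$ and hence acts as a unit on $\A_N$.
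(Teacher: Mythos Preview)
There is a genuine gap in your density claim for $\tilde B_k$. You assert that each dilate $c_i\cdot A_k$ has relative density $\gg_{\mathbf c}\alpha_k$ inside $\tilde B_k:=c_1\cdot B_k+c_2\cdot B_k+c_3\cdot B_k$, but this is not true in general. Since each $c_i\cdot B_k$ is a subspace of the same dimension as $B_k$, their span $\tilde B_k$ satisfies $\dim\tilde B_k\geq\dim B_k$, i.e.\ $\rk(\tilde B_k)\leq\rk(B_k)$ (your bound $\rk(\tilde B_k)\leq 3\rk(B_k)$ points in the wrong direction). Hence $\lvert\tilde B_k\rvert\geq\lvert B_k\rvert$, and the relative density of $c_i\cdot A_k$ in $\tilde B_k$ is $\alpha_k\lvert B_k\rvert/\lvert\tilde B_k\rvert\leq\alpha_k$. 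For a generic subspace $B_k$ of positive rank and coefficients $c_i\notin\bbf_q$, the dilates $c_i\cdot B_k$ are distinct, and already for $\rk(B_k)=1$ one typically has $\tilde B_k=\A_N$, so the density drops by a factor $q^{-\rk(B_k)}$. This simultaneously destroys the density increment (Theorem~\ref{fqtdi} returns $(1+\Omega(1))$ times the density \emph{in $\tilde B_k$}, not $(1+c_0)\alpha_k$) and the rank bookkeeping (the rank increment is $O(\mathcal L(\alpha')/\alpha')$ with $\alpha'\ll\alpha_k$).

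The paper handles this by going in the opposite direction: instead of enlarging to the span of the dilates, it shrinks $B_K$ to a sub-Bohr-space $B'=B(\{1,\ldots,t^{3\ell}\}\cdot\Gamma_K)$ with the property that $a\cdot B'\subset B_K$ for every $a$ of degree at most $3\ell$. Setting $B''=c_1c_2c_3\cdot B'$ one then has $c_i^{-1}\cdot B''\subset B_K$ for each $i$, and an averaging argument locates a single translate $x$ on which $A_K$ has density close to $\alpha_K$ in all three cosets $x+c_i^{-1}\cdot B''$ simultaneously; after dilating by $c_i$ all three sets then live inside the common space $B''$ with comparable densities. The cost of passing to $B'$ is controlled by tracking the frequency sets through the inclusion \eqref{freqcontrol}, which is precisely what yields the rank bound $\rk(B_K)\ll_{\mathbf c}\mathcal L(\alpha)^2\alpha^{-1}$. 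Your argument needs an analogous ``synchronisation'' step; simply summing the dilates does not work.
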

\begin{proof}
Let $\ell=\max_{1\leq i\leq 3}\deg c_i$. Let $K$ be maximal such that there exists a sequence of Bohr spaces
\[\A_N=B_0\supset B_1\supset\cdots\supset B_K,\]
and a sequence of sets $A_i\subset B_i$ with density $\alpha_i=\abs{A_i}/\abs{B_i}$ such that, for $0\leq i< K$, there exists $\Lambda_{i+1}$ of size $O(\mathcal{L}(\alpha)\alpha_i^{-1})$ and $a_i\in \A_N\backslash\{0\}$ such that there exist $\Gamma_i$ with $B_i=B(\Gamma_i)$ and 
\begin{equation}\label{freqcontrol}\Gamma_{i+1}\subset (a_i\cdot \{1,\ldots,t^{3\ell}\}\cdot \Gamma_i)\cup \Lambda_{i+1}\end{equation}
and for some absolute constant $c>0$ we have
\[\alpha_{i+1}\geq\brac{1+c}\alpha_i,\]
and furthermore $\Upsilon_{\mathbf{c}}(A_{i+1})\leq \Upsilon_{\mathbf{c}}(A_i)$. In particular, 
\[1\geq \alpha_K \geq \brac{1+c}^K\alpha,\]
and hence $K\ll \mathcal{L}(\alpha)$. Furthermore it follows from \eqref{freqcontrol} and induction that for $1\leq J\leq K$, if $\Lambda_0=\{0\}$, there exist $a_j'\in\A_N\backslash\{0\}$ for $0\leq j<J$ such that
\[\Gamma_J\subset \Lambda_J\cup \bigcup_{j=0}^{J-1}a_j'\cdot\{1,\ldots,t^{3\ell(J-j)}\}\cdot \Lambda_j,\]
and so in particular
\begin{equation}\label{sizes}\rk(B_K)\ll_\ell K\sum_{j=0}^K\abs{\Lambda_j}\ll_\ell K\mathcal{L}(\alpha)\alpha^{-1}\sum_{i=0}^{K-1}(1+c)^{-i}
\ll_\ell \mathcal{L}(\alpha)^2\alpha^{-1}.\end{equation}
We will demonstrate that 
\begin{equation}\label{eqa}\Upsilon_{\mathbf{c}}(A_K)\gg \alpha^3\abs{B_K}^2\end{equation}
and the theorem follows from \eqref{brank} and \eqref{sizes}.

Let $B'=B(\{1,\ldots,t^{3\ell}\}\cdot \Gamma_K)$. We observe that $a\cdot B'\subset B_K$ for all $a\in\bbf_q[t]$ with $\deg a\leq 3\ell$ and if $B''$ is a Bohr space of the shape $a\cdot B'$ and $a\neq 0$ then
\[[B'']\subset a^{-1}\cdot \{1,\ldots,t^{3\ell}\}\cdot \Gamma_K.\]
Consider $B''=c_1c_2c_3\cdot B'$ and $B^{(i)}= c_i^{-1}\cdot B''$ for $1\leq i \leq 3$. Since $B^{(i)}\subset B_K$ for $1\leq i\leq 3$
\[\sum_{x\in B_K}(A_K\ast \beta^{(1)}+A_K\ast \beta^{(2)}+A_K\ast\beta^{(3)})(x)=3\abs{A_K},\]
and so for some $x\in B_K$
\[A_K\ast \beta^{(1)}(x)+A_K\ast \beta^{(2)}(x)+A_K\ast\beta^{(3)}(x)\geq 3\alpha_K.\]
If for some $1\leq i\leq 3$ we have $A_K\ast \beta^{(i)}(x)\geq \alpha_K(1+c)$ then this contradicts the maximality of $K$, letting $B_{K+1}=B^{(i)}$ and $A_{K+1}=A_K-x$. Otherwise, for $1\leq i\leq 3$, 
\[A_K\ast \beta^{(i)}(x)\geq (1-4c)\alpha_K.\]
If we let $A_i'=c_i\cdot ((A_K-x)\cap B'')$ then we have satisfied the hypotheses required to apply Theorem~\ref{fqtdi}. It follows that either \eqref{eqa} holds or we may choose $B_{K+1}$ to be the sub-Bohr set of $B_K$ with control of $\Lambda$ and $A_{K+1}$ to be a translation of some $A_{i}'$. Since 
\[\abs{\Lambda}= O(\alpha_K^{-1}\mathcal{L}(\alpha))\]
and
\[\alpha_{K+1}\geq (1+\Omega(1))(1-4c)\alpha_K\geq (1+c)\alpha_K\]
if we choose $c$ sufficiently small, this contradicts the maximality of $K$ and thus concludes the proof.
\end{proof}

\section{Integers}
We now prove Theorem~\ref{thm1} by applying these methods to the case $G=\bbz_N$, where $N$ is some large prime. Analogously to the case $G=\A_N$ we fix some coefficients $c_1,c_2,c_3\in \bbz\backslash\{0\}$, all coprime to $N$, such that $c_1+c_2+c_3=0$ and for $A\subset\bbz_N$ define $\Upsilon_{\mathbf{c}}(A)$ as in \eqref{up}. For $\Gamma\subset \widehat{\bbz_N}$ and $\rho:\Gamma\to[0,2]$ we define the Bohr set $B_\rho(\Gamma)$ as 
\[\left\{ n\in\bbz_N : \abs{\gamma(n)-1}<\rho(\gamma)\right\}.\]
We call $\Gamma$ the frequency set of $B$ and $\rho$ the width, and define the rank of $B$ to be the size of $\Gamma$. In fact, when we speak of a Bohr set we implicitly refer to the triple $(\Gamma,\rho,B_\rho(\Gamma))$, since the set $B_{\rho}(\Gamma)$ does not uniquely determine the frequency set or the width. Furthermore, if $\rho:\Gamma\to[0,2]$ and $\rho':\Gamma'\to[0,2]$ then we define $\rho\wedge \rho':\Gamma\cup\Gamma'\to[0,2]$ by
\[(\rho\wedge \rho')(\gamma)=
\begin{cases}
\rho(\gamma)&\textrm{ if }\gamma\in\Gamma\backslash\Gamma'\\
\rho'(\gamma)&\textrm{ if }\gamma\in\Gamma'\backslash\Gamma\textrm{, and }\\
\min(\rho(\gamma),\rho'(\gamma))&\textrm{ if }\gamma\in\Gamma\cap \Gamma'.\end{cases}\]

We no longer have the convenient property of Bohr sets being closed under addition, but \cite{Bo:1999} observed that certain Bohr sets have a weak version of this property suitable for our applications. A Bohr set $B_\rho(\Gamma)$ of rank $d$ is regular if for all $\abs{\kappa}\leq 2^{-6}d^{-1}$ we have 
\[\abs{\abs{B_{\rho(1+\kappa)}(\Gamma)}-\abs{B_\rho(\Gamma)}}\leq 2^6d\abs{\kappa}\abs{B_\rho(\Gamma)}.\]
If $B=B_\rho(\Gamma)$ then we write $B(\lambda)$ for the Bohr set $B_{\lambda\rho}(\Gamma)$. For further discussion of Bohr sets and proofs of the following basic lemmas see, for example, \cite[Chapter 4]{TaVu:2006}.
\begin{lemma}
For any Bohr set $B$ there exists $\lambda\in[1/2,1]$ such that $B(\lambda)$ is regular.
\end{lemma}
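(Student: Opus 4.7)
My plan is to consider the monotone non-decreasing function $F(\lambda) := \log\abs{B(\lambda)}$ on $(0, 2]$ and locate a $\lambda^{*} \in [1/2, 1]$ at which $F$ grows slowly enough in a $\Theta(d^{-1})$-neighbourhood, where $d := \rk(B)$. The strategy has two ingredients: a standard Bohr doubling estimate, and an iterative argument that rules out pervasive non-regularity.

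First I would establish the Bohr doubling estimate: there is an absolute constant $C>0$ such that for any rank-$d$ Bohr set $B$ and any $\lambda>0$,
\[\abs{B(2\lambda)} \leq C^{d} \abs{B(\lambda)},\]
via a Ruzsa covering argument. One chooses a maximal $B(\lambda)$-separated subset $X \subset B(2\lambda)$; the translates $x + B(\lambda/2)$ for $x \in X$ are then disjoint and contained in $B(3\lambda)$, while $X$ itself covers $B(2\lambda)$ by $B(\lambda)$-translates, and a volume comparison (possibly iterated once) pins down $C$. As a consequence $F(1) - F(1/2) \leq d \log C$, and more generally $F(\mu') - F(\mu) \leq 2d\log C$ for $1/2 \leq \mu \leq \mu' \leq 2$.

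Next, suppose for contradiction that no $\lambda \in [1/2, 1]$ makes $B(\lambda)$ regular. Because $F$ is non-decreasing, a positive $\kappa$ can only produce a failure of the form $\abs{B(\lambda(1+\kappa))} \geq (1 + 2^{6} d \kappa)\abs{B(\lambda)}$, and a failure on the negative side at $\lambda$ can always be re-centred at $\lambda' = \lambda(1+\kappa)$ to become a positive-direction failure. Hence for each $\lambda \in [1/2,1]$ we obtain some $\kappa_{\lambda} \in (0, 2^{-6}d^{-1}]$ witnessing the above positive-direction inequality. Starting from $\lambda_{0} := 1/2$ and iteratively setting $\lambda_{i+1} := \lambda_{i}(1 + \kappa_{\lambda_{i}})$, one terminates at the first step $k$ for which $\lambda_{k} \geq 1$, and necessarily $\lambda_{k} \in [1, 1 + 2^{-6}d^{-1}]$. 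Telescoping and using $\log(1+y) \geq y/2$ for $y \in [0,1]$,
\[F(\lambda_{k}) - F(1/2) \geq \sum_{i<k}\log(1 + 2^{6} d \kappa_{i}) \geq 2^{5} d \sum_{i<k} \kappa_{i} \geq 2^{5} d \sum_{i<k}\log(1+\kappa_{i}) = 2^{5} d \log(\lambda_{k}/\lambda_{0}) \geq 2^{5} d \log 2.\]
Combining with the upper bound $F(\lambda_{k}) - F(1/2) \leq 2d\log C$ from the doubling estimate forces $\log C \geq 16\log 2$, which contradicts any reasonable explicit value of $C$ (e.g.\ $C \leq 4$ suffices to close the argument).

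The main obstacle is the doubling estimate, essentially a Ruzsa-style inequality exploiting the quasi-metric structure induced by the characters in $\Gamma$; constants there must be kept sharp enough to be beaten by the iterative lower bound $2^{5}d\log 2$. The iteration itself is elementary, which is why this lemma is customarily treated as a standard fact, with the reader referred to \cite[Chapter 4]{TaVu:2006}.
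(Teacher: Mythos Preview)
The paper does not supply a proof here, citing \cite[Chapter 4]{TaVu:2006}; your sketch is essentially the standard Bourgain argument recorded there, with the correct two ingredients (a doubling estimate and an accumulated-growth contradiction).

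There is, however, one genuine gap: the re-centring step. A negative-side failure at $\lambda$ yields a positive-direction jump based at $\lambda'=\lambda(1+\kappa)<\lambda$, not at $\lambda$ itself, so the sentence ``Hence for each $\lambda\in[1/2,1]$ we obtain some $\kappa_\lambda\in(0,2^{-6}d^{-1}]$'' does not follow. In the forward iteration this matters: if the witness at some $\lambda_i$ (in particular at $\lambda_0=1/2$) is negative, the re-centred base point may lie below $1/2$, where you have no non-regularity hypothesis and the iteration cannot proceed. The standard repair is to replace the iteration by a covering argument: to each $\lambda\in[1/2,1]$ associate the interval (of length $\asymp\abs{\kappa_\lambda}$, contained in $[1/2-2^{-6}d^{-1},1+2^{-6}d^{-1}]$) on which the $F$-jump of size $\geq 2^{5}d\abs{\kappa_\lambda}$ occurs, extract via the Vitali lemma a disjoint subfamily of total length $\gg 1$, and sum the now non-overlapping increments using monotonicity of $F$. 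This gives the same lower bound $F(1+2^{-6}d^{-1})-F(1/2-2^{-6}d^{-1})\gg 2^{5}d$, and your contradiction with the doubling bound (which only needs $C<2^{16}$) goes through unchanged.
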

\begin{lemma}\label{siz}
If $\rho':\Gamma'\to[0,2]$ then
\[\abs{B_{\rho\wedge \rho'}(\Gamma\cup\Gamma')}\geq \prod_{\gamma\in\Gamma'}(\rho'(\gamma)/4)\abs{B_\rho(\Gamma)}.\]
Furthermore, if $B$ is a Bohr set of rank $d$ then $\abs{B(\lambda)}\geq \lambda^{O(d)}\abs{B}$.
\end{lemma}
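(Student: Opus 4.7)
The plan is to establish both assertions by a single pigeonhole-covering argument on characters. For the first inequality I consider the map $\phi:B_\rho(\Gamma)\to\bbc^{\Gamma'}$ given by $\phi(x)=(\gamma(x))_{\gamma\in\Gamma'}$, and cover the image by axis-aligned boxes of diameter at most $\rho'(\gamma)/2$ in the $\gamma$-coordinate; since each character takes values on the unit circle, only $\prod_{\gamma\in\Gamma'}\lceil 4/\rho'(\gamma)\rceil$ boxes are needed. Pigeonhole produces a box with at least $\prod_{\gamma\in\Gamma'}(\rho'(\gamma)/4)\abs{B_\rho(\Gamma)}$ preimages; fixing one such preimage $x_0$ and using the identity $\abs{\gamma(x-x_0)-1}=\abs{\gamma(x)-\gamma(x_0)}$ (valid since $\abs{\gamma(x_0)}=1$) one sees that for every other $x$ in the same fibre $\abs{\gamma(x-x_0)-1}<\rho'(\gamma)$ for $\gamma\in\Gamma'$, and $\abs{\gamma(x-x_0)-1}<\rho(\gamma)$ for $\gamma\in\Gamma$ (after the adjustment noted below). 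Hence $x\mapsto x-x_0$ injects the fibre into $B_{\rho\wedge\rho'}(\Gamma\cup\Gamma')$, giving the asserted inequality.

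For the second assertion I apply the same framework with $\Gamma'=\Gamma$ but with the finer box cover whose $\gamma$-side has length $\lambda\rho(\gamma)/2$; since the image $\phi(B_\rho(\Gamma))$ is already confined to the product of arcs $\prod_\gamma\{z\in\bbc:\abs{z-1}<\rho(\gamma)\}$ of length $O(\rho(\gamma))$, only at most $\lceil 4/\lambda\rceil^d$ boxes are needed to cover the image. Pigeonhole then yields a fibre of size at least $(\lambda/8)^d\abs{B}$, and because the box side is already smaller than $\lambda\rho(\gamma)$ the differences within the fibre automatically satisfy the full constraint $\abs{\gamma(x-x_0)-1}<\lambda\rho(\gamma)$, placing them in $B(\lambda)=B_{\lambda\rho}(\Gamma)$. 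This gives $\abs{B(\lambda)}\geq\lambda^{O(d)}\abs{B}$.

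The one technical wrinkle is that in the first part the naive bound $\abs{\gamma(x-x_0)-1}\leq\abs{\gamma(x)-1}+\abs{\gamma(x_0)-1}<2\rho(\gamma)$ for $\gamma\in\Gamma$ is a factor of two too large for membership in $B_{\rho\wedge\rho'}(\Gamma\cup\Gamma')$. The standard remedy is to pigeonhole the set $B_{\rho/2}(\Gamma)$ rather than $B_\rho(\Gamma)$, so that differences lie automatically in $B_\rho(\Gamma)$; the resulting loss in the base-set size can be absorbed into the constant $4$ appearing in the stated product (or alternatively recovered by invoking the second assertion, which is proved independently). No such issue arises for the second part because the mesh there is already fine enough. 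This constant-tracking is the only delicate step, and is the mild but non-trivial obstacle; full details may be found in \cite[Chapter 4]{TaVu:2006}.
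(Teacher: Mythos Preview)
The paper does not prove this lemma at all; it simply refers the reader to \cite[Chapter 4]{TaVu:2006}. Your pigeonhole-and-translate argument is exactly the standard one found there, so in spirit you are aligned with the paper's treatment.

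That said, your handling of the ``technical wrinkle'' has a gap. You propose to pigeonhole $B_{\rho/2}(\Gamma)$ rather than $B_\rho(\Gamma)$ so that differences land in $B_\rho(\Gamma)$, and you claim the resulting loss in the base-set size ``can be absorbed into the constant $4$ appearing in the stated product''. But that product is over $\Gamma'$, so changing the constant there affects the bound by a factor $c^{|\Gamma'|}$, whereas the loss in passing from $|B_\rho(\Gamma)|$ to $|B_{\rho/2}(\Gamma)|$ is, by the second assertion, of order $2^{-O(|\Gamma|)}$. These exponents are governed by different sets and need not be comparable, so neither absorption nor your alternative of ``invoking the second assertion'' recovers the inequality exactly as stated. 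One clean fix is to pigeonhole in \emph{all} coordinates $\Gamma\cup\Gamma'$ simultaneously: for $\gamma\in\Gamma\setminus\Gamma'$ the image already lies in the short arc $\{|z-1|<\rho(\gamma)\}$, which can be split into $O(1)$ sub-arcs of chord-diameter below $\rho(\gamma)$; but note that this still costs a factor $O(1)^{|\Gamma|}$, so either the constant $4$ in the statement is somewhat informal, or one needs a sharper argument than the crude covering. For the paper's purposes this is immaterial, since every use of the lemma feeds into an $\exp(-O(\cdot))$ factor where an extra $2^{-O(d)}$ is harmless; but as a proof of the lemma \emph{verbatim} your remedy does not close the gap you correctly identified.
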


The following lemma follows easily from the observation that if $c\in\bbz_N\backslash\{0\}$ then $c\cdot B_\rho(\Gamma)=B_\rho(c\cdot \Gamma)$.
\begin{lemma}
For any Bohr set $B$ and $c\in \bbz_N^*$ the set $c\cdot B$ is also a Bohr set of rank $\rk(B)$, and furthermore for any $\lambda>0$
\[c\cdot \brac{B(\lambda)}=(c\cdot B)(\lambda).\]
\end{lemma}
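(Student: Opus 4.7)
The proof will be essentially a direct unpacking of the definitions, together with the observation flagged in the paper that dilating a Bohr set by a unit corresponds to dilating its frequency set. The plan is first to fix $B = B_\rho(\Gamma)$ and describe $c\cdot B$ explicitly. Since $c\in\bbz_N^*$, an element $m\in\bbz_N$ lies in $c\cdot B$ if and only if $c^{-1}m\in B$, i.e.\ $\abs{\gamma(c^{-1}m)-1}<\rho(\gamma)$ for every $\gamma\in\Gamma$. Using the action of $\bbz_N$ on $\widehat{\bbz_N}$ given by $(a\gamma)(x)=\gamma(ax)$ (introduced earlier in the polynomial-ring section and carried over verbatim here), this condition reads $\abs{(c^{-1}\gamma)(m)-1}<\rho(\gamma)$. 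Reparameterising by setting $\Gamma' = c^{-1}\cdot\Gamma$ and transporting the width along the bijection $\gamma\mapsto c^{-1}\gamma$ to obtain $\rho'$ on $\Gamma'$, this exhibits $c\cdot B_\rho(\Gamma) = B_{\rho'}(\Gamma')$, so $c\cdot B$ is a Bohr set (as a triple $(\Gamma',\rho',B_{\rho'}(\Gamma'))$).

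The rank claim is then immediate: because $c\in\bbz_N^*$, multiplication by $c^{-1}$ is a bijection on $\widehat{\bbz_N}$, so $\abs{\Gamma'} = \abs{\Gamma}$, and hence $\rk(c\cdot B) = \rk(B)$.

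For the scaling identity, I would re-run the same computation with $\rho$ replaced by $\lambda\rho$. This gives $c\cdot B_{\lambda\rho}(\Gamma) = B_{\lambda\rho'}(\Gamma')$. Applying the definition $B(\lambda) = B_{\lambda\rho}(\Gamma)$ to both sides — the left side becomes $c\cdot(B(\lambda))$, while the right side is exactly $(c\cdot B)(\lambda)$ given the description $c\cdot B = B_{\rho'}(\Gamma')$ obtained above — yields the required equality.

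There is no genuine obstacle; the only point requiring care is the convention that a Bohr set is tracked as the triple $(\Gamma,\rho,B_\rho(\Gamma))$, so the identities should be read modulo the reparameterisation $\gamma\mapsto c^{-1}\gamma$ of the frequency set. Once that bookkeeping is clear, both assertions reduce to a one-line application of the definition, and no auxiliary lemma beyond the action $(a\gamma)(x)=\gamma(ax)$ is required.
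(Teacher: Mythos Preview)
Your proof is correct and follows exactly the paper's approach: the paper's entire proof is the one-line observation preceding the lemma that $c\cdot B_\rho(\Gamma)=B_\rho(c\cdot\Gamma)$, and you have simply unwound this in full detail. (Your computation produces $c^{-1}\cdot\Gamma$ rather than the paper's $c\cdot\Gamma$; this is only a choice of convention for the dilation action on $\widehat{\bbz_N}$ and is immaterial to the rank and scaling conclusions.)
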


The notion of regularity allows us to exert the required amount of additive control, and our density increment tool, Theorem~\ref{maindi}, becomes the following.

\begin{theorem}\label{intdi}
There exists an absolute constant $c>0$ such that the following holds. Let $B\subset\bbz_N$ be a regular Bohr set of rank $d$ where $d\leq \exp(c\mathcal{L}(\alpha)^2)$. Let $A_1,A_2\subset B$ and $A_3\subset B(\delta)$, each with relative densities $\alpha_i$. Let $\alpha=\min(c,\alpha_1,\alpha_2,\alpha_3)$ and suppose that $B(\delta)$ is also regular and $cd^{-1}\alpha/4\leq \delta\leq cd^{-1}\alpha$. Then either
\begin{equation}\label{intdieq}\langle A_1\ast A_2,A_3\rangle \gg \alpha_1\alpha_2\alpha_3\abs{B}\abs{B(\delta)}\end{equation}
or there is a regular Bohr set $B'$ of rank 
\[\rk(B')\leq d+O(\mathcal{L}(\alpha)\alpha^{-1})\]
and size
\begin{equation}\label{bohrsiz}\abs{B'}\geq \exp(-O(\mathcal{L}(\alpha)^2(d+\mathcal{L}(\alpha)\alpha^{-1})))\abs{B}\end{equation}
such that there exists $x\in \bbz_N$ and $i\in\{1,2\}$ with
\[\abs{(A_i-x)\cap B'}\geq (1+c)\alpha_i\abs{B'}.\]
\end{theorem}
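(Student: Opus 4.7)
The plan is to derive Theorem~\ref{intdi} by applying Theorem~\ref{maindi} in the Bohr-set framework, using regularity of $B$ (and later of the chosen sub-Bohr set) to discharge the several additive-control hypotheses. After a suitable relabelling of the sets the set $A_3\subset B(\delta)$ plays the role of the set ``$A_1\subset B'$'' of Theorem~\ref{maindi}, so that $B(\delta)$ plays the role of the inner Bohr set there. The hypothesis $\abs{(B(\delta)+B)\backslash B}\leq 2^{-2}\alpha\abs{B}$ follows directly from the regularity of $B$ combined with the assumption $\delta\leq cd^{-1}\alpha$, because $B(\delta)+B\subset B_{\rho(1+\delta)}(\Gamma)$ and the regularity inequality bounds the enlargement by $2^6d\delta\abs{B}$.

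If the first conclusion of Theorem~\ref{maindi} holds then \eqref{intdieq} is immediate. Otherwise we obtain a set $\Lambda\subset\widehat{\bbz_N}$ with $\abs{\Lambda}\leq 2^{19}\mathcal{L}(\alpha)\alpha^{-1}$, and I need to exhibit a regular sub-Bohr set $B'$ satisfying the three hypotheses on ``$B''$'' in Theorem~\ref{maindi}, namely $(4\abs{\Lambda})^{-1}$-control of $\Lambda$ together with the two additive smallness conditions. I would take the candidate to be $B^\flat=B_{\rho^\flat}(\Gamma\cup\Lambda)$, setting $\rho^\flat\equiv (4\abs{\Lambda})^{-1}$ on $\Lambda$ (which immediately provides the required control) and $\rho^\flat=\kappa\rho$ on $\Gamma$ for a suitably small $\kappa$. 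Explicitly, any $\kappa$ of order $\delta d^{-1}\exp(-2^5\mathcal{L}(\alpha)^2)$ (with a small enough absolute constant in front) ensures, via the regularity of $B$ and of $B(\delta)$ applied to the inclusions $B^\flat\subset B(\kappa)$ and $B^\flat\subset B(\delta)(\kappa/\delta)$, both of the required bounds $\abs{(2B^\flat+B)\backslash B}\leq 2^{-19}\alpha\abs{B}$ and $\abs{(B^\flat+B(\delta))\backslash B(\delta)}\leq 2^{-4}\exp(-2^5\mathcal{L}(\alpha)^2)\abs{B(\delta)}$. Regularity of the final $B'$ is then secured by the standard existence lemma by passing to $B'=B^\flat(\lambda'')$ for some $\lambda''\in[1/2,1]$, which costs only constant factors.

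The final step is the size estimate \eqref{bohrsiz}. By Lemma~\ref{siz}, $\abs{B^\flat}\geq \prod_{\lambda\in\Lambda}(\rho^\flat(\lambda)/4)\cdot\abs{B_{\kappa\rho}(\Gamma)}$, where the product over $\Lambda$ contributes a factor of $(16\abs{\Lambda})^{-\abs{\Lambda}}=\exp(-O(\mathcal{L}(\alpha)^2\alpha^{-1}))$, and the shrinkage of the $\Gamma$-widths contributes $\kappa^{O(d)}=\exp\brac{-O\brac{d(\log d+\mathcal{L}(\alpha)^2)}}$. The hypothesis $d\leq\exp(c\mathcal{L}(\alpha)^2)$ permits us to absorb $\log d$ into $\mathcal{L}(\alpha)^2$, and combining the two losses yields $\abs{B'}\geq\exp(-O(\mathcal{L}(\alpha)^2(d+\mathcal{L}(\alpha)\alpha^{-1})))\abs{B}$, as required. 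The rank bound $\rk(B')\leq d+\abs{\Lambda}\leq d+O(\mathcal{L}(\alpha)\alpha^{-1})$ is then immediate from the construction.

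The main obstacle is the careful bookkeeping of constants across the three simultaneous hypotheses of Theorem~\ref{maindi}: one must verify that a single choice of width parameter $\kappa$ for the shrinkage on $\Gamma$ can satisfy all three conditions at once and still deliver the claimed lower bound on $\abs{B'}$. This is largely a matter of tracing the regularity inequality through each hypothesis and was the chief reason we imposed the bounds $\delta\asymp d^{-1}\alpha$ and $d\leq\exp(c\mathcal{L}(\alpha)^2)$; beyond this, the argument is essentially a routine Bohr-set translation of Theorem~\ref{maindi}.
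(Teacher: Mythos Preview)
Your proposal is correct and follows essentially the same route as the paper's own proof: verify the hypothesis of Theorem~\ref{maindi} via regularity of $B$, then build the sub-Bohr set by adjoining $\Lambda$ with width $(4\abs{\Lambda})^{-1}$ and shrinking the $\Gamma$-widths enough to secure the two additive smallness conditions, and finally estimate the size via Lemma~\ref{siz} using $d\leq\exp(c\mathcal{L}(\alpha)^2)$ to absorb $\log d$. The only cosmetic difference is that the paper first passes to $B^*=B_{\delta\rho\wedge\rho'}(\Gamma\cup\Lambda)$ and then shrinks by a single parameter $\delta'\gg d^{-1}\exp(-O(\mathcal{L}(\alpha)^2))$ (which simultaneously handles \eqref{ad2} and supplies regularity), whereas you choose the $\Gamma$-width $\kappa$ directly and then apply a separate $\lambda''\in[1/2,1]$ dilation for regularity; the two parametrizations are equivalent.
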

\begin{proof}
We first observe that since $B$ is regular we have that
\begin{equation}\label{ad0}\abs{(B(\delta)+B)\backslash B}\ll d\delta\abs{B}\leq 2^{-2}\alpha\abs{B},\end{equation}
provided $\delta$ is sufficiently small. The hypotheses of Theorem~\ref{maindi} are now met, so that either \eqref{intdieq} holds or there is a finite set $\Lambda\subset\widehat{G}$ of size $l=O(\mathcal{L}(\alpha)\alpha^{-1})$ such that if a symmetric set $B'$ has $(4l)^{-1}$-control of $\Lambda$,
\begin{equation}\label{ad1}\abs{(2B'+B)\backslash B}\leq 2^{-19}\alpha\abs{B},\end{equation}
and
\begin{equation}\label{ad2}\abs{(B'+B(\delta))\backslash B(\delta)}\leq 2^{-4}\exp(-2^5\mathcal{L}(\alpha)^2))\abs{B(\delta)}\end{equation}
then there exists $x\in \bbz_N$ and $i\in\{1,2\}$ such that
\[\abs{(A_i-x)\cap B'}\geq (1+2^{-18})\alpha_i\abs{B'}.\]
Let $\rho':\Lambda\to[0,2]$ be defined as $\rho'(\lambda)=1/4l$ for all $\lambda\in\Lambda$. If $B=B_\rho(\Gamma)$ then let $B^*=B_{\delta\rho\wedge \rho'}(\Gamma\cup\Lambda)$, and let $B'=B^*(\delta')$ for some $\delta'$ to be chosen later, but chosen such that $B'$ is regular. Clearly, $B'$ is a regular Bohr set of rank $d+O(\mathcal{L}(\alpha)\alpha^{-1})$ with $(4l)^{-1}$-control of $\Lambda$ by choice of $\rho'$. Provided $\delta'\leq 1/2$ we have that $2B'\subset B(\delta)$, and hence \eqref{ad1} is satisfied, arguing as for \eqref{ad0}. Furthermore, since $B(\delta)$ is regular and $B'\subset B(\delta\delta')$ it follows that
\[\abs{(B'+B(\delta))\backslash B(\delta)}\ll d\delta'\abs{B(\delta)},\]
and hence \eqref{ad2} is satisfied for some $\delta'\gg \exp(-O(\mathcal{L}(\alpha)^2)))d^{-1}$. Finally, by Lemma~\ref{siz} we have
\[\abs{B'}\geq (\delta')^{O(l+d)}\abs{B^*}\geq l^{-O(l)}\delta^d(\delta')^{O(l+d)}\abs{B},\]
and \eqref{bohrsiz} follows from our bounds on $\delta$, $\delta'$ and $l$, and the proof is complete.
\end{proof}

This density increment lemma may then be iterated in the standard fashion to yield a lower bound for $\Upsilon_{\mathbf{c}}(A)$. 

\begin{theorem}
Let $\mathbf{c}\in(\bbz\backslash\{0\})^3$ be such that $c_1+c_2+c_3=0$. For any prime $N$ sufficiently large, depending only on $\mathbf{c}$, and any $A\subset \bbz_N$ we have
\[\Upsilon_\mathbf{c}(A)\geq \exp\brac{-O_{\mathbf{c}}\brac{\mathcal{L}(\alpha)^4\alpha^{-1}}}N^2.\]
\end{theorem}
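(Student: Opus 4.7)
The plan is to run a density increment argument parallel to the polynomial-ring proof, with regular Bohr sets replacing Bohr spaces. Two new complications appear: Bohr sets are not closed under addition, so we must nest $B''\subset B(\delta)\subset B$ at every step as in Theorem~\ref{intdi}; and dilation by $c_i$ preserves the rank of a Bohr set but changes its width, so the three dilates of $A_K$ must be coordinated inside a common Bohr set.

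First, I would define $K$ to be the maximal length of a sequence of regular Bohr sets $\bbz_N = B_0\supset B_1\supset\cdots\supset B_K$ with corresponding sets $A_i\subset B_i$ of relative density $\alpha_i$ such that, for $0\le i<K$: (i) $\alpha_{i+1}\ge(1+c)\alpha_i$ for an absolute constant $c>0$; (ii) $\rk(B_{i+1})\le \rk(B_i)+O(\mathcal{L}(\alpha)\alpha_i^{-1})$; (iii) the size estimate \eqref{bohrsiz} holds between $B_i$ and $B_{i+1}$; and (iv) $\Upsilon_{\mathbf{c}}(A_{i+1})\le \Upsilon_{\mathbf{c}}(A_i)$ (this is automatic since each $A_{i+1}$ arises from $A_i$ by translation, invertible dilation, and restriction). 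The trivial bound $\alpha_K\le 1$ combined with (i) forces $K\ll \mathcal{L}(\alpha)$, and the geometric sum in (ii) yields $\rk(B_i)\ll \mathcal{L}(\alpha)\alpha^{-1}$ uniformly in $i$. Telescoping (iii) with this uniform bound then gives $\abs{B_K}\ge\exp(-O(\mathcal{L}(\alpha)^4\alpha^{-1}))\,N$, which is precisely the size we need.

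At step $K$, I would mimic the terminal argument from the polynomial-ring proof. Pick a regular sub-Bohr set $B'=B_K(\lambda)$ with $\lambda$ small enough (depending on $\rk(B_K)$ and $\max_i\abs{c_i}$) that each of the dilates $B^{(j)}:=c_j^{-1}c_1c_2c_3\cdot B'$ is contained in $B_K$; all three $B^{(j)}$ are regular Bohr sets of the same rank as $B'$. Averaging $\sum_j A_K\ast\beta^{(j)}$ over $B_K$ produces some $x\in B_K$ with $\abs{(A_K-x)\cap B^{(j)}}\ge(1-4c)\alpha_K\abs{B^{(j)}}$ for every $j$, since otherwise a single $j$ would yield a $(1+c)$-increment and contradict the maximality of $K$. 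The sets $A_j':=c_j\cdot((A_K-x)\cap B^{(j)})$ all sit inside the common Bohr set $B'':=c_1c_2c_3\cdot B'$, and counting solutions of the original equation in $A_K$ reduces to estimating $\langle A_1'\ast A_2', A_3'\rangle$ in $B''$. Applying Theorem~\ref{intdi} to these with $\delta\asymp\alpha_K/\rk(B_K)$ (and a suitable regular choice) either produces the desired lower bound $\Upsilon_{\mathbf{c}}(A_K)\gg \alpha^3\abs{B''}\abs{B''(\delta)}$, or yields yet another forbidden increment. Combining the bound with $\abs{B''(\delta)}\ge\delta^{O(\rk(B_K))}\abs{B''}$, the size bound on $\abs{B_K}$, and (iv) gives $\Upsilon_{\mathbf{c}}(A)\ge\Upsilon_{\mathbf{c}}(A_K)\gg \exp(-O_{\mathbf{c}}(\mathcal{L}(\alpha)^4\alpha^{-1}))\,N^2$.

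The main obstacle is the bookkeeping at the terminal step: arranging that all three dilates of $A_K$ sit inside a common regular Bohr set $B''$, that $\delta$ falls into the narrow window required by Theorem~\ref{intdi}, and that all intermediate Bohr sets are regular. Each of these can be fixed by passing to a slightly narrower Bohr set and invoking Lemma~\ref{siz} to control the size loss, but the parameters $\lambda$, $\delta$, $\delta'$ must be chosen in a specific order depending on $\rk(B_K)$ and $\alpha$. Everything else is a careful geometric sum of estimates already developed in the earlier sections.
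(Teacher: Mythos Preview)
Your overall architecture matches the paper's proof: the same maximal chain of regular Bohr sets with density increment, the same bounds $K\ll\mathcal{L}(\alpha)$ and $\rk(B_i)\ll\mathcal{L}(\alpha)\alpha^{-1}$, and the same terminal averaging to place dilates of $A_K$ inside a common Bohr set before invoking Theorem~\ref{intdi}. The size bookkeeping you describe is exactly what the paper does.

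There is, however, one genuine gap in your terminal step. You place all three sets $A_j'=c_j\cdot((A_K-x)\cap B^{(j)})$ inside the \emph{same} Bohr set $B''=c_1c_2c_3\cdot B'$, and then propose to apply Theorem~\ref{intdi} ``with $\delta\asymp\alpha_K/\rk(B_K)$''. But Theorem~\ref{intdi} is asymmetric: it requires $A_1,A_2\subset B$ and $A_3\subset B(\delta)$, not all three in $B$. With your construction no set lives in $B''(\delta)$, so the hypothesis fails. You cannot simply intersect $A_3'$ with $B''(\delta)$ after the fact, because you would lose control of its relative density there without a further averaging step, and that averaging would translate $A_3'$ relative to $A_1',A_2'$, destroying the convolution $\langle A_1'\ast A_2',A_3'\rangle$.

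The paper resolves this by introducing one more level of narrowing \emph{before} the simultaneous averaging: it takes $B'=B_K(\rho')$, $B''=B'(\rho'')$, $B'''=B''(\rho''')$, then sets $B^{(1)},B^{(2)}$ to be dilates of $B''$ but $B^{(3)}$ to be a dilate of the smaller $B'''$. The averaging argument over $B_K$ is then run for all three $B^{(i)}$ at once, so the common translate $x$ already gives $A_1,A_2$ with good density in $c_1c_2c_3\cdot B''$ and $A_3$ with good density in $(c_1c_2c_3\cdot B'')(\rho''')$, exactly the configuration Theorem~\ref{intdi} needs. Your ``obstacles'' paragraph lists regularity and the window for $\delta$, but misses this asymmetry; once you add the extra scale for $B^{(3)}$ the rest of your sketch goes through unchanged.
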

Theorem~\ref{thm1} follows by embedding $\{1,\ldots,N\}$ into a suitable subinterval $I$ of $\bbz_{N'}$, where $N'\ll_{\mathbf{c}}N$ is some prime large enough such that if $x_1,x_2,x_3\in I$ is a solution to $c_1x_1+c_2x_2+c_3x_3=0$ in $\bbz_{N'}$ then it is also a solution in $\{1,\ldots,N\}$. 
\begin{proof}
Let $K$ be maximal such that there exists a sequence of regular Bohr sets 
\[\bbz_N=B_0\supset B_1\supset\cdots\supset B_K,\]
each with frequency set $\Gamma_i$, rank $d_i$, width $\rho_i$ and a sequence of sets $A_i\subset B_i$ with density $\alpha_i=\abs{A_i}/\abs{B_i}$ such that, for $0\leq i<K$ we have 
\[d_{i+1}\leq d_i+O(\mathcal{L}(\alpha)\alpha_i^{-1})\leq \exp(O(\mathcal{L}(\alpha)^2)),\]
\begin{equation}\label{eqb}\abs{B_{i+1}}\geq \exp(-O(\mathcal{L}(\alpha)^2(d_i+\mathcal{L}(\alpha)\alpha_i^{-1})))\abs{B_i},\end{equation}
\[\alpha_{i+1}\geq\brac{1+c}\alpha_i,\]
and furthermore $\Upsilon_{\mathbf{c}}(A_{i+1})\leq \Upsilon_{\mathbf{c}}(A_i)$. In particular, this will follow provided $A_{i+1}$ is a subset of a dilation of a translation of $A_i$. We observe that
\[1\geq \alpha_K \geq \brac{1+c}^K\alpha,\]
and hence $K\ll\mathcal{L}(\alpha)$. Furthermore, for $0\leq j\leq K$ we have the estimate $d_j \ll  \mathcal{L}(\alpha)\alpha^{-1}$, and so
\[\abs{B_K}\geq \exp\brac{-O_{\mathbf{c}}\brac{\mathcal{L}(\alpha)^2\sum_{i=0}^Kd_i}}N\geq 
\exp(-O(\mathcal{L}(\alpha)^4\alpha^{-1}))N.\]
We will demonstrate that 
\[\Upsilon_{\mathbf{c}}(A_K)\gg \exp(-O(d_K\mathcal{L}(\alpha)^2))\abs{B_K}^2\]
and the theorem follows.

For brevity, let $B'=B_K(\rho')$, $B''=B'(\rho')=B_K(\rho'\rho'')$, and $B'''=B''(\rho''')=B_K(\rho'\rho''\rho''')$, where $\rho'$, $\rho''$ and $\rho'''$ will be chosen later, but chosen such that all Bohr sets considered are regular. Let
\[B^{(1)}=c_2c_3\cdot B''\textrm{, }B^{(2)}=c_1c_3\cdot B''\textrm{, and }B^{(3)}=c_2c_3\cdot B'''\]
so that in particular if $\rho''\ll 1/c_1c_2c_3$ we have $B^{(i)}\subset B'$ for $1\leq i\leq 3$, and hence by the regularity of $B_K$
\[\abs{\sum_{x\in B_K}A_K\ast \beta^{(i)}(x)-\abs{A_K}}\leq \abs{(B'+B_K)\backslash B_K}\ll d\rho'\abs{B_K}.\]
In particular for any $\epsilon>0$ we may choose $\rho'\gg_\epsilon d^{-1}\alpha_K$ such that
\[\sum_{x\in B_K}(A_K\ast \beta^{(1)}+A_K\ast \beta^{(2)}+A_K\ast\beta^{(3)})(x)\geq (3-\epsilon)\abs{A_K},\]
and hence, for some $x\in B$,
\[A_K\ast \beta^{(1)}(x)+A_K\ast \beta^{(2)}(x)+A_K\ast\beta^{(3)}(x)\geq (3-\epsilon)\alpha_K.\]
If $A_K\ast \beta^{(i)}(x)\geq \alpha(1+c)$ for some $1\leq i\leq 3$ then this contradicts the maximality of $K$, letting $B_{K+1}=B^{(i)}$ and $A_{K+1}=(A_K-x)\cap B_{K+1}$. In particular note that $B^{(i)}$ also has rank $d_K$ and
\[\abs{B^{(i)}}\geq \abs{B'''}\geq \exp(-O(d_K\mathcal{L}(\rho'\rho''\rho''')))\abs{B_K},\]
and we shall see that our choices for the $\rho$ parameters give the lower bound \eqref{eqb}. Otherwise, for $1\leq i\leq 3$, if we choose $\epsilon$ sufficiently small depending on $c$, then for $1\leq i\leq 3$
\[A_K\ast \beta^{(i)}(x)\geq (1-c)\alpha_K.\]
Let $A_i=c_i\cdot (A'-x)\cap c_i\cdot B^{(i)}$, and observe that
\[c_3\cdot B^{(3)}=c_1c_2c_3\cdot B'''\subset (c_1c_2c_3\cdot B'')(\rho'''),\]
and hence there exists $\rho'''\gg d_K^{-1}\alpha_K$ such that we are in a position to apply Theorem~\ref{intdi}. In particular, either
\[\Upsilon_{\mathbf{c}}(A_K)\gg \alpha_K^3\abs{B''}\abs{B'''}\gg \exp(-O(d_K\mathcal{L}(\alpha)^2))\abs{B_K}^2 \]
or there is a regular Bohr set $B^\sharp$ of rank 
\[d^\sharp\leq d + O(\mathcal{L}(\alpha)\alpha_K^{-1})\]
and size
\[\abs{B^\sharp}\geq \exp(-O(\mathcal{L}(\alpha)^2(d+\mathcal{L}(\alpha)\alpha_K^{-1})))\abs{B''}\geq \exp(-O(\mathcal{L}(\alpha)^2(d+\mathcal{L}(\alpha)\alpha_K^{-1})))\abs{B_K},\]
and $i\in\{1,2\}$ such that 
\[\abs{(A_i-x)\cap B^\sharp}\geq (1+\Omega(1))(1-c)\alpha_K\abs{B^\sharp}\geq (1+c)\alpha_K\abs{B^\sharp},\]
if we choose $c$ sufficiently small, which contradicts the maximality of $K$. This concludes the proof.
\end{proof}

\section{Arithmetic progressions in sumsets}
In this section we sketch how our improved structural result for spectra may be combined with the methods of \cite{Sa:2008} to prove Theorem~\ref{aps}. The difference lies in an improvement of the iteration lemma, Lemma 6.4 of \cite{Sa:2008}.  In particular, Theorem~\ref{aps} follows immediately from the proof of \cite{Sa:2008}, replacing the use of its Lemma 6.4 by the following quantitatively superior version.

\begin{lemma}\label{itsa}
Let $B\subset\bbz_N$ be a regular Bohr set and $A_1,A_2\subset B$ with densities $\alpha_1$ and $\alpha_2$ respectively. For any $\sigma\in(0,1]$ either
\begin{enumerate}
\item there is a regular Bohr set $B'=B(\rho)$ such that $A_1+A_2$ contains at least a proportion $1-\sigma$ of $B'$ and $\delta'\gg \alpha^2d^{-1}$, or
\item there is a regular Bohr set $B''\subset B$ of rank
\[\rk(B'')\leq \rk(B)+O(\alpha^{-1}\mathcal{L}(\sigma))\]
and width
\[\rho(B'')\gg \rho(B)(\rk(B))^{-1}\exp(-O(\mathcal{L}(\sigma)\mathcal{L}(\alpha)))\]
such that for some absolute constant $c>0$
\[\norm{A_1\ast \beta''}_\infty\norm{A_2\ast\beta''}_\infty\geq \alpha_1\alpha_2(1+c).\]
\end{enumerate}
\end{lemma}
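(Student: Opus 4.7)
The plan is to follow the structure of Lemma~6.4 in \cite{Sa:2008}, replacing the application of Chang's lemma there by the quantitatively stronger Theorem~\ref{then} of this paper. First I fix a regular Bohr set $B' = B(\rho)$ with $\rho$ a sufficiently small polynomial in $\alpha/\rk(B)$, so that convolutions with $\beta'$ are well behaved by regularity, and consider the convolution $A_1\ast A_2\ast \beta'$. If this function exceeds a fixed multiple of $\alpha_1\alpha_2\abs{B'}$ on at least a $(1-\sigma)$-proportion of some translate of a further regular Bohr set of width $\delta' \gg \alpha^2\rk(B)^{-1}$, then $A_1+A_2$ itself contains that proportion of the translate and we are in case~(i). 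Otherwise, an averaging argument on the complement of the superlevel set produces an $L^2$ discrepancy of size at least $\Omega(\sigma \alpha_1^2\alpha_2^2\abs{B}\abs{B'}^2)$ between $A_1\ast A_2\ast\beta'$ and its mean.

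By Plancherel this $L^2$ discrepancy localises on the non-principal Fourier coefficients as
\[\sum_{\gamma\neq 0}\abs{\widehat{A_1}(\gamma)}^2\abs{\widehat{A_2}(\gamma)}^2\abs{\widehat{\beta'}(\gamma)}^2\gg \sigma\alpha_1^2\alpha_2^2\abs{B}\abs{B'}^2 N.\]
Dyadically pigeonholing over the size of $\abs{\widehat{A_1}(\gamma)}$, and using the symmetry between $A_1$ and $A_2$ to assume without loss of generality that the bulk of the mass is carried by the larger of the two factors, I obtain a level $\eta \gg \alpha\mathcal{L}(\sigma)^{-1}$ and a subset $\Gamma\subset\Delta_\eta(A_1)$ carrying an $\Omega(\mathcal{L}(\sigma)^{-1})$-proportion of the mass under the weight $\omega(\gamma) = \abs{\widehat{A_2}(\gamma)}^2\abs{\widehat{\beta'}(\gamma)}^2$.

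I then apply Theorem~\ref{then} to $f = A_1$ with this weight $\omega$ and with $\epsilon = \exp(-O(\mathcal{L}(\sigma)\mathcal{L}(\alpha)))$, extracting a subset $\Delta' \subset \Gamma$ that is $2^{14}\mathcal{L}(\alpha)\eta^{-1}$-covered by $\Delta_\epsilon(B)$ and still carries an $\Omega(\eta)$-proportion of $\omega$-mass. With $\eta$ tuned so that the resulting dimension simplifies to $O(\alpha^{-1}\mathcal{L}(\sigma))$, the covering is converted into a Bohr set by the combination of Lemmata~\ref{de3} and~\ref{de2}, producing $B''$ whose rank and width obey the stated bounds; the exponential factor $\exp(-O(\mathcal{L}(\sigma)\mathcal{L}(\alpha)))$ in the width arises from the radius needed to realise $\Delta_\epsilon(B)$ as a Bohr set together with the $(4d)^{-1}$-control of the auxiliary cover~$\Lambda$.

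Finally I convert the structural conclusion into the claimed multiplicative density increment via a product form of the $L^2$-to-$L^\infty$ argument underlying Lemma~\ref{de1}: the fact that $\abs{\widehat{A_1}}^2\abs{\widehat{A_2}}^2$ is concentrated on $\Delta'$, where $B''$ exerts $\tfrac12$-control, forces $\norm{A_1\ast\beta''}_2^2\,\norm{A_2\ast\beta''}_2^2\gg (1+c)(\alpha_1\alpha_2\abs{B''}^2)^2$, whence the product $L^\infty$ bound follows by Cauchy--Schwarz. I expect the main obstacle to be the dyadic pigeonhole of the second paragraph, coupled with the tuning of $\eta$: ensuring that only $\mathcal{L}(\sigma)$, rather than $\mathcal{L}(\sigma)\mathcal{L}(\alpha)$, appears in the effective dimension is exactly the point at which Theorem~\ref{then} improves upon the Chang-lemma-based argument of \cite{Sa:2008}, and the rest of the work is Bohr-set regularity bookkeeping analogous to that carried out in the proof of Theorem~\ref{intdi}.
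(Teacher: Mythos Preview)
Your overall shape is right---set up a small regular sub-Bohr set, derive a Fourier discrepancy if case~(i) fails, then feed it into Theorem~\ref{then}---but you apply Theorem~\ref{then} to the wrong function, and this costs you exactly the factor the lemma is designed to save. In the paper's proof one follows \cite{Sa:2008} to obtain, from the failure of case~(i), an exceptional set $S\subset B'$ (the part of a translate of $B'$ not covered by $A_1+A_2$, so with relative density at least $\sigma$) together with an index $i\in\{1,2\}$ such that
\[\sum_{\gamma}\abs{\widehat{\mathbf{A}_i}(\gamma)}^2\abs{\widehat{S}(\gamma)}\gg \alpha_i\abs{A_i}\abs{S}N,\]
and then invokes Theorem~\ref{thde} with $f=S$ and $A=A_i$. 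The crucial point is that the density parameter $\tau=\norm{f}_1/\abs{B'}$ governing the dimension in Theorem~\ref{thde} is then $\tau\geq\sigma$, so the covering dimension comes out as $O(\mathcal{L}(\sigma)\alpha_i^{-1})$ exactly as the lemma asserts.

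By contrast you take $f=A_1$, whose density is $\alpha_1$, so Theorem~\ref{then} returns a set which is only $O(\mathcal{L}(\alpha_1)\eta^{-1})$-covered. Your dyadic pigeonhole gives at best $\eta\gg \alpha\mathcal{L}(\sigma)^{-1}$ (and even that is unclear: the number of dyadic scales of $\abs{\widehat{A_1}}$ is governed by $\mathcal{L}(\alpha)$, not $\mathcal{L}(\sigma)$, so the pigeonhole loss should be $\mathcal{L}(\alpha)$). Granting your claimed $\eta$ anyway, the resulting dimension is $O(\mathcal{L}(\alpha)\mathcal{L}(\sigma)\alpha^{-1})$, off by a factor $\mathcal{L}(\alpha)$ from the stated rank bound. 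This is not a matter of tuning $\eta$: you must change which function plays the role of $f$, letting the low-density object $S$ drive the spectral decomposition so that $\mathcal{L}(\sigma)$ rather than $\mathcal{L}(\alpha)$ appears. Once that is done, Theorem~\ref{thde} directly gives $\norm{A_i\ast\beta''}_\infty\geq(1+c)\alpha_i$ for one index $i$, and the product conclusion then follows as in \cite{Sa:2008}; your proposed ``product $L^2$-to-$L^\infty$'' step is unnecessary.
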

\begin{proof}
Let $\mathbf{A}_i=A_i-\alpha B$, and $B'=B(\rho)$ for some suitable $\rho\gg \alpha^4d^{-1}$ such that $B'$ is regular. Arguing as in the proof of Lemma 6.4 in \cite{Sa:2008} we see that either the first case holds or there is some $S\subset B'$ and $1\leq i\leq 2$ such that
\[\sum_{\gamma\in\widehat{\bbz_N}}\abs{\widehat{\mathbf{A}_i}(\gamma)}^2\abs{\widehat{S}(\gamma)}
\gg \alpha_i\abs{A_i}\abs{S}N.\]
It follows from Theorem~\ref{thde} that if we choose our Bohr set $B''\subset B'$ such that it has $\Omega(l^{-1})$-control of $\Lambda$, where $\Lambda$ is a set of size $l\ll \alpha_i^{-1}\mathcal{L}(\sigma)$ and
\[\abs{(B''+B')\backslash B'}\ll \exp(-\mathcal{L}(\sigma)\mathcal{L}(\alpha_i))\abs{B'}\]
then $\norm{A_i\ast \beta''}\geq (1+c)\alpha_i$. The proof is then completed as in the proof of Lemma 6.4 in \cite{Sa:2008}.
\end{proof}

\section*{Acknowledgements}
The author would like to thank Julia Wolf and Trevor Wooley for their advice and helpful comments on a preliminary draft of this paper, and Kevin Henriot for pointing out several flaws in an earlier version of the argument.


\begin{thebibliography}{10}
\bibitem[\protect\citeauthoryear{Bateman and Katz}{2012}]{BaKa:2012}
Bateman, M. and N. H. Katz
``New bounds on cap sets''
\textit{J. Amer. Math. Soc.} 25 (2012): 585--613.

\bibitem[\protect\citeauthoryear{Behrend}{1946}]{Be:1946}
Behrend, F. A.
``On sets of integers which contain no three terms in arithmetical progression''
\textit{Proc. Nat. Acad. Sci. U. S. A.} 32 (1946): 331--332.

\bibitem[\protect\citeauthoryear{Bloom}{2012}]{Bl:2012}
Bloom, T. F.
``Translation invariant equations and the method of Sanders''
\textit{Bull. Lond. Math. Soc.} 44 (2012): 1050--1067.

\bibitem[\protect\citeauthoryear{Bourgain}{1999}]{Bo:1999}
Bourgain, J.
``On triples in arithmetic progression''
\textit{Geom. Funct. Anal.} 9 (1999): 968--984.

\bibitem[\protect\citeauthoryear{Bourgain}{2008}]{Bo:2008}
Bourgain, J.
``Roth's theorem on progressions revisited''
\textit{J. Anal. Math.} 104 (2008): 155--192.

\bibitem[\protect\citeauthoryear{Chang}{2002}]{Ch:2002}
Chang, M.-C.
``A polynomial bound in Freiman's theorem''
\textit{Duke Math. J.} 113 (2002): 399--419.

\bibitem[\protect\citeauthoryear{Croot, \L{aba}, and Sisask}{2013}]{CrLaSi:2013}
Croot, E., \L{aba}, I. and O. Sisask
``Arithmetic progressions in sumsets and $L^p$-almost-periodicity''
\textit{Combin. Probab. Comput.} 22 (2013): 351--365.+

\bibitem[\protect\citeauthoryear{Green}{2002}]{Gr:2002}
Green, B.
``Arithmetic progressions in sumsets''
\textit{Geom. Funct. Anal.} 12 (2002): 584--597.

\bibitem[\protect\citeauthoryear{Heath-Brown}{1987}]{He:1987}
Heath-Brown, D. R.
``Integer sets containing no arithmetic progressions''
\textit{J. London Math. Soc.} 35 (1987): 385--394.

\bibitem[\protect\citeauthoryear{Liu and Spencer}{2009}]{LiSp:2009}
Liu, Y.-R. and C. V. Spencer
``A generalization of Roth's theorem in function fields''
\textit{Int. J. Number Theory} 5 (2009): 1149--1154.

\bibitem[\protect\citeauthoryear{Meshulam}{1995}]{Me:1995}
Meshulam, R.
``On subsets of finite abelian groups with no 3-term arithmetic progressions''
\textit{J. Combin. Theory Ser. A} 71 (1995): 168--172.

\bibitem[\protect\citeauthoryear{Roth}{1953}]{Ro:1953}
Roth, K. F.
``On certain sets of integers''
\textit{J. London Math. Soc.} 28 (1953): 104--109.

\bibitem[\protect\citeauthoryear{Sanders}{2008}]{Sa:2008}
Sanders, T.
``Additive structures in sumsets''
\textit{Math. Proc. Cambridge Philos. Soc.} 144 (2008): 289--316.

\bibitem[\protect\citeauthoryear{Sanders}{2011}]{Sa:2011}
Sanders, T.
``On Roth's theorem on progressions''
\textit{Ann. of Math.} 174 (2011): 619--636.

\bibitem[\protect\citeauthoryear{Sanders}{2012}]{Sa:2012}
Sanders, T.
``On certain other sets of integers''
\textit{J. Anal. Math.} 116 (2012): 53--82.

\bibitem[\protect\citeauthoryear{Shkredov}{2008}]{Sh:2008}
Shkredov, I. D.
``On sets of large trigonometric sums''
\textit{Izv. Math.} 72 (2008): 149--168.

\bibitem[\protect\citeauthoryear{Shkredov}{2009}]{Sh:2009}
Shkredov, I. D.
``On sumsets of dissociated sets''
\textit{Online J. Anal. Comb.} 4 (2009).

\bibitem[\protect\citeauthoryear{Szemer\'{e}di}{1990}]{Sz:1990}
Szemer\'{e}di, E.
``Integer sets containing no arithmetic progressions''
\textit{Acta Math. Hungar.} 56 (1990): 155--158.

\bibitem[\protect\citeauthoryear{Tao and Vu}{2006}]{TaVu:2006}
Tao, T., and V. Vu.
\textit{Additive Combinatorics}, 1st ed.
Cambridge University Press, 2006.

\end{thebibliography}
\end{document}